\newtheorem{theorem}{Theorem}[section]
\newtheorem{proposition}{Proposition}[section]
\newtheorem{corollary}{Corollary}[section]
\newtheorem{remark}{Remark}[section]
\newtheorem{lemma}{Lemma}[section]
\numberwithin{equation}{section}
\def\d{\mathrm{d}}
\def\no{\nonumber}
\def\R{\mathbb{R}}
\def\eps{\varepsilon}
\def\n{\mathbf{n}}
\newcounter{wronumber}\setcounter{wronumber}{1}
\begin{document}
	\title[Allen-Cahn-Navier-Stokes equations]
	{On the initial-boundary value problem of two-phase incompressible flows with variable density in smooth bounded domain}
	
	\author[Ning Jiang]{Ning Jiang}
	\address[Ning Jiang]{\newline School of Mathematics and Statistics, Wuhan University, Wuhan, 430072, P. R. China}
	\email{njiang@whu.edu.cn}

	\author[Yi-Long Luo]{Yi-Long Luo}
	\address[Yi-Long Luo]
	{\newline School of Mathematics, South China University of Technology, Guangzhou, 510641, P. R. China}
	\email{luoylmath@scut.edu.cn}
	
	\author[Di Ma]{Di Ma}
	\address[Di Ma]
	{\newline School of Mathematics and Statistics, Wuhan University, Wuhan, 430072, P. R. China}
	\email{dima\_math@whu.edu.cn}
	
	\thanks{ \today}
	
	\maketitle
	
	\begin{abstract}
        In this work, we study the so-called Allen-Cahn-Navier-Stokes equations, a diffuse-interface model for two-phase incompressible flows with different densities. We first prove the local-in-time existence and uniqueness of classical solutions with finite initial energy over the smooth bounded domain $\Omega$. The key point is to transform the boundary values of the higher order spatial derivatives to that of the higher order time derivatives by employing the well-known Agmon-Douglis-Nireberg theory in \cite{ADN-2}. We then prove global existence near the equilibrium $(0, \pm 1)$ and justify the time exponetial decay $e^{- c_\# t}$ of the global solution. The majority is that the derivative $f' (\phi)$ of the physical relevant energy density $f(\phi)$ will generate an additional damping effect under the perturbation $\phi = \varphi \pm 1$.\\
		
		\noindent\textsc{Keywords.} Incompressible Allen-Cahn-Navier-Stokes system; two-phase flow; global stability; time exponetial decay\\
		
		\noindent\textsc{AMS subject classifications.} 35B45, 35B65, 35Q35, 76D03, 76T99
	\end{abstract}
	
	
	


	\section{Introduction}
	
	\subsection{Two-phase incompressible flows}
 
 In the past two decades, the phase field approach was employed by many researchers in various fluid models \cite{Abels-CMP2009, Abels-M3AS2012, DLL-JMFM2013, Liu-Shen-PD2003, Liu-Shen-Yang-2015-JSC, Shen-Yang-SISC2010, Wu-Xu-CMS2013},  and have carried out extensive analytical and numerical studies on the two-phase flows. For phase field model of incompressible binary fluids with identical densities, or with small density ratios where Boussinesq approximation can be applied in practice, we refer the readers to \cite{Abels-ARMA2009, Boyer-AA1999, GG-AIP2010, GG-DCDS2010, GPV-M3AS1996, Wu-Xu-CMS2013, Xu-Zhao-Liu-SIMA2010} and references therein for detailed derivations and mathematical analysis. However, in most cases, the density differences of the components are not negligible, whence studies on the incompressible two-phase fluids with non-matched densities become even more interesting and challenging.
 
 Recently, in \cite{Jiang-Li-Liu-2017-DCDS}, by employing the {\em energetic variational approach}, a new two-phase incompressible flows with variable density was derived. In fact, before \cite{Jiang-Li-Liu-2017-DCDS},  analytical and numerical results for two-phase incompressible flow models that are valid for large density ratios between different species are quite limited \cite{Abels-AIP2013, Liu-Shen-Yang-2015-JSC, Shen-Yang-SISC2010}. Most studies of the phase-field models for binary fluids have been restricted to the cases with the same density or with small density differences. In the latter case, Boussinesq approximation can be used, where the variable density is replaced by a background constant density value and external gravitational force is added to model the effect of density force \cite{Liu-Shen-PD2003, Wu-Xu-CMS2013}.  In the model derived in \cite{Jiang-Li-Liu-2017-DCDS},  the density ratio between two phases can be quite large and hence the Boussinesq approximation is no more physically valid.

 In this paper, we study the hydrodynamics of a diffuse-interface model describing a mixture of two immiscible incompressible fluids in the smooth bounded domain $\Omega \subseteq \R^3$ with different densities $\rho_1$ and $\rho_2$. A phase field $\phi (t,x)$ is introduced to characterize the two fluids such that
	\begin{center}
		$\phi(t,x)=
		\begin{cases}
			\quad 1 \,,  & \textrm{fluid} \ \ 1 \ \textrm{with density } \rho_1 \,, \\
			\quad -1 \,, &\textrm{fluid} \ \ 2 \ \textrm{with density } \rho_2\,,\\
		\end{cases}$
	\end{center}
	with a thin, smooth transition region. While the two fluids are mixed, $\phi (t,x)$ will ranged be in $(-1,1)$. More precisely, we study the following Allen-Cahn-Navier-Stokes (briefly, ACNS) system:
	\begin{equation}\label{equ-1}
		\left\{
		\begin{array}{l}
			\rho(\phi)( u_t +u\cdot\nabla u)+\nabla p =\nabla\cdot(\mu\nabla u-\lambda \nabla \phi \otimes \nabla \phi) \,,  \\[2mm]
			\qquad \qquad \nabla\cdot u =0 \,,   \\[1mm]
			(\dot{\phi} = )\, \phi_t +u\cdot\nabla \phi =\gamma(\lambda\Delta\phi-\lambda f'(\phi)-\rho'(\phi)\frac{\mid u\mid^{2}}{2}) \,.
		\end{array}
		\right.
	\end{equation}
	Here, $\phi : \mathbb{R}^+ \times  \Omega \rightarrow \mathbb{R} $ is the phase field function that labels different species, $u : \R^+ \times \Omega \rightarrow \R^3$ denotes the velocity of the fluid, and $p : \R^+ \times \Omega \rightarrow \R $ stands for the pressure. $\rho(\cdot)$ is the average density which is a given function of $\phi$. $\mu > 0$ is the viscosity, $\lambda > 0$ represents the competition between the kinetic energy and the free energy, and $\gamma > 0$ comes from microscopic internal damping during the mixing of two immiscible incompressible fluids. $\dot{\phi}$ is the material derivative of $\phi$ with respect to the velocity $u$. The physical relevant energy density functional $ f $ that represent the two phases of the mixture usually has a double-well structure. Without loss of generality, in current paper, we assume that
	\begin{equation}\label{equ-2}
		f(\phi) = \tfrac{1}{4\varepsilon^2}(\phi^2 - 1)^2
	\end{equation}
	with some small parameter $\varepsilon > 0$. For the average density function $\rho (\cdot)$, it is generally assumed that (see \cite{Jiang-Li-Liu-2017-DCDS})
	\begin{equation}\label{Average-rho-1}
		\begin{aligned}
			\rho (\cdot ) \in C^1 (\R)\,, \ \rho (-1) = \rho_1\,, \ \rho(1) = \rho_2 \,, \ \rho (s) \in [ \rho_1 , \rho_2 ] \ \textrm{for } -1 \leq s \leq 1
		\end{aligned}
	\end{equation}
	with $\rho_1 < \rho_2$ being two positive constants and the following exterior convexity
	\begin{equation}\label{Average-rho-2}
		\begin{aligned}
			s \rho' (s) \geq 0 \,,  \ \textrm{for } |s| > 1 \,.
		\end{aligned}
	\end{equation}
	In current paper, we assume that the $\rho (\cdot) \in C^2 (\R)$ is the parabolic average of $\rho_1$ and $\rho_2$ satisfying \eqref{Average-rho-1}-\eqref{Average-rho-2} as follows (see \cite{Liu-Shen-Yang-2015-JSC}):
	\begin{equation}\label{rho-def}
		\begin{aligned}
			\rho (\phi ) = \tfrac{1}{4} \rho_1 (\phi - 1)^2 + \tfrac{1}{4} \rho_2 ( \phi + 1 )^2 \,.
		\end{aligned}
	\end{equation}

    Let $\n = \n (x) \,, x \in \partial \Omega$ be the outnormal vector of the boudnary $\partial \Omega$. We now impose the boundary conditions:
    \begin{equation}\label{BC-ACNS}
    	u|_{\partial\Omega} = 0 \,, \quad \tfrac{\partial}{\partial \n } \phi|_{\partial \Omega}=0,
    \end{equation}
    and the initial data:
    \begin{equation}\label{equ-3}
    	u(0,x)= u^{in}(x) \in \Omega, \quad  \phi(0,x)= \phi^{in}(x) \in \R
    \end{equation}
    with the compatibility condition $\nabla \cdot u^{in}=0$.

    As shown in \cite{Jiang-Li-Liu-2017-DCDS}, once $-1 \leq \phi^{in} (x) \leq 1$ initially in $\Omega$, the Maxmal Principle of the heat equation implies that $-1 \leq \phi(t,x) \leq 1$ for $t \geq 0$ and $x \in \Omega$. We therefore know that the average density $\rho (\phi)$ admits the lower and upper bounds, namely,
	\begin{equation}\label{Low-Bnd-rho-phi}
		\begin{aligned}
			\rho (\phi) = \tfrac{\rho_1 + \rho_2}{4} \Big( \phi + \tfrac{\rho_2 - \rho_1}{\rho_2 + \rho_1} \Big)^2 + \tfrac{\rho_1 \rho_2}{\rho_1 + \rho_2} \in [\tfrac{\rho_1 \rho_2}{\rho_1 + \rho_2}, \tfrac{2\rho_1 \rho_2 + \rho_2^2}{2(\rho_1 + \rho_2)}]
		\end{aligned}
	\end{equation}
	for all $\phi \in [-1, 1]$.

	The system \eqref{equ-1} was derived from employing the {\em energetic variational approach} by Jiang-Li-Liu \cite{Jiang-Li-Liu-2017-DCDS}, in which they considered the total energy
	\begin{equation*}
		\begin{aligned}
			E^{total} : = \int_{\Omega} \Big( \tfrac{1}{2} \rho (\phi) |u|^2 + \lambda ( \tfrac{1}{2} |\nabla \phi|^2 + f (\phi) ) \Big) \d x \,.
		\end{aligned}
	\end{equation*}
	It consists of the first part of the macroscopic kinetic energy and the second part of the Helmholtz free energy. They also took the dissipation of the energy as
	\begin{equation*}
		\begin{aligned}
			\triangle^{dissipative} : = \int_\Omega \Big( \mu |\nabla u|^2 + \tfrac{1}{\gamma} |\dot{\phi}|^2 \Big) \d x \,,
		\end{aligned}
	\end{equation*}
	where the first part accounts for the macroscopic dissipation due to viscosity and the second part comes from microscopic internal damping during the mixing. Finally, the so-called energetic variational approach implies the system \eqref{equ-1}. The details can be seen in \cite{Jiang-Li-Liu-2017-DCDS}.

\subsection{Notations and main results}

In the sequel, we consider the smooth bounded domain $\Omega$ in $\R^3$. We first denote by $L^p \, (1 \leq p \leq \infty)$ by the standard Lebesgue space with norm
\begin{equation*}
	\begin{aligned}
		\| f \|_{L^p} = \Big( \int_{\Omega} |f|^p d x \Big)^\frac{1}{p} \, (1 \leq p < \infty) \,, \quad \| f \|_{L^\infty} = \underset{x \in \Omega}{\mathrm{ess \ sup }} \, |f (x)| \,.
	\end{aligned}
\end{equation*}
For simplicity, we denote by $\| f \| : = \| f \|_{L^2}$. Let $W^{m,p} $ be the standard Sobolev space with norm $\| f \|_{W^{m,p}}^2 = \sum_{|\alpha| \leq m} \| \partial^\alpha f \|^2_{L^p}$.  Here $\alpha = ( \alpha_1, \alpha_2, \alpha_3 ) \in \mathbb{N}^s$ with $|\alpha| = \alpha_1 + \alpha_2 + \alpha_3$ and
\begin{equation*}
	\begin{aligned}
		\partial^m f = \tfrac{\partial^{|\alpha|} f }{\partial x_1^{\alpha_1} \partial x_x^{\alpha_2} \partial x_3^{\alpha_3} } \,.
	\end{aligned}
\end{equation*}
As usual, $W^{m,2} $ simply denotes by $H^m $ with norm $\| f \|_m : = \| f \|_{H^m} $. In particular, $\| f \|_0 = \| f \|$. Moreover, we introduce a weighted $L^2$ space by
\begin{equation*}
	\begin{aligned}
		\| f \|_{ L_{\rho(\phi)}^2 } = \Big( \int_\Omega \rho(\phi) | f |^2 d x \Big)^\frac{1}{2} \,.
	\end{aligned}
\end{equation*}
Remark that the corresponding vector-valued Lebesgue and Sobolev spaces will still be expressed by $L^p$, $W^{m,p}$, $H^m$ and $L_{\rho(\phi)}^2$, etc.

We also employ  $A \lesssim B$ to denote by $A \leq C B$ for some harmless constant $C > 0$. Moreover, $A \thicksim B$ means that $C_1 B \leq A \leq C_2 B$ for two harmless constants $C_1, C_2 > 0$.

We then introduce the following energy functional $\mathbb{E}_j (t)$ and dissipation functional $\mathbb{D}_j (t)$ for $j \geq 0$,
\begin{equation}\label{EjDj-0}
	\begin{aligned}
		\mathbb{E}_j (t)  = &  \| \partial_t^{j} u \|_{L^2_{\rho(\phi)}}^2 + \| \nabla \partial_t^{j} u \|^2 + \| \partial_t^{j} \phi \|^2_2 \,, \\
		\mathbb{D}_j(t)  = & \| \nabla \partial_t^{j} u \|^2_1 + \| \partial_t^j u_t \|_{L^2_{\rho(\phi)}}^2 + \| \nabla \partial_t^{j} \phi \|^2_2 +  \| \partial_t^j \phi_t \|^2_1 + \| \partial_t^j p \|^2_1 \,.
	\end{aligned}
\end{equation}

\begin{theorem}[Local well-posedness]\label{th1}
	Let integer $\Lambda \geq 2$ and $\Omega \subseteq \R^3$ be a smooth bounded domain. Assume that the initial data satisfy $-1 \leq \phi^{in} (x) \leq 1$ in $\Omega$ and
	$$ E^{in}_\Lambda := \| u^{in} \|_{{2 \Lambda + 1}}^2 + \| \phi^{in} \|_{{2 \Lambda + 2}}^2 < \infty \,. $$
	Then there exists a $T>0$, depending only on $E^{in}_\Lambda$, $\Omega$, $\Lambda$ and the all coefficients, such that the ACNS system \eqref{equ-1}-\eqref{equ-2} with boudnary conditions \eqref{BC-ACNS} admits a unique solution $(u, p, \phi) (t,x)$ satisfying
\begin{equation*}
	\begin{aligned}
		& \partial_t^\ell u \in L^\infty (0, T; H^{\Lambda - \ell + 1}) \cap L^2 (0, T; H^2) \,, \partial_t^\ell u_t \in L^2 (0, T; L^2_{\rho (\phi)}) \,, \\
		& \partial_t^\ell p \in L^\infty (0, T; H^{\Lambda - \ell}) \,, \ \partial_t^\ell \phi \in L^\infty (0, T; H^{\Lambda - \ell + 2}) \,, \partial_t^\ell \phi_t \in L^2 (0, T; H^1)
	\end{aligned}
\end{equation*}
for $0 \leq \ell \leq \Lambda$. Moreover, the following energy inequality
\begin{equation}
	\begin{split}
		\sum_{j=0}^\Lambda \mathbb{E}_j (t) + \sum_{j=0}^\Lambda \int_0^t \mathbb{D}_j (t') d t' \leq C
	\end{split}
\end{equation}
and
\begin{equation}\label{Energy-Bnd1}
	\begin{aligned}
		\sum_{\ell + s \leq \Lambda} \Big( \| \partial_t^\ell u \|^2_{s+1} + \| \partial_t^\ell p \|^2_s + \| \partial_t^\ell \phi \|^2_{s+2} \Big) (t) \leq C
	\end{aligned}
\end{equation}
hold for any $t \in [0, T]$ and some constant $C>0$, depending only on $E^{in}_\Lambda$, $T$, $\Omega$ and all the coefficients.
\end{theorem}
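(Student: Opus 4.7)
The plan is to construct solutions through a Picard-type linearization, derive uniform bounds in the purely time-derivative energy $\sum_{j\le\Lambda}\mathbb{E}_j$ --- for which the boundary conditions $u|_{\partial\Omega}=0$ and $\partial_{\mathbf n}\phi|_{\partial\Omega}=0$ are automatically preserved --- and then recover the full spatial regularity asserted in \eqref{Energy-Bnd1} by the Agmon--Douglis--Nirenberg (ADN) elliptic theory. First I would freeze the nonlinear coefficients at the previous iterate, obtaining at each step a linear variable-density Stokes problem for $(u^{n+1},p^{n+1})$ together with a linear convection--diffusion Neumann problem for $\phi^{n+1}$. Existence and uniqueness at each linear step are classical; the real work is to close uniform bounds and pass to the limit.

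For the energy estimates I would differentiate the system $j$ times in $t$ for $0\le j\le\Lambda$, test the momentum equation against $\partial_t^j u$ in $L^2_{\rho(\phi)}$ (and, to capture the dissipation of $\partial_t^j u_t$, against $\partial_t^{j+1}u$), and test the phase equation against $\partial_t^j\phi_t-\Delta\partial_t^j\phi$. Since time differentiation preserves both the Dirichlet and Neumann conditions, integration by parts produces no boundary contributions; the bounds \eqref{Low-Bnd-rho-phi} make $L^2_{\rho(\phi)}$ equivalent to $L^2$. The leading terms generate the dissipations $\mu\|\nabla\partial_t^j u\|^2 + \gamma\lambda\|\Delta\partial_t^j\phi\|^2 + \|\partial_t^j\phi_t\|^2 + \|\partial_t^j u_t\|_{L^2_{\rho(\phi)}}^2$, while the nonlinear commutators coming from $\rho(\phi)\partial_t$, $u\cdot\nabla$, the cubic $f'(\phi)$ and the quadratic source $\rho'(\phi)|u|^2/2$ are handled by H\"older, the embedding $H^1\hookrightarrow L^6$, and the pointwise bound $|\phi|\le 1$ inherited from \cite{Jiang-Li-Liu-2017-DCDS}. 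Summing over $j$ yields a polynomial differential inequality $\tfrac{d}{dt}\sum_{j\le\Lambda}\mathbb{E}_j + \sum_{j\le\Lambda}\mathbb{D}_j \le P\bigl(\sum_{j\le\Lambda}\mathbb{E}_j\bigr)$, producing short-time bounds depending only on $E^{in}_\Lambda$. The initial values $\partial_t^j u|_{t=0}$ and $\partial_t^j\phi|_{t=0}$ required for the estimate are defined recursively from the equations (the pressure at each level being obtained from an elliptic Neumann problem arising from $\nabla\cdot$ of the momentum equation) and are controlled by $E^{in}_\Lambda$ thanks to the hypothesis $u^{in}\in H^{2\Lambda+1}$, $\phi^{in}\in H^{2\Lambda+2}$.

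To convert the time-derivative bounds into the norm appearing in \eqref{Energy-Bnd1}, I would read the equations at each fixed $t$ as stationary elliptic systems: the momentum equation becomes a Stokes problem
\[
-\mu\Delta(\partial_t^j u) + \nabla(\partial_t^j p) = F_j,\quad \nabla\cdot\partial_t^j u = 0,\quad \partial_t^j u|_{\partial\Omega}=0,
\]
and the phase equation becomes $-\gamma\lambda\Delta(\partial_t^j\phi)=G_j$ with $\partial_{\mathbf n}(\partial_t^j\phi)|_{\partial\Omega}=0$. The ADN estimates from \cite{ADN-2} yield $\|\partial_t^j u\|_{s+2}+\|\partial_t^j p\|_{s+1}\lesssim \|F_j\|_s$ and $\|\partial_t^j\phi\|_{s+2}\lesssim \|G_j\|_s + \|\partial_t^j\phi\|$. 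I would then run an induction in the pair $(j,s)$, starting at $j=\Lambda$, $s=0$ and lowering $j$ by one while raising $s$ by one, so that every factor on the right of an ADN estimate has already been bounded in a previous sub-step. Passage to the limit in $n$ follows from weak-$*$ compactness together with Aubin--Lions (using the $\partial_t$ control built into $\mathbb{D}_j$), and uniqueness is a standard energy estimate on the difference of two solutions, exploiting the lower bound on $\rho(\phi)$ and the Lipschitz regularity of $\rho,\rho',f'$ on $[-1,1]$.

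The main obstacle is the tight coupling between the ADN bootstrap and the recursive construction of the initial-time derivatives: the source terms $F_j,G_j$ contain the pressure $\partial_t^j p$ together with spatial gradients of lower-order quantities, so the order of the induction --- both in $j$ and in $s$ --- must be arranged with care so that every quantity appearing on the right has already been estimated. This is precisely the technical role of the ADN theory advertised in the abstract: it is what lets us trade the inaccessible tangential and normal spatial derivatives at the boundary for time derivatives that trivially respect the Dirichlet and Neumann conditions.
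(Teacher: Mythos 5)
Your proposal follows essentially the same route as the paper: an iteration with coefficients frozen at the previous step, closed energy estimates for the pure time derivatives $\partial_t^j$ (whose Dirichlet/Neumann conditions are inherited from \eqref{BC-ACNS}), recovery of the mixed space--time norms in \eqref{Energy-Bnd1} via the ADN estimates of Lemma \ref{Lmm-Stokes} and Lemma \ref{Lmm-Laplace} through an induction trading time derivatives for spatial ones, and finally compactness plus a difference energy estimate for uniqueness. The only cosmetic difference is that the paper interleaves the ADN step inside the $H^2$ energy lemmas (adding $\kappa\|\Delta\partial_t^j u\|^2+\kappa\|\nabla\Delta\partial_t^j\phi\|^2+\kappa\|\partial_t^j p\|^2_1$ to the dissipation so that the differential inequality closes), whereas you place the elliptic bootstrap after the energy step; both amount to the same argument.
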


\begin{remark}
	The equations \eqref{equ-1} and the energy bound \eqref{Energy-Bnd1} indicate that $\partial_t^\ell u \thicksim \Delta^\ell u$ and $\partial_t^\ell \varphi \thicksim \Delta^\ell \varphi$, which implies that when $\ell = \Lambda$, there is a constant $C' > 0$ such that
	\begin{equation}
		\begin{aligned}
			\sup_{t \in [0, T]} \big( \| u \|^2_{2 \Lambda + 1} + \| \varphi \|^2_{2 \Lambda + 2} \big) (t) \leq C' \,.
		\end{aligned}
	\end{equation}
\end{remark}

The next theorem is to prove the global well-posedness of \eqref{equ-1}-\eqref{equ-3} near the equilibrium $(0, \pm 1)$. More precisely, let $\phi (t,x) = \pm 1 + \varphi (t,x)$. We then know that $(u, p, \varphi)$ satisfies
\begin{equation}\label{g-equ-1}
	\left\{
	\begin{array}{l}
		\varrho (\varphi) ( u_t + u \cdot \nabla u) + \nabla p
		= \nabla \cdot ( \mu \nabla u - \lambda \nabla \varphi \otimes \nabla \varphi ) \,,\\[2mm]
		\qquad \qquad \nabla \cdot u =0 \,,\\
		\varphi_t + u \cdot \nabla \varphi + \tfrac{2 \gamma \lambda}{\eps^2} \varphi = \gamma ( \lambda \Delta \varphi - \lambda h ( \varphi ) - \varrho' ( \varphi ) \frac{|u|^2}{2})
	\end{array}
	\right.
\end{equation}
with the boundary conditions
\begin{equation}\label{g-equ-3}
	u|_{\partial\Omega}=0, \quad \tfrac{\partial}{\partial \n} \varphi|_{\partial\Omega}=0,
\end{equation}
and initial data
\begin{equation}\label{g-equ-2}
	\begin{aligned}
		u (0,x) = u^{in} (x) \in \R^3 \,, \quad \varphi (0,x) = \varphi^{in} (x) : = \phi^{in} (x) \pm 1 \in \R \,,
	\end{aligned}
\end{equation}
which satisfies the compatibility condition $\nabla \cdot u^{in}=0$. Here $h (\varphi) = \tfrac{1}{\eps^2} (\varphi^3 \pm 3 \varphi^2)$ and
\begin{equation}
	\begin{aligned}
		\varrho (\varphi) : = \rho ( \varphi \pm 1 ) =
		\left\{
		\begin{array}{l}
			\tfrac{\rho_1}{4} \varphi^2 + \tfrac{\rho_2}{4} ( \varphi + 2 )^2 \quad \textrm{for  } \varphi + 1 \,, \\[2mm]
			\tfrac{\rho_1}{4} ( \varphi - 2 )^2 + \tfrac{\rho_2}{4} \varphi^2 \quad \textrm{for  } \varphi - 1 \,.
		\end{array}
		\right.
	\end{aligned}
\end{equation}
We now introuce the global energy functionals and dissipations as follows: for $j \geq 0$,
\begin{equation}\label{GED-j-order}
	\begin{split}
		\mathds{E}_j(t) = & \| \partial_t^{j} u \|_{L^2_{\varrho(\varphi)}}^2 + \| \nabla \partial_t^{j} u \|^2 + \| \partial_t^{j} \varphi \|^2_2 \,, \\
		\mathds{D}_j (t) = & \| \nabla \partial_t^{j} u \|^2_1 + \| \partial_t^j u_t \|_{L^2_{\varrho(\varphi)}}^2 + \| \partial_t^{j} \varphi \|^2_3 + \| \partial_t^j \varphi_t \|^2_1 + \| \partial_t^j p \|^2_1 \,.
	\end{split}
\end{equation}

\begin{theorem} [Global stability near $(0, \pm 1)$]\label{th2}
		Let integer $\Lambda \geq 2$. Assume that $u^{in} \in H^{2 \Lambda + 1}$ and $\varphi^{in} \in H^{2 \Lambda + 2}$. Then there is a small positive constant $\upsilon_0$, depending only on $\Lambda$, $\Omega$ and the all coefficients, such that if the initial energy
	\begin{equation*}
		\begin{split}
			\mathcal{E}^{in}_\Lambda : = \| u^{in} \|_{2 \Lambda + 1}^2 + \| \nabla \varphi^{in} \|_{2 \Lambda + 2}^2 \leq \upsilon_0 \,,
		\end{split}
	\end{equation*}
	then \eqref{g-equ-1}-\eqref{g-equ-3} with initial data \eqref{g-equ-2} admits a unique global in time solution $(u, p, \varphi)$ satisfying
	\begin{equation*}
		\begin{aligned}
			& \partial_t^\ell u \in L^\infty (0, \infty; H^{\Lambda - \ell + 1}) \cap L^2 (0, \infty; H^2) \,, \partial_t^\ell u_t \in L^2 (0, \infty; L^2_{\rho (\phi)}) \,, \\
			& \partial_t^\ell p \in L^\infty (0, \infty; H^{\Lambda - \ell}) \,, \ \partial_t^\ell \phi \in L^\infty (0, \infty; H^{\Lambda - \ell + 2}) \,, \partial_t^\ell \phi_t \in L^2 (0, \infty; H^1)
		\end{aligned}
	\end{equation*}
	for $0 \leq \ell \leq \Lambda$.
	Furthermore, there hold
	\begin{equation}
		\begin{aligned}
			\sum_{0 \leq k \leq \Lambda} \mathds{E}_k (t) \leq c_0 \mathcal{E}_\Lambda^{in} e^{- c_\# t} \,, \quad \sum_{0 \leq k \leq \Lambda} \int_{0}^t \mathds{D}_k (t') \d t' \leq c_0 \mathcal{E}_\Lambda^{in} \,,
		\end{aligned}
	\end{equation}
    and
    \begin{equation}\label{Energy-Bnd-2}
    	\sum_{\ell + s \leq \Lambda} \Big( \| \partial_t^\ell u \|^2_{s+1} + \| \partial_t^\ell p \|^2_s + \| \partial_t^\ell \varphi \|^2_{s+2} \Big) (t) \leq c_1 \mathcal{E}_\Lambda^{in} e^{- c_\# t}
    \end{equation}
    for all $t \geq 0$ and for some positive constants $c_0, c_1, c_\# > 0$, depending only on $\Lambda$, $\Omega$ and the all coefficients.
\end{theorem}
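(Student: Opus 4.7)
The plan is to promote the local solution from Theorem \ref{th1} to a global one by a continuation argument, with the essential new ingredient being the linear damping $\tfrac{2\gamma\lambda}{\eps^2}\varphi$ that appears on the left-hand side of the third equation in \eqref{g-equ-1}. This term is the linearization of $\lambda f'(\phi)$ about the equilibria $\phi=\pm 1$, since $f''(\pm 1)=2/\eps^2>0$, and it provides $L^2$-damping on $\varphi$ that compensates for the absence of a Poincar\'e inequality (the Neumann boundary condition $\partial_{\n}\varphi|_{\partial\Omega}=0$ does not prevent $\varphi$ from having nonzero mean). Combined with the viscous dissipation on $u$ (for which Poincar\'e applies thanks to $u|_{\partial\Omega}=0$), this will deliver coercivity of $\mathds{D}_j$ over $\mathds{E}_j$ at every order.

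The technical core is a family of time-differentiated energy estimates. For each $0\le j\le\Lambda$, I would apply $\partial_t^j$ to \eqref{g-equ-1} and test the momentum equation with $\partial_t^j u$ (giving $\tfrac{d}{dt}\|\partial_t^j u\|_{L^2_{\varrho(\varphi)}}^2$ and viscous dissipation $\mu\|\nabla\partial_t^j u\|^2$), and with $\partial_t^{j+1}u$ (giving $\tfrac{d}{dt}\|\nabla\partial_t^j u\|^2$ and $\|\partial_t^j u_t\|^2_{L^2_{\varrho(\varphi)}}$); then test the $\partial_t^j$-phase equation successively with $\partial_t^j\varphi$, $-\Delta\partial_t^j\varphi$, and an $H^3$-level multiplier compatible with the Neumann condition, which together build up $\tfrac{d}{dt}\|\partial_t^j\varphi\|_2^2$, the linear damping $\tfrac{2\gamma\lambda}{\eps^2}\|\partial_t^j\varphi\|^2_2$, and the full diffusive dissipation $\|\partial_t^j\varphi\|_3^2$. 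The pressure piece $\|\partial_t^j p\|_1^2$ is recovered by viewing $(\partial_t^j u,\partial_t^j p)$ as solving a Stokes system and invoking the Agmon-Douglis-Nirenberg estimate already used for Theorem \ref{th1}. The nonlinear contributions, namely $u\!\cdot\!\nabla u$, $\nabla\cdot(\nabla\varphi\otimes\nabla\varphi)$, the cubic-quadratic $h(\varphi)$, the composition $\varrho(\varphi)$, and the cross term $\varrho'(\varphi)|u|^2$, are all at least quadratic in $(u,\varphi)$; using $H^2(\Omega)\hookrightarrow L^\infty(\Omega)$ and the Moser-type product rule, each nonlinear contribution can be bounded by $C\bigl(\sum_k \mathds{E}_k\bigr)^{1/2}\sum_k \mathds{D}_k$. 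Summing over $j$ produces
\begin{equation*}
\tfrac{d}{dt}\sum_{j=0}^{\Lambda}\mathds{E}_j+c\sum_{j=0}^{\Lambda}\mathds{D}_j\le C\Bigl(\sum_{k=0}^{\Lambda}\mathds{E}_k\Bigr)^{\!1/2}\sum_{k=0}^{\Lambda}\mathds{D}_k.
\end{equation*}
Under the bootstrap hypothesis $\sum_k \mathds{E}_k(t)\le\upsilon_0$ with $\upsilon_0$ small, the right-hand side is absorbed into the left, and the coercivity $\mathds{E}_j\lesssim\mathds{D}_j$ (via Poincar\'e on $\partial_t^j u$ plus the trivial $\|\partial_t^j\varphi\|_2^2\le\|\partial_t^j\varphi\|_3^2$) upgrades the resulting inequality to $\tfrac{d}{dt}\mathcal{E}+c_\#\mathcal{E}\le 0$ for $\mathcal{E}:=\sum_j\mathds{E}_j$, which yields $\mathcal{E}(t)\le\mathcal{E}(0)e^{-c_\# t}$ and, upon integration, the claimed bound on $\int_0^t\sum_j\mathds{D}_j$. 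Since the initial temporal derivatives can be expressed through the equations in terms of spatial derivatives of $(u^{in},\varphi^{in})$, one has $\mathcal{E}(0)\lesssim\mathcal{E}_\Lambda^{in}$, so choosing $\upsilon_0$ so that $c_0\mathcal{E}_\Lambda^{in}\le\upsilon_0$ closes the continuation. The mixed-norm bound \eqref{Energy-Bnd-2} is then obtained by re-reading the momentum equation as a steady Stokes system and the phase-field equation as a Neumann Laplace problem, iteratively trading time derivatives for spatial derivatives via ADN elliptic regularity exactly as in Theorem \ref{th1}, so the exponential decay of $\mathcal{E}(t)$ propagates to every $(\ell,s)$ with $\ell+s\le\Lambda$.

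The main obstacle I expect is the top-order coupling: the capillary term $\nabla\cdot(\nabla\varphi\otimes\nabla\varphi)$ injects derivatives of $\varphi$ into the momentum equation at an order comparable with the velocity dissipation, while the variable-density term $\varrho(\varphi)\partial_t u$, upon differentiation in $t$, produces cross terms $\varrho'(\varphi)\partial_t^{j_1}\varphi\,\partial_t^{j_2+1}u$ that must be kept integrable in time. Handling both requires carefully matching the multiplier hierarchy so that every such cross term collects a small factor from $(\sum_k\mathds{E}_k)^{1/2}$; a secondary technical point is ensuring that the $H^3$-level multiplier for $\varphi$ is compatible with the Neumann boundary datum $\partial_\n\varphi=0$ at all time levels, which follows by differentiating the boundary condition in $t$ and using that $\partial_\n\partial_t^j\varphi|_{\partial\Omega}=0$ for all $j$.
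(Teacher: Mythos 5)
Your proposal is correct and follows essentially the same route as the paper: time-differentiated energy estimates exploiting the damping $\tfrac{2\gamma\lambda}{\eps^2}\varphi$ coming from $f''(\pm 1)>0$, Poincar\'e on $\partial_t^j u$ from the Dirichlet condition, ADN/Stokes regularity to recover $\|\partial_t^j p\|_1$, $\|\Delta\partial_t^j u\|$ and the $H^3$-level control of $\partial_t^j\varphi$, a smallness bootstrap giving $\tfrac{d}{dt}\mathcal{E}+c_\#\mathcal{E}\le 0$, and a final ADN iteration converting time derivatives into the mixed bound \eqref{Energy-Bnd-2}. The only differences are implementational: the paper obtains the top spatial dissipation $\|\nabla\Delta\partial_t^j\varphi\|^2$ and $\|\Delta\partial_t^j u\|^2$ from the elliptic equations themselves (added with a small factor $\kappa_g$) rather than an extra $H^3$-level multiplier, and it absorbs the non-small lower-order dissipation cross terms by working with weighted sums $\mathfrak{E}_\Lambda=\sum_k\chi_k\mathcal{E}_k$, $\mathfrak{D}_\Lambda=\sum_k\vartheta_k\mathcal{D}_k$ instead of your unweighted sum with a uniform small prefactor.
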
	

\begin{remark}
	Together with the equations \eqref{g-equ-1}, the energy bound \eqref{Energy-Bnd-2} guarantees that $\partial_t^\ell u \thicksim \Delta^\ell u$ and $\partial_t^\ell \varphi \thicksim \Delta^\ell \varphi$. It thereby infers that for $\ell = \Lambda$,
	\begin{equation}
		\begin{aligned}
			\big( \| u \|^2_{2 \Lambda + 1} + \| \varphi \|^2_{2 \Lambda + 2} \big) (t) \leq c_2 \mathcal{E}_\Lambda^{in} e^{- c_\# t}
		\end{aligned}
	\end{equation}
	for all $t \geq 0$ and for some positive constants $c_0, c_2, c_\# > 0$.
\end{remark}

\subsection{Main ideas and sketch of the proofs}	

The first goal of this paper is to prove the local well-posedness of the initial-boundary value problem \eqref{equ-1}-\eqref{equ-3} over the smooth bounded domain $\Omega$ in the functions spaces that the spatial variables with regularity $H^s$ for large index $s$, i.e., Theorem \ref{th1}. In many known literatures that considered the well-posedness of various models, only the period domain or whole space was considered in the spaces with spatial regularity $H^s$ for large $s$. Oppositely, if the bounded domains were focused, the aims were to prove the existence of weak solutions or strong solutions (in $H^2$ space, for example), in which cases only the information of boundary values was required, rather than that of higher order spartial derivatives of the boundary values.

In the smooth bounded domain $\Omega$, if one investigates the well-posedness of the ACNS model in the functions spaces with $H^s$-regularity of the spatial variables, the {\em main difficulties} come from dealing with the boundary values of the higher order spatial derivatives. Generally speaking, although the boundary value of a function is finite, the higher order derivatives may be infinite. For example, the function $f (x) = \sqrt{x}$ for $x \geq 0$ is continuous up to the boundary $x = 0$ but $f' (0) = \infty$. Note that it is impossible to control the boundary values of higher order spatial derivatives only employing the usual $H^s$-theory over the period domain or whole space. For instance,
\begin{equation*}
	\begin{aligned}
		- \int_\Omega \Delta \partial^m u \cdot \partial^m u d x = - \int_{\partial \Omega} \tfrac{\partial}{\partial \n} \partial^m u \cdot \partial^m u d S + \int_\Omega |\nabla \partial^m u|^2 d x \,,
	\end{aligned}
\end{equation*}
the boundary integral $\int_{\partial \Omega} \tfrac{\partial}{\partial \n} \partial^m u \cdot \partial^m u d S$ cannot be controlled by the ``good" quantity $\int_\Omega |\nabla \partial^m u|^2 d x$ combining with the Trace Theorem.

In order to overcome the difficulty, we employ the Agmon-Douglis-Nirenberg (briefly, ADN) theory associated with the general elliptic system in \cite{ADN-2}, which was reviewed in Section \ref{Sec:ADN} below. The {\em main ideas} are as follows. The ACNS system \eqref{equ-1} with boundary conditions \eqref{BC-ACNS} can be rewritten as the abstract forms
\begin{equation*}
	\left\{
	    \begin{aligned}
	    	- \mu \Delta u + \nabla p = \mathcal{U} (u_t, u, \phi) \,, \quad \nabla \cdot u = 0 \,, \\
	    	\Delta \phi = \Theta (\phi_t, u, \phi) \,, \\
	    	u |_{\partial \Omega} = 0 \,, \quad \tfrac{\partial \phi}{\partial \n} |_{\partial \Omega} = 0 \,.
	    \end{aligned}
	\right.
\end{equation*}
By the ADN theory, $\| (u, \phi) \|_{s+2}$ can be bounded by $\| (u, \phi)_t \|_s$, namely, the second order spatial derivatives of $(u, \phi)$ can be transformed to the first order time derivatives of $(u, \phi)$. We therefore mutate the higher order spatial derivatives problem to the higher order time derivatives problem.

The {\em key point} to dominate the higher order time derivatives is that the boundary condition \eqref{BC-ACNS}, i.e., $u |_{\partial \Omega} = 0$ and $\tfrac{\partial \phi}{\partial \n} |_{\partial \Omega} = 0$ can imply the boundary conditions \eqref{BC-HighTimeD} of the higher order time derivatives, i.e., $\partial_t^k u |_{\partial \Omega} = 0$ and $\tfrac{\partial }{\partial \n} \partial_t^k \phi |_{\partial \Omega} = 0$ for any $k \geq 0$. The sketch of the proofs as follows.
\begin{enumerate}
	\item In Section \ref{Sec:ADN} below, we first review the general ADN theory, and reduce the special forms of estimates in Lemma \ref{Lmm-Stokes} and Lemma \ref{Lmm-Laplace} required in this paper.
	
	\item In Subsection \ref{Subsec:3_2}, we derive the closed $H^2$-estimates for the ACNS system \eqref{equ-1}. Here the boundary conditions $u |_{\partial \Omega} = 0$ and $\tfrac{\partial \phi}{\partial \n} |_{\partial \Omega} = 0$ in \eqref{BC-ACNS} are required.
	
	\item In Subsection \ref{Subsec:3_3}, we derive the closed $H^2$-estimates for the ACNS system \eqref{equ-1} after acting the higher order time derivatives operator $\partial_t^k$ for $k \geq 1$. Here the boundary conditions $\partial_t^k u |_{\partial \Omega} = 0$ and $\tfrac{\partial}{\partial \n} \partial_t^k \phi |_{\partial \Omega} = 0$ in \eqref{BC-HighTimeD} below are required.
	
	\item Based on closed $H^2$-bounds of the various orders of the time derivatives in Step (2) and (3), we apply the ADN theory in Lemma \ref{Lmm-Stokes} and Lemma \ref{Lmm-Laplace} to dominate the higher order time-spatial mixed derivatives, see Lemma \ref{Lmm-HSDE}. Here the core is to control the quantities $\bm{\Phi}_{\ell, s}$ and $\mathbf{U}_{\ell, s}$ as in Corollary \ref{Coro-UPhi-bnd}.
\end{enumerate}

The second goal of this paper is to investigate the global stability and long time decay of the ACNS system \eqref{equ-1} near the equilibrium $(0, \pm 1)$, hence, Theorem \ref{th2}. In this case, under the purturbation $\phi (t,x) = \varphi (t,x) \pm 1$, we reduce the equivalent system \eqref{g-equ-1}. The way to deal with the information of the boundary values is totally the same as that in proving the locall well-posedness to \eqref{equ-1}. The key ingredients to verify the global stability is to seek a new dissipation or damping structure on the unknown $\varphi (t,x)$. Fortunately, by the perturbation $\phi = \varphi \pm 1$, the derivative $f' (\phi) = f' (\varphi \pm 1)$ of the physical relevant energy density $f (\phi)$ in the $\phi$-equation of \eqref{equ-1} will generate an additional damping effective $\tfrac{2}{\eps^2} \varphi$, which gaurantees us to prove the global existence of the ACNS system near the equilibrium $(0, \pm 1)$.

On the other hand, due to the boundary conditions $\partial_t^j u |_{\partial \Omega} = 0$ for $j \geq 0$, there holds the Poincar\'e inequality $\| \partial_t^j u \| \lesssim \| \nabla \partial_t^j u \|$. We thereby imply that
\begin{equation*}
	\begin{aligned}
		\mathds{E}_j (t) \lesssim \mathds{D}_j (t) \quad (\forall \, j \geq 0) \,,
	\end{aligned}
\end{equation*}
where $\mathds{E}_j (t)$ and $\mathds{D}_j (t)$ are defined in \eqref{GED-j-order}. Then by the arguments in the end of Section \ref{Sec:Global}, we can justify the global solution near $(0, \pm 1)$ admits the exponetial decay $e^{- c_\# t}$ for some constant $c_\# > 0$.

\subsection{Organization of this paper}	

In the next section, we give some preliminaries, in particular review the general ADN theory. In Section \ref{Sec:Apriori}, we derive three types of the a priori estimates of the system \eqref{equ-1}: 1) $H^2$-estimates; 2) $H^2$-estimates of the higher order time derivatives; 3) The estimates for the higher order time-space mixed derivatives. In Section \ref{Sec:Local}, based on the a priori estimates, we prove the local well-posedness by employing the iteration methods. In Section \ref{Sec:Global}, we prove the global classical solution for the system \eqref{g-equ-1}-\eqref{equ-3} near the equilibrium $(0, \pm 1)$. Moreover, the time exponetial decay $e^{- c_\# t}$ is also obtained. In Appendix \ref{Sec:Appendix}, we give the proof of Lemma \ref{Lmm-U-Phi}.

\section{Preliminaries}\label{Sec:ADN}

\subsection{Agmon-Douglis-Nirenberg theory}

In order to deal with the high order spatial derivatives of the solutions $(u, \phi)$, we shall employ the well-known Agmon-Douglis-Nirenberg (briefly, ADN) theory \cite{ADN-2}. For convenience of readers, we sketch the theory here. More precisely, they studied the general linear elliptic system on the bounded smooth domain $\Omega \subseteq \R^n$ with the following forms:
\begin{equation}\label{ES}
	\left\{
	\begin{aligned}
		\sum_{j=1}^N l_{ij} (x, \partial) u_j (x) = F_i (x) \ (i = 1, \cdots , N) \textrm{ on } \Omega \,, \\
		\sum_{j=1}^N B_{hj} (x, \partial) u_j (x) = \phi_h (x) \ (h = 1,\cdots,m) \textrm{ on } \partial \Omega \,,
	\end{aligned}
	\right.
\end{equation}
where the $l_{ij} (x, \partial)$, linear differential operators, are polynomials in $\partial$ with coefficients depending on $x \in \Omega$. The orders of these operators are be assumed to depend on two groups of integer weights, $s_1,\cdots, s_N$ and $t_1, \cdots, t_N$, attached to the equations and to the unknowns, respectively, $s_i$ corresponding to the $i$-th equation and $t_j$ to the $j$-th dependent unknown $u_j$. The manner of the dependence is represented by the inequality
\begin{equation}
	\begin{aligned}
		\deg l_{ij} (x, \Xi) \leq s_i + t_j \,, \ i,j = 1, \cdots, N \,,
	\end{aligned}
\end{equation}
where ``$\deg$" refers of course to the degree in $\Xi$, and $s_i \leq 0$. Moreover, $l_{ij}=0$ if $s_i + t_j < 0$. The ellipticity of \eqref{ES} is characterized by
\begin{equation}
	\begin{aligned}
		L (x,\Xi) : = \det (l_{ij}' (x, \Xi)) \neq 0 \quad \textrm{ for real } \Xi \neq 0 \,,
	\end{aligned}
\end{equation}
where $l_{ij}' (x, \Xi)$ consists of the terms in $l_{ij} (x, \Xi)$ which are just of the order $s_i + t_j$. Furthermore, the following {\bf supplementary condition on $L$} should be imposed:
\begin{enumerate}
	\item[\bf (SC)] {\em $L(x, \Xi)$ is of even degree $2m$ (with respect to $\Xi$). For every pair of linearly independent real vectors $\Xi, \Xi'$, the polynomial $L(x, \Xi + \tau \Xi')$ in the complex variable $\tau$ has exactly $m$ roots $\tau_k^+ (x, \Xi)$ ($k = 1, \cdots, m$) with positive imaginary part, i.e., $\mathrm{Im} \tau_k^+ (x, \Xi) > 0$.}
\end{enumerate}
{\em Uniform ellipticity} will be required in the sense that there is a positive constant $A$ such that
\begin{equation}
	\begin{aligned}
		A^{-1} |\Xi|^{2m} \leq |L(x, \Xi)| \leq A |\Xi|^{2m}
	\end{aligned}
\end{equation}
for every real vector $\Xi = (\xi_1, \cdots, \xi_n)$ and for every point $x$ in the closure of the domain $\Sigma$, where $m = \tfrac{1}{2} \deg ( L(x, \Xi) ) > 0$.

In the boundary value of \eqref{ES}, $m$ is exactly $\tfrac{1}{2} \deg ( L(x, \Xi) )$. The linear boundary operator $B_{hj} (x,\partial)$ are of complex coefficients depending on $x$. The orders of $B_{hj} (x,\partial)$, like those of the operators $l_{ij} (x, \partial)$, depend on two groups of integer weights, in this case the group $t_1, \cdots, t_N$ already attached to the dependent unknowns and a new group $r_1, \cdots, r_m$ of which $r_h$ pertains to the $h$-th boundary condition, $h = 1,\cdots,m$. The exact dependence is expressed by the inequality
\begin{equation}
	\begin{aligned}
		\deg B_{hj} (x,\Xi) \leq r_h + t_j \,,
	\end{aligned}
\end{equation}
and $B_{hj} = 0$ when $r_h + t_j < 0$. Moreover, the following {\bf complementing boundary condition} on the boundary operator should also be imposed:
\begin{enumerate}
	\item[\bf (CBC)] {\em For any $x \in \partial \Omega$ and any real, non-zero vector $\Xi$ tangent to $\partial \Omega$ at $x$, let us regard $M^+ (x,\Xi, \tau) = \prod_{h=1}^m (\tau - \tau_h^+ (x, \Xi))$ and the elements of the matrix
		\begin{equation}
			\begin{aligned}
				\sum_{j=1}^N B_{hj}' (x, \Xi + \tau \mathbf{n}) L^{jk} (x, \Xi + \tau \mathbf{n})
			\end{aligned}
		\end{equation}
		as polynomial in the indeterminate $\tau$. The rows of the latter matrix are required to be linearly independent modulo $M^+ (x, \Xi, \tau)$, i.e.,
		\begin{equation*}
			\begin{aligned}
				\sum_{h=1}^m C_h \sum_{j=1}^N B'_{hj} L^{jk} \equiv 0 \ (\mathrm{mod} M^+) \,,
			\end{aligned}
		\end{equation*}
		only if the constant $C_h$ are all zero. Here $\mathbf{n}$ is the normal to $\partial \Omega$ at $x$, $B'_{hj} (x, \Xi)$ consists of the terms in $B_{hj} (x, \Xi)$ which are just of the order $r_h + t_j$, and $(L^jk (x, \Xi + \tau \mathbf{n}))$ denotes the matrix adjoint to $(l'_{ij} (x, \Xi + \tau \mathbf{n}))$.}
\end{enumerate}

Specifically, the operators $l_{ij} (x,\partial)$ and $B_{hj} (x,\partial)$ are of the forms
\begin{equation}
	\begin{aligned}
		l_{ij} (x, \partial) = \sum_{|\alpha| = 0}^{s_i + t_j} a_{ij, \alpha} (x) \partial^\alpha \,, \quad B_{hj} (x, \partial) = \sum_{|\beta| = 0}^{r_h + t_j} b_{hj, \beta} (x) \partial^\beta \,,
	\end{aligned}
\end{equation}
where $\alpha$ and $\beta$ denote multi-indices indicative of the precise differentiation involved. Then the following results hold.

\begin{proposition}[Theorem 10.5 of ADN \cite{ADN-2}]\label{ADN-theory}
	Let $\Omega \subseteq \R^n$ be an open bounded domain of class $C^l$, $l \geq l_1 : = \max (0, r_h + 1)$ and $p > 1$. Assume that $a_{ij, \alpha} \in C^{l - s_i} (\bar{\Omega})$, $b_{hj, \beta} \in C^{l - r_h} (\partial \Omega)$, $F_i \in W^{l-s_i,p} (\Omega)$ and $\phi_h \in W^{l-r_h - \frac{1}{p}, p} (\partial \Omega)$\footnote{$W^{l- r_h - \frac{1}{p}, p} (\partial \Omega) = \gamma_0 W^{l-r_h, p} (\Omega)$ and is equipped with the image norm $$ \| \psi \|_{W^{l-r_h - \frac{1}{p}, p} (\partial \Omega)} = \inf_{\gamma_0 v = \psi} \| v \|_{W^{l-r_h, p} (\Omega)} \,, $$ where $\gamma_0$ is the trace operator on $\partial \Omega$.}. A constant $K$ exists such that, if $\| u_j \|_{W^{l_1 + t_j, p} (\Omega)}$ is finite for $j = 1,\cdots, N$, then $\| u_j \|_{W^{l + t_j, p} (\Omega)}$ also is finite, and
	\begin{equation}
		\begin{aligned}
			\| u_j \|_{W^{l + t_j, p} (\Omega)} \leq K \Big( \sum_{i=1}^N \| F_i \|_{W^{l-s_i, p} (\Omega)} + \sum_{h=1}^m \| \phi_h \|_{W^{l-r_h - \frac{1}{p}, p} (\partial \Omega)} + \sum_{j=1}^N \| u_j \|_{L^p (\Omega)} \Big) \,.
		\end{aligned}
	\end{equation}
	$K > 0$ is dependent on $p$, $l$, $t_i$, $s_j$, $r_h$, $\Omega$, $a_{ij, \alpha}$ and $b_{hj, \beta}$.
\end{proposition}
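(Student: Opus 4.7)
The plan is to reduce the general variable-coefficient elliptic boundary-value problem on $\Omega$ to a family of constant-coefficient model problems on the half-space $\R^n_+$ via localization, freezing of coefficients, and a partition of unity. The two basic model estimates to establish are an interior estimate on $\R^n$ and a boundary estimate on $\R^n_+$. For the interior model, fixing a point $x_0$, the symbol matrix $l'_{ij}(x_0,\Xi)$ is invertible for $\Xi \neq 0$ by uniform ellipticity, and one obtains a Douglis-Nirenberg parametrix whose $L^p$ boundedness on the top-order part follows from the Mihlin-H\"ormander multiplier theorem. For the boundary model, after tangential Fourier transform the constant-coefficient half-space problem becomes an ODE system in the normal variable; the supplementary condition (SC) yields exactly $m$ linearly independent decaying solutions, and the complementing boundary condition (CBC) makes the matching matrix built from $B'_{hj}$ invertible modulo $M^+(x,\Xi,\tau)$, so the $m$ free constants are uniquely determined by the $\phi_h$. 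Explicit Poisson and volume kernels then furnish the model estimate $\sum_j \|u_j\|_{W^{l+t_j,p}(\R^n_+)} \lesssim \sum_i \|F_i\|_{W^{l-s_i,p}(\R^n_+)} + \sum_h \|\phi_h\|_{W^{l-r_h-1/p,p}(\R^{n-1})} + \sum_j \|u_j\|_{L^p(\R^n_+)}$.

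Next I would patch these into a global estimate on $\Omega$. Cover $\overline{\Omega}$ by finitely many coordinate charts, flattening $\partial \Omega$ via $C^l$ diffeomorphisms (the regularity hypothesis $l \geq \max(0, r_h + 1)$ is exactly what keeps the conjugated coefficients in the required Sobolev classes), introduce a subordinate partition of unity $\{\chi_k\}$, and on each chart apply either the interior or boundary model estimate to $\chi_k u_j$. The commutator terms $[l_{ij},\chi_k]u_j$ and the freezing error $(l_{ij}(x,\partial) - l_{ij}(x_0,\partial))(\chi_k u_j)$ are of strictly lower order or carry arbitrarily small top-order coefficient norms on a fine enough cover, so they are absorbed either on the left-hand side of the model inequality or into the lower-order $\|u_j\|_{L^p}$ remainder on the right. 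Summing over $k$ gives the global estimate with the stated constant $K$, and one iterates upward on $l$ (using the baseline regularity $\|u_j\|_{W^{l_1 + t_j, p}} < \infty$) to reach the full scale of regularity.

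The main obstacle, and the reason ADN's original argument is long, is the half-space model estimate under the Douglis-Nirenberg weighting: one must verify that the Poisson-type operators built from the symbolic adjoint $L^{jk}(x,\Xi+\tau\n)$ and the boundary matrix $B'_{hj}$ give rise to Calder\'on-Zygmund kernels of the correct homogeneity under the mixed-order index system $(s_i,t_j,r_h)$, and that each datum $F_i$ and $\phi_h$ is measured in exactly the Sobolev or fractional trace space dictated by its position in that index system. Once this kernel analysis and index bookkeeping are in hand, the remaining freezing-of-coefficients perturbation and partition-of-unity patching are routine.
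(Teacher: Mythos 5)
The paper does not prove this proposition at all: it is quoted verbatim as Theorem 10.5 of Agmon--Douglis--Nirenberg \cite{ADN-2} and used as a black box (its only role here is to yield Lemma \ref{Lmm-Stokes} and Lemma \ref{Lmm-Laplace}). So the only meaningful comparison is with the original ADN argument, and your outline does follow that same classical route: interior parametrix for the Douglis--Nirenberg elliptic symbol, tangential Fourier transform of the constant-coefficient half-space problem, with condition (SC) giving the $m$-dimensional space of decaying solutions and (CBC) giving unique solvability of the matching system, followed by freezing of coefficients, partition of unity, and a bootstrap in $l$ starting from the $W^{l_1+t_j,p}$ baseline. The remarks about absorbing the freezing error via uniform continuity of the top-order coefficients and treating commutators as lower-order terms are also the standard mechanism.

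However, as a proof the proposal is incomplete at precisely the point where the theorem lives. The half-space model estimate in $L^p$, with $F_i$ measured in $W^{l-s_i,p}$ and $\phi_h$ in the fractional trace space $W^{l-r_h-\frac{1}{p},p}(\partial\Omega)$ under the mixed index system $(s_i,t_j,r_h)$, is the actual content of Theorem 10.5, and you only name it as ``the main obstacle'' before declaring the rest routine. Invoking Mikhlin--H\"ormander handles the interior (pure convolution) part, but the boundary Poisson-type operators built from $B'_{hj}$ and the adjoint matrix $L^{jk}(x,\Xi+\tau\n)$ are not Fourier multipliers in all variables: one must construct the explicit kernels, verify their mixed homogeneity and cancellation so that Calder\'on--Zygmund theory applies, and prove the $W^{l-r_h-\frac{1}{p},p}$ boundary estimates (equivalently, work with the infimum-of-extensions norm in a way compatible with the chart-by-chart patching). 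This kernel analysis occupies the bulk of the ADN papers and cannot be absorbed into ``index bookkeeping''; until it is supplied, the argument is an accurate road map of the known proof rather than a proof.
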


Remark that if the solution to \eqref{ES} is unique, the term $\sum_{j=1}^N \| u_j \|_{L^p (\Omega)}$ on the right can be omitted.

By employing the ADN theory in Proposition \ref{ADN-theory}, Temam \cite{Temam-1977} proved the following conclusion.
\begin{lemma}[Proposition 2.2, Chapter I of \cite{Temam-1977}]\label{Lmm-Stokes}
	Let $\Omega$ be an open bounded set of class $C^r$, $r = \max (m+2,2)$, integer $m \geq 0$. Let us suppose that
	\begin{equation}
		\begin{aligned}
			u \in W^{1,p} (\Omega), \ q \in L^p (\Omega) \,, \ 1 < p < + \infty \,,
		\end{aligned}
	\end{equation}
	are solutions of the generalized Stokes problem \eqref{GS}:
	\begin{equation}\label{GS}
		\begin{aligned}
			- \mu \Delta u + \nabla q = & f \quad \textrm{in } \Omega \,, \\
			\nabla \cdot u = & g \quad \textrm{in } \Omega \,, \\
			u = & \phi \quad \textrm{on } \partial \Omega \,.
		\end{aligned}
	\end{equation}
	If $f \in W^{m,p} (\Omega)$, $g \in W^{m+1, p} (\Omega)$ and $\phi \in W^{m+2-\frac{1}{p}, p} (\partial \Omega)$, then $u \in W^{m+2, p} (\Omega)$, $q \in W^{m+1,p} (\Omega)$, and there exists a constant $c_0 (p, \mu, m, \Omega)$ such that
	\begin{equation}\label{Stokes-Bnd}
		\begin{aligned}
			\| u \|_{W^{m+2,p} (\Omega)} + & \| q \|_{W^{m+1, p} (\Omega) / \R} \\
			& \leq c_0 \Big\{ \| f \|_{W^{m,p} (\Omega)} + \| g \|_{W^{m+1,p} (\Omega)} + \| \phi \|_{W^{m+2-\frac{1}{p}, p} (\partial \Omega)} + d_p \| u \|_{L^p (\Omega)} \Big\} \,,
		\end{aligned}
	\end{equation}
	where $d_p = 0$ for $p \geq 2$, $d_p = 1$ for $1 < p < 2$.
\end{lemma}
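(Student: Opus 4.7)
The plan is to recast \eqref{GS} as a Douglis--Nirenberg elliptic boundary value problem and invoke Proposition \ref{ADN-theory}. Working in dimension $n$, I view $(u_1,\ldots,u_n,q)$ as the unknown with DN weights $t_j=2$ for $j=1,\ldots,n$ and $t_{n+1}=1$; the momentum equations carry $s_i=0$ and the incompressibility equation $\nabla\cdot u=g$ carries $s_{n+1}=-1$, while the $n$ Dirichlet components $u_j=\phi_j$ on $\partial\Omega$ correspond to $r_h=-2$. With this bookkeeping the degree inequalities $\deg l_{ij}\le s_i+t_j$ and $\deg B_{hj}\le r_h+t_j$ hold, with equality on the principal parts ($-\mu\Delta$, $\partial_i$, trace identity), so \eqref{GS} fits the abstract framework of \eqref{ES}.

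The next step is to check the three structural hypotheses of the ADN framework. A Schur-complement computation for the principal symbol yields
\begin{equation*}
L(\xi)=\det\begin{pmatrix}\mu|\xi|^2 I_n & i\xi\\ i\xi^\top & 0\end{pmatrix}=\mu^{n-1}|\xi|^{2n},
\end{equation*}
which is of even degree $2n$ and uniformly elliptic on $\overline\Omega$. The supplementary condition (SC) holds because $L(\xi+\tau\xi')$ is a positive constant times $|\xi+\tau\xi'|^{2n}$, so for every pair of linearly independent real $\xi,\xi'$ the polynomial in $\tau$ has exactly $n$ roots with $\mathrm{Im}\,\tau>0$; hence the ADN half-degree is $n$ and $M^+(\xi,\tau)=(\tau-i|\xi|)^n$. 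The complementing boundary condition (CBC) for the Dirichlet trace must be verified by forming the adjugate matrix $L^{jk}(\xi+\tau\mathbf{n})$ of the principal symbol and showing that the rows of $B'_{hj}L^{jk}$, with $B'_{hj}$ the identity trace on the $j$-th velocity slot, are linearly independent modulo $M^+$; this is the classical Dirichlet--Stokes calculation.

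With the hypotheses verified, Proposition \ref{ADN-theory} applied at the ADN regularity index $l=m$ (where $m$ is the Sobolev index appearing in the statement of the lemma) delivers
\begin{equation*}
\|u\|_{W^{m+2,p}}+\|q\|_{W^{m+1,p}}\le K\Big(\|f\|_{W^{m,p}}+\|g\|_{W^{m+1,p}}+\|\phi\|_{W^{m+2-\frac{1}{p},p}(\partial\Omega)}+\|u\|_{L^p}+\|q\|_{L^p}\Big).
\end{equation*}
Since $q$ enters \eqref{GS} only through $\nabla q$, passing to the quotient by $\R$ absorbs $\|q\|_{L^p}$ into the left-hand norm $\|q\|_{W^{m+1,p}(\Omega)/\R}$. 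For $p\ge 2$ the residual $\|u\|_{L^p}$ is removed by a standard uniqueness-plus-compactness contradiction argument (the homogeneous Dirichlet Stokes problem admits only $u\equiv 0$, so the low-order term is absorbable by compactness of the embedding $W^{m+2,p}\hookrightarrow L^p$), giving $d_p=0$; for $1<p<2$ the same device fails under the assumed regularity of the data and the term is retained with $d_p=1$.

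The main obstacle is the explicit algebraic verification of the CBC: one must decompose the Stokes principal symbol along the normal/tangential directions at each boundary point and check that the relevant rows of $B'_{hj}L^{jk}$ are linearly independent modulo $M^+=(\tau-i|\xi|)^n$. Once this algebraic computation is in place, the remainder of the argument is purely functional-analytic bookkeeping: quotienting out the pressure constant and distinguishing the regimes $p\ge 2$ versus $1<p<2$ when removing the lower-order velocity term.
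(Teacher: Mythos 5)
You are reconstructing exactly the route the paper relies on: the paper does not prove Lemma \ref{Lmm-Stokes} itself, it quotes it from Temam (Proposition 2.2, Chapter I), whose proof is precisely the Agmon--Douglis--Nirenberg reduction you outline. Your Douglis--Nirenberg bookkeeping is correct ($t_j=2$ for the velocity components, $t_{n+1}=1$ for $q$, $s_i=0$ for the momentum equations, $s_{n+1}=-1$ for the divergence equation, $r_h=-2$ for the Dirichlet conditions), the principal determinant $L(\xi)=\mu^{n-1}|\xi|^{2n}$, the supplementary condition, uniform ellipticity, and the identification $M^{+}(\xi,\tau)=(\tau-i|\xi|)^{n}$ are all right, and the exponent arithmetic in Proposition \ref{ADN-theory} with $l=m$ does reproduce the norms appearing in \eqref{Stokes-Bnd}.

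However, as a self-contained argument the proposal has a hole at its core and two looser ends. First, the complementing boundary condition is the only substantive algebraic step in this approach, and you explicitly defer it (``the classical Dirichlet--Stokes calculation''); announcing it is a citation, not a proof, so the verification that the rows of $B'_{hj}L^{jk}(\xi+\tau\mathbf{n})$ are independent modulo $(\tau-i|\xi|)^{n}$ must actually be carried out. Second, the disposal of the residual low-order terms is not correct as written: Proposition \ref{ADN-theory} leaves both $\| u \|_{L^p}$ and $\| q \|_{L^p}$ on the right, and ``passing to the quotient by $\R$'' does not absorb $\| q \|_{L^p}$ --- one must normalize, say $\int_\Omega q \, dx = 0$, and control $\| q \|_{L^p}$ via a Ne\v{c}as-type estimate for $\nabla q$ or run the compactness--contradiction argument simultaneously in $(u,q)$; moreover the dichotomy $d_p=0$ for $p\geq 2$ versus $d_p=1$ for $1<p<2$ comes from the uniqueness theory of the Stokes problem in $W^{1,p}$ (for $p \geq 2$ a $W^{1,p}$ solution on a bounded domain is a variational solution, hence unique; for $p<2$ this is unavailable), not from ``the assumed regularity of the data''. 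Third, Proposition \ref{ADN-theory} presupposes the finiteness of $\| u \|_{W^{2,p}}$ and $\| q \|_{W^{1,p}}$ (the norms $W^{l_1+t_j,p}$ with $l_1=0$), whereas the lemma only assumes $u\in W^{1,p}$, $q\in L^p$; Temam bridges this by an approximation/bootstrap step that your outline omits. Until these three points are supplied, the proposal is a correct plan of the cited proof rather than a proof.
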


Next, the usual $L^p$-theory of elliptic equation can be reduced by the ADN theory in Proposition \ref{ADN-theory}. One takes the following Laplace equation into consideration:
\begin{equation}\label{phi-Laplace}
	\begin{aligned}
		\Delta \phi = \tfrac{\partial^2}{\partial x_1^2} \phi + \cdots + \tfrac{\partial^2}{\partial x_n^2} \phi = & h \quad \textrm{ in } \Omega \,, \\
		\tfrac{\partial \phi}{\partial \n} = & g \quad \textrm{ in } \partial \Omega \,.
	\end{aligned}
\end{equation}
By letting $\phi_i = \tfrac{\partial}{\partial x_i} \phi$ with $i = 1, \cdots, n$, and $\phi_{n+1} = \phi$, one gains a first order differential system
\begin{equation}\label{1order-ES}
	\left\{
	\begin{aligned}
		& - \phi_1 + \tfrac{\partial}{\partial x_1} \phi_{n+1} = 0 \,, \\
		& \qquad \cdots \cdots \\
		& - \phi_n + \tfrac{\partial}{\partial x_n} \phi_{n+1} = 0 \,, \\
		&\tfrac{\partial}{\partial x_1} \phi_1 + \cdots + \tfrac{\partial}{\partial x_n} \phi_n = h
	\end{aligned}
	\right.
\end{equation}
on $\Omega$ with the boundary condition
\begin{equation}\label{BC-1order-ES}
	\begin{aligned}
		\sum_{j =1}^n \n_j \phi_j = g \quad \textrm{on } \partial \Omega \,,
	\end{aligned}
\end{equation}
where $\n_j$ denotes the $j$-th component of the normal vector $\n$ to $\partial \Omega$. When the weights
\begin{equation*}
	\begin{aligned}
		t_1 = \cdots = t_n = 1 \,, \ t_{n+1} = 2
	\end{aligned}
\end{equation*}
are assigned to $\phi_1, \cdots, \phi_n, \phi_{n+1}$, respectively, and
\begin{equation*}
	\begin{aligned}
		s_1 = \cdots = s_n = -1 \,, \ s_{n+1} = 0
	\end{aligned}
\end{equation*}
to the first, $\cdots$, the $n$-th, and the $(n+1)$-th equations, respectively, the characteristic determinant of \eqref{1order-ES} thereby is
\begin{equation*}
	L (x, \Xi) =
	\left|
	\begin{array}{ccccc}
		-1 & 0 & \cdots & 0 & \xi_1 \\
		0 & -1 & \cdots & 0 & \xi_2 \\
		\vdots & \vdots & \vdots & \vdots & \vdots \\
		0 & 0 & \cdots & - 1 & \xi_n \\
		\xi_1 & \xi_2 & \cdots & \xi_n & 0
	\end{array}
	\right|
	= (-1)^n ( \xi_1^2 + \cdots \xi_n^2 ) = (-1)^n |\Xi|^2 \,,
\end{equation*}
whose degree is $2$, i.e., $m = \tfrac{1}{2} \deg L (x, \Xi) = 1$. The system \eqref{1order-ES} is thus elliptic. Moreover, the equations \eqref{1order-ES} only need one boundary condition \eqref{BC-1order-ES}. It is obvious that the (SC) condition on $L(x,\Xi)$ holds, and the root $\tau^+ (\Xi, \Xi')$ of $L(x, \Xi + \tau \Xi') = 0$ with positive imaginary is
\begin{equation*}
	\begin{aligned}
		\tau^+ (\Xi, \Xi') = \tfrac{- \Xi \cdot \Xi' + \sqrt{-1} \sqrt{|\Xi|^2 |\Xi'|^2 - |\Xi \cdot \Xi'|^2}}{|\Xi'|^2} \,.
	\end{aligned}
\end{equation*}
Moreover, in \eqref{BC-1order-ES}, the weight $r_1 = -1$ is assigned to the only boundary condition. It is easy to verify that
\begin{equation*}
	\begin{aligned}
		M^+ (x,\Xi, \tau) = \tau - \tau^+ (\Xi, \mathbf{n} ) = \tau - \sqrt{-1} |\Xi| \,.
	\end{aligned}
\end{equation*}
Observe that $B_{1j} = \n_j$ for $1 \leq j \leq n$ and $B_{1,n+1} = 0$. We have $B_{1j} = B_{1j}'$. A direct calculation shows that the matrix $(L^{jk} (x, \Xi))_{1 \leq j,k \leq n+1}$ is
\begin{equation*}
	|\Xi|^{-2}
	\left(
	\begin{array}{ccccc}
		\xi_1^2 - |\Xi|^2 & \xi_1 \xi_2 & \cdots & \xi_1 \xi_n & \xi_1 \\
		\xi_2 \xi_1 & \xi_2^2 - |\Xi|^2 & \cdots & \xi_2 \xi_n & \xi_2 \\
		\vdots & \vdots & \vdots & \vdots & \vdots \\
		\xi_n \xi_1 & \xi_n \xi_2 & \cdots & \xi_n^2 - |\Xi|^2 & \xi_n \\
		\xi_1 & \xi_2 & \cdots & \xi_n & 1
	\end{array}
	\right) \,.
\end{equation*}
By $\n \cdot \Xi = 0$ and $|\n| = 1$, the vector with elements $C_1 \sum_{j=1}^{n+1} B'_{1j} L^{jk} (x, \Xi + \tau \n)$ is then equal to
\begin{equation*}
	\begin{aligned}
		C_1 ( \tau \Xi - |\Xi|^2 \n , 0 ) \,,
	\end{aligned}
\end{equation*}
which is zero modulo $M^+$ if and only if $C_1 = 0$. Therefore, the complementing boundary condition (CBC) holds.

Therefore, the ADN theory in Proposition \ref{ADN-theory} directly concludes the results:

\begin{lemma}\label{Lmm-Laplace}
	Let $\Omega$ be an open bounded domain of class $C^l$, $l \geq 0$ and $p > 1$. Assume that $h \in W^{l,p} (\Omega)$, $g \in W^{l+1 - \frac{1}{p}, p} (\partial \Omega)$, and $\phi \in W^{2,p} (\Omega)$ is a solution to the boundary value problem \eqref{phi-Laplace}. Then, a constant $K > 0$ exists such that $\phi \in W^{l+2, p} (\Omega)$ and
	\begin{equation}
		\begin{aligned}
			\| \phi \|_{W^{l+2,p} (\Omega)} \leq K \big( \| h \|_{W^{l,p} (\Omega)} + \| g \|_{W^{l+1 - \frac{1}{p}, p} (\partial \Omega)} + \| \phi \|_{W^{1,p} (\Omega)} \big) \,.
		\end{aligned}
	\end{equation}
\end{lemma}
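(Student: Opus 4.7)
The plan is to apply Proposition \ref{ADN-theory} directly to the first-order elliptic system \eqref{1order-ES} with boundary condition \eqref{BC-1order-ES} that has already been constructed in the excerpt. With the assigned weights $t_1 = \cdots = t_n = 1$, $t_{n+1} = 2$, $s_1 = \cdots = s_n = -1$, $s_{n+1} = 0$, and $r_1 = -1$, the text has already verified uniform ellipticity through the characteristic determinant $L(x,\Xi) = (-1)^n|\Xi|^2$, the supplementary condition (SC) via the explicit root $\tau^+ = \sqrt{-1}|\Xi|$ when $\Xi' = \mathbf{n}$, and the complementing boundary condition (CBC) via the computation $C_1(\tau\Xi - |\Xi|^2\mathbf{n}, 0) \equiv 0 \pmod{M^+}$ forcing $C_1 = 0$. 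So all structural hypotheses of the ADN theorem are in force; what remains is only to match up the norm indices and translate between the first-order system and the scalar unknown $\phi$.

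Specifically, I would identify the right-hand sides as $F_1 = \cdots = F_n = 0$ and $F_{n+1} = h$, together with the single boundary datum $\phi_1 = g$. Setting $l_1 := \max(0, r_1+1) = 0$, the hypothesis $\phi \in W^{2,p}(\Omega)$ supplies precisely $\|\phi_j\|_{W^{l_1+t_j,p}(\Omega)} < \infty$ for $j = 1,\ldots,n+1$, because $\phi_j = \partial_{x_j}\phi$ for $j \leq n$ and $\phi_{n+1} = \phi$. Proposition \ref{ADN-theory} then yields $\phi_j \in W^{l+t_j,p}(\Omega)$ together with
\begin{equation*}
\sum_{j=1}^{n+1} \|\phi_j\|_{W^{l+t_j,p}(\Omega)} \leq K\Bigl( \|h\|_{W^{l,p}(\Omega)} + \|g\|_{W^{l+1-\frac{1}{p},p}(\partial\Omega)} + \sum_{j=1}^{n+1} \|\phi_j\|_{L^p(\Omega)} \Bigr),
\end{equation*}
where the index shifts come from $l - s_{n+1} = l$, $l - r_1 - \tfrac{1}{p} = l + 1 - \tfrac{1}{p}$, and $l + t_{n+1} = l + 2$. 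Retaining only the $j = n+1$ contribution on the left and absorbing $\sum_j \|\phi_j\|_{L^p} \leq C \|\phi\|_{W^{1,p}(\Omega)}$ on the right yields the stated estimate.

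There is no serious obstacle here; the proof is essentially a bookkeeping exercise that translates the Neumann problem into the language of the ADN system and tallies the weights. The only points that merit care are: \emph{(i)} confirming that the existence assumption $\phi \in W^{2,p}(\Omega)$ supplies the initial regularity $\|\phi_j\|_{W^{l_1+t_j,p}}$ required to bootstrap the ADN bound (since Neumann data require a compatibility condition for existence, but this is bypassed by assuming a solution), and \emph{(ii)} checking that the extra $W^{l+1,p}$-bounds produced on $\phi_j = \partial_{x_j}\phi$ for $j \leq n$ are automatically consistent with $\phi \in W^{l+2,p}(\Omega)$, so no redundancy or contradiction arises. Once these are noted, the estimate follows immediately.
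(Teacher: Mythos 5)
Your proposal is correct and is essentially the paper's own argument: the paper verifies uniform ellipticity, (SC), and (CBC) for the first-order system \eqref{1order-ES}--\eqref{BC-1order-ES} and then states that Proposition \ref{ADN-theory} ``directly concludes'' Lemma \ref{Lmm-Laplace}, which is exactly the application and index bookkeeping you carry out (your identification $F_{n+1}=h$, $l_1=0$, the use of $\phi\in W^{2,p}$ for the a priori finiteness, and the absorption of $\sum_j\|\phi_j\|_{L^p}$ into $\|\phi\|_{W^{1,p}}$ are the details the paper leaves implicit).
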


\subsection{Boundary conditions for ACNS system \eqref{equ-1}}

In this paper, the boundary values for ACNS system \eqref{equ-1} is imposed on \eqref{BC-ACNS}, i.e.,
\begin{equation*}
	\begin{aligned}
		u |_{\partial \Omega} = 0 \,, \quad \tfrac{\partial \phi}{\partial \mathbf{n}} |_{\partial \Omega} = 0 \,.
	\end{aligned}
\end{equation*}
The goal of this paper is to investigate the well-posedness of the ACNS equations \eqref{equ-1} in the Sobolev space $H^s (\Omega)$ for large integer $s > 0$. To deal with the boundary values of the higher order derivatives is thereby one of the {\em key points} of this paper. However, the values of the higher order spatial derivatives restricted on the boundary $\partial \Omega$ is impossible to be controlled by the boundary values \eqref{BC-ACNS}, i.e., $u |_{\partial \Omega} = 0 \,, \tfrac{\partial \phi}{\partial \mathbf{n}} |_{\partial \Omega} = 0$. The idea is to convert the boundary values of the higher order spatial derivatives into that of the higher order time derivatives by the constitutive of the equations and ADN theory. For the boundary values of higher order time derivatives, the conditions \eqref{BC-ACNS}, i.e., $u |_{\partial \Omega} = 0 \,, \tfrac{\partial \phi}{\partial \mathbf{n}} |_{\partial \Omega} = 0$, show that, for $j \in \mathbb{N}$,
\begin{equation}\label{BC-HighTimeD}
	\begin{aligned}
		\partial_t^j u |_{\partial \Omega} = 0 \,, \quad \tfrac{\partial}{\partial \mathbf{n}} \partial_t^j \phi |_{\partial \Omega} = 0 \,,
	\end{aligned}
\end{equation}
provided that they are all well-defined.

\subsection{Some basic estimates}

In this subsection, we first give some calculus inequalities, which will be frequently used later.

For the functions $u (t,x)$ and $\phi (t, x)$ satisfying the boundary conditions \eqref{BC-HighTimeD} with integers $j \geq 0$, one has
\begin{equation}
	\begin{aligned}
		\int_\Omega \nabla \partial_t^j u d x = \int_{\partial \Omega} \partial_t^j u \otimes \n d S = 0 \,, \quad \int_\Omega \Delta \partial_t^j \phi d x = \int_{\partial \Omega} \tfrac{\partial}{\partial \n} \partial_t^j \phi d S = 0 \,.
	\end{aligned}
\end{equation}
Then the Gagliardo-Nirenberg interpolation inequality gives the following results.
\begin{lemma}\label{Lmm-Calculus}
	For any $2 < p \leq 6$, $6 < r \leq \infty$ and integer $j \geq 0$, there hold
	\begin{equation}
		\begin{aligned}
			& \| \partial_t^j u \|_{L^p} \lesssim \| \partial_t^j u \|^{\frac{3}{p} - \frac{1}{2}} \| \nabla \partial_t^j u \|^{\frac{3}{2} - \frac{3}{p} } \,, \\
			& \| \partial_t^j u \|_{L^\infty} \lesssim \| \Delta \partial_t^j u \|^\frac{1}{2} \| \nabla \partial_t^j u \|^\frac{1}{2} \,, \\
			& \| \nabla \partial_t^j u \|_{L^p} \lesssim \| \nabla \partial_t^j u \|^{\frac{3}{p} - \frac{1}{2}} \| \Delta \partial_t^j u \|^{\frac{3}{2} - \frac{3}{p} } \,, \\
			& \| \partial_t^j \phi \|_{L^r} \lesssim \| \partial_t^j \phi \|^{\frac{1}{4} + \frac{3}{2 r}} \| \Delta \partial_t^j \phi \|^{\frac{3}{4} - \frac{3}{2 r}} + \| \partial_t^j \phi \| \,, \\
			& \| \nabla \partial_t^j \phi \|_{L^p} \lesssim \| \nabla \partial_t^j \phi \|^{\frac{3}{p} - \frac{1}{2}} \| \Delta \partial_t^j \phi \|^{\frac{3}{2} - \frac{3}{p} } + \| \nabla \partial_t^j \phi \| \,, \\
			& \| \Delta \partial_t^j \phi \|_{L^p} \lesssim \| \Delta \partial_t^j \phi \|^{\frac{3}{p} - \frac{1}{2}} \| \nabla \Delta \partial_t^j \phi \|^{\frac{3}{2} - \frac{3}{p} } \,,
		\end{aligned}
	\end{equation}
    provided that the right-hand side of the quantities are all finite.
\end{lemma}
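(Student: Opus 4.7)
The inequalities are all standard Gagliardo-Nirenberg (GN) and Agmon interpolation bounds in three dimensions, applied with $v$ chosen as $\partial_t^j u$, $\nabla \partial_t^j u$, $\partial_t^j \phi$, $\nabla \partial_t^j \phi$, or $\Delta \partial_t^j \phi$. My plan is to treat the $u$-family and the $\phi$-family separately, since they inherit different boundary data from \eqref{BC-HighTimeD}. A quick scaling check confirms that the exponents displayed are the only ones consistent with dimension three, so the work is just to produce the bounds with the correct constants and the correct (presence or absence of) additive lower-order remainders.

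For the three $u$-estimates, I would exploit the homogeneous Dirichlet condition $\partial_t^j u|_{\partial \Omega}=0$. For the first, since $\partial_t^j u \in H^1_0(\Omega)$, I would zero-extend it to a function in $H^1(\R^3)$ and apply the scale-invariant GN bound on $\R^3$, which has no additive remainder. For the $L^\infty$-bound I would use Agmon's inequality in three dimensions, $\|v\|_{L^\infty} \lesssim \|v\|_{H^1}^{1/2}\|v\|_{H^2}^{1/2}$, combined with Poincar\'e $\|v\|_{H^1}\sim \|\nabla v\|$ and the Dirichlet elliptic regularity $\|v\|_{H^2}\lesssim \|\Delta v\|$ valid for $v \in H^2\cap H^1_0$. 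For the third, I would apply bounded-domain GN with remainder to $w=\nabla \partial_t^j u$, convert $\|\nabla w\|=\|\nabla^2 \partial_t^j u\|$ into $\|\Delta \partial_t^j u\|$ via the same Dirichlet elliptic estimate, and absorb the leftover $\|\nabla \partial_t^j u\|$ using $\|\nabla \partial_t^j u\|^2 = -\langle \partial_t^j u, \Delta \partial_t^j u\rangle \leq \|\partial_t^j u\|\|\Delta \partial_t^j u\| \lesssim \|\nabla \partial_t^j u\|\|\Delta \partial_t^j u\|$, which gives $\|\nabla \partial_t^j u\|\lesssim \|\Delta \partial_t^j u\|$ and lets Young's inequality finish the argument.

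For the $\phi$-estimates, the Neumann condition $\tfrac{\partial}{\partial \n}\partial_t^j\phi|_{\partial\Omega}=0$ forbids zero extension, so I would apply the bounded-domain GN inequality directly; this is exactly why an additive lower-order term is present in (4) and (5). To replace $\|\nabla^2 \partial_t^j \phi\|$ by $\|\Delta \partial_t^j \phi\|$ on the right I would use the Neumann elliptic regularity estimate $\|\nabla^2 \partial_t^j\phi\|\lesssim \|\Delta \partial_t^j \phi\|+\|\partial_t^j \phi\|_{H^1}$, whose lower-order cost can be folded into the additive term already present. For the last inequality (6), I would apply bounded-domain GN with remainder to $v=\Delta \partial_t^j \phi$; this is the one place where the mean-zero observation $\int_\Omega \Delta \partial_t^j \phi\, \d x = 0$ stated just before the lemma is essential, because Poincar\'e for mean-zero functions then gives $\|v\|\lesssim \|\nabla v\|$, which absorbs the remainder and recovers the clean scale-invariant form.

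The plan is routine; the only point requiring genuine care is the bookkeeping between the additive lower-order remainders that bounded-domain GN naturally produces and the three mechanisms used to eliminate them where necessary (Dirichlet zero extension for the $u$-family, Dirichlet-Poincar\'e-elliptic regularity for the $L^\infty$ and $L^p$-gradient bounds of $u$, and the mean-zero Poincar\'e inequality for (6)). In the $\phi$-estimates (4) and (5) no such elimination is available, and the additive terms are left visible in the statement, which is consistent with the form of the bounds the rest of the paper actually uses.
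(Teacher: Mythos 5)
Your proposal is correct and takes essentially the same route as the paper, which gives no written proof at all beyond recording the mean-zero identities $\int_\Omega \nabla\partial_t^j u\,\d x=\int_\Omega \Delta\partial_t^j\phi\,\d x=0$ and invoking Gagliardo--Nirenberg; your argument simply supplies the standard ingredients (zero extension/Poincar\'e for the Dirichlet $u$-family, Agmon plus Dirichlet elliptic regularity, bounded-domain GN with remainder plus Poincar\'e--Wirtinger for the Neumann $\phi$-family). The one point to tighten is the fifth inequality: use the Neumann regularity bound in its constants-modded form $\| \nabla^2 \partial_t^j \phi \| \lesssim \| \Delta \partial_t^j \phi \| + \| \nabla \partial_t^j \phi \|$ (subtract the spatial mean and apply Poincar\'e--Wirtinger, exactly as you already do for the last inequality), since the version with $\| \partial_t^j \phi \|_{H^1}$ on the right would leave a $\| \partial_t^j \phi \|$-contribution that the stated bound, which contains only $\| \nabla \partial_t^j \phi \|$ as its additive term, does not accommodate.
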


\section{The A Priori Estimates}\label{Sec:Apriori}
	In this section, we devoted to the a priori estimate for the system \eqref{equ-1}-\eqref{equ-2}. We divided this section into three parts. We first obtain the closed $H^2$-estimates of \eqref{equ-1}. Then the higher order time derivative estimates are derived. At the end, the higher order spatial derivative estimates are established by employing the ADN theory.
	
\subsection{Preparation}
	We first introduce the following energy functional $E_j (t)$ and dissipation functional $D_j (t)$ for $j \geq 0$,
\begin{equation}\label{EjDj}
	\begin{split}
	{E}_j(t)  = &  \| \partial_t^{j} u \|_{L^2_{\rho(\phi)}}^2 + \mu \| \nabla \partial_t^{j} u \|^2 + \| \partial_t^{j} \phi \|^2 + (\gamma \lambda+1) \| \nabla \partial_t^{j}\phi \|^2 + \gamma\lambda\| \Delta \partial_t^{j} \phi \|^2,\\
		{D}_j(t)  = & \mu\| \nabla \partial_t^{j} u \|^2 + \| \partial_t^j u_t \|_{L^2_{\rho(\phi)}}^2 + \gamma \lambda \| \nabla \partial_t^{j} \phi \|^2 +  \| \partial_t^j \phi_t \|^2 + \gamma \lambda \| \Delta \partial_t^{j} \phi \|^2 \\
		&  \quad + \| \nabla \partial_t^j \phi_t \|^2 + \kappa \| \Delta  \partial_t^{j} u \|^2 + \kappa \| \nabla \Delta \partial_t^{j}  \phi \|^2 + \kappa \| \partial_t^j p \|^2_1 \,,
	\end{split}
\end{equation}
where the positive constant $\kappa > 0$ in $D_j (t)$ will be determined later.

Moreover, the following bounds will be used later.

\begin{lemma}\label{Lmm-K}
	Let $K_u$ and $K_\phi$ be defined in \eqref{Ku} and \eqref{Kphi} below, i.e.,
	\begin{equation*}
		\begin{aligned}
			K_u = \rho(\phi) u \cdot \nabla u + \lambda \nabla \phi \Delta\phi \,, \ K_\phi = u \cdot \nabla \phi + \gamma \lambda f' (\phi) + \gamma \rho' (\phi) \tfrac{|u|^2}{2} \,.
		\end{aligned}
	\end{equation*}
    Then, for any integer $k \geq 0$, there hold
    \begin{equation}\label{Ku-Kphi-bnd}
    	\begin{aligned}
    		\| \partial_t^k K_u \| \leq & \delta \sum_{0 \leq j \leq k} \big( \| \Delta \partial_t^j u \| + \| \nabla \Delta \partial_t^j \phi \| \big) + C_\delta \sum_{0 \leq j \leq k} (1 + E_j^4 (t)) E_j^\frac{5}{2} (t) \,,  \\
    		\| \partial_t^k K_\phi \|_1  \leq & \delta \sum_{0 \leq j \leq k} \| \Delta \partial_t^j u \| + C_\delta \sum_{0 \leq j \leq k} ( 1 + E_j^4 (t) ) E_j^\frac{5}{2} (t)
    	\end{aligned}
    \end{equation}
    for any small $\delta > 0$ and some $C_\delta > 0$. Here $E_j (t)$ are defined in \eqref{EjDj}.
\end{lemma}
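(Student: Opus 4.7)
The plan is to expand $\partial_t^k K_u$ and $\partial_t^k K_\phi$ by the Leibniz rule, use the Faà di Bruno formula to handle time derivatives of the composite functions $\rho(\phi)$, $\rho'(\phi)$ and $f'(\phi)$, and then estimate each resulting product by a combination of Hölder's inequality, the Gagliardo–Nirenberg-type inequalities of Lemma \ref{Lmm-Calculus}, and Young's inequality, so that the only top-order spatial norms that survive are $\|\Delta \partial_t^j u\|$ and $\|\nabla \Delta \partial_t^j \phi\|$ with arbitrarily small coefficient.

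\textbf{Step 1: Leibniz expansion.} For $\partial_t^k K_u$, write
\begin{equation*}
\partial_t^k\!\big[\rho(\phi)\,u\cdot\nabla u\big]
= \sum_{j_1+j_2+j_3=k}\binom{k}{j_1,j_2,j_3}\,\partial_t^{j_1}[\rho(\phi)]\,\partial_t^{j_2}u\cdot\nabla\partial_t^{j_3}u,
\end{equation*}
and $\partial_t^k[\lambda\nabla\phi\,\Delta\phi]=\lambda\sum_{j=0}^{k}\binom{k}{j}\nabla\partial_t^{j}\phi\,\Delta\partial_t^{k-j}\phi$. By Faà di Bruno, $\partial_t^{j_1}[\rho(\phi)]$ is a polynomial in $\rho^{(n)}(\phi)$ and $\partial_t^{i}\phi$ for $i\le j_1$; since $\rho$ is quadratic (so $\rho^{(n)}$ is a constant for $n\le 2$ and vanishes for $n\ge3$), these derivatives are simply finite sums of products $\prod \partial_t^{i_\alpha}\phi$ with total $\partial_t$-order equal to $j_1$ and at most two factors. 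The analogous expansion works for $\partial_t^k K_\phi$ and (after one more spatial gradient) for $\nabla\partial_t^k K_\phi$.

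\textbf{Step 2: Hölder–GN bounds on each term.} For every product I would place the factor of highest spatial order in $L^2$ and the remaining factors in $L^\infty$ or $L^6$ (so that $\tfrac1{p}+\tfrac1{q}+\cdots=\tfrac12$), then invoke Lemma \ref{Lmm-Calculus} to interpolate. Schematically, a typical borderline term is
\begin{equation*}
\|u\cdot\nabla\partial_t^{k}u\|\le \|u\|_{L^\infty}\|\nabla\partial_t^{k}u\|\lesssim \|\Delta u\|^{1/2}\|\nabla u\|^{1/2}\|\nabla\partial_t^{k}u\|,
\end{equation*}
and Young's inequality $abc\le\delta a^{2}+C_\delta(bc)^{2}$ with $a=\|\Delta u\|^{1/2}$ isolates $\delta\|\Delta u\|$ together with a lower-order remainder controlled by $E_0$ and $E_k$. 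The same template applies to $\nabla\partial_t^{j}\phi\,\Delta\partial_t^{k-j}\phi$ using $\|\nabla\phi\|_{L^\infty}\lesssim\|\nabla\Delta\phi\|^{3/4}\|\nabla\phi\|^{1/4}$, producing a $\delta\|\nabla\Delta\partial_t^{j}\phi\|$ contribution plus a power of $E_j$; and to the $H^1$-gradient of $u\cdot\nabla\phi$, which at worst produces $u\cdot\nabla^{2}\partial_t^{k}\phi$ (absorbed by $\|\Delta\partial_t^{k}\phi\|\le E_k^{1/2}$) and $\partial_t^{k}u\cdot\nabla^{2}\phi$ (absorbed by $\|\partial_t^{k}u\|_{L^\infty}\lesssim \|\Delta\partial_t^{k}u\|^{1/2}\|\nabla\partial_t^{k}u\|^{1/2}$, triggering the $\delta\|\Delta\partial_t^{k}u\|$ term that appears in the statement of the lemma for $K_\phi$).

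\textbf{Step 3: Handling the polynomial nonlinearities.} Since $f'(\phi)=\tfrac{1}{\eps^2}\phi(\phi^2-1)$ is a cubic polynomial and $\rho'(\phi)$ is linear, the terms $\partial_t^k f'(\phi)$ and $\partial_t^k[\rho'(\phi)|u|^2]$ are again finite multilinear sums in $\partial_t^{i}\phi$ and $\partial_t^{i}u$. I would estimate each such multilinear product through the $L^\infty$--$L^2$--$L^\infty$ splitting, using the Sobolev-type bounds $\|\partial_t^i\phi\|_{L^\infty}\lesssim \|\Delta\partial_t^i\phi\|^{1/2}\|\partial_t^i\phi\|^{1/2}+\|\partial_t^i\phi\|$ from Lemma \ref{Lmm-Calculus}, so that every factor is ultimately controlled by $E_{i}^{1/2}$ plus (at most) one occurrence of $\|\Delta\partial_t^{j}u\|$ or $\|\nabla\Delta\partial_t^{j}\phi\|$ kept with coefficient $\delta$.

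\textbf{Step 4: Packaging into the polynomial bound.} Summing over all multi-indices, the non-top contributions are each products of powers of $E_j^{1/2}$ of total power at most $5$ (arising from three or four factors of degree $\le2$ in $E_j^{1/2}$ times one gradient factor). Bounding $E_{i}^{a}\le(1+E_{j}^{4})E_{j}^{5/2}$ for $i\le j\le k$ by crude monotonicity in the total energy yields precisely the stated right-hand side $C_\delta\sum_{j\le k}(1+E_j^{4})E_j^{5/2}$.

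\textbf{Main obstacle.} The conceptual ingredients are standard; the real burden is combinatorial bookkeeping: after Leibniz/Faà di Bruno one must verify that in \emph{every} resulting product exactly one factor can carry the top spatial order (to be absorbed by $\delta$) while the remaining factors are safely interpolated into $E_j$. The trickiest cases are those involving the cubic nonlinearity $f'(\phi)$ in $\nabla\partial_t^{k}K_\phi$, where the extra spatial derivative combines with up to three powers of $\phi$ and its time derivatives, and one must be careful that no term demands control of $\|\nabla\Delta\partial_t^{j}\phi\|$ (this is why the $K_\phi$-bound only contains $\|\Delta\partial_t^{j}u\|$ on the right).
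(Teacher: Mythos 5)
Your proposal is correct and follows essentially the same route as the paper's proof: a Leibniz expansion in time (the quadratic/cubic structure of $\rho$, $\rho'$, $f'$ making the Fa\`a di Bruno step trivial), term-by-term H\"older estimates combined with the interpolation inequalities of Lemma \ref{Lmm-Calculus}, Young's inequality to peel off the $\delta$-weighted top-order norms $\|\Delta \partial_t^j u\|$ and $\|\nabla\Delta\partial_t^j\phi\|$, and the key structural observation that $\nabla\partial_t^k K_\phi$ involves at most second-order spatial derivatives of $\phi$, so only $\delta\|\Delta\partial_t^j u\|$ appears in the $K_\phi$-bound. The differences are cosmetic (some $L^\infty$--$L^2$ splittings where the paper uses $L^4$--$L^4$, and an equally loose bookkeeping of the final powers of $E_j$, which is immaterial for the way the lemma is used downstream).
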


\begin{proof}
	We first dominate the quantity $\| \partial_t^k K_\phi \|$. Note that
	\begin{equation*}
		\begin{aligned}
			\| \partial_t^k [\rho (\phi) u \cdot \nabla u ] \| \leq C \sum_{a+b+c = k} \| \partial_t^a \rho (\phi) \|_{L^\infty} \| \partial_t^b u \|_{L^4} \| \nabla \partial_t^c u \|_{L^4} \,.
		\end{aligned}
	\end{equation*}
	By the expression of $\rho (\phi)$ in \eqref{rho-def} and Lemma \ref{Lmm-Calculus}, one knows that
	\begin{equation*}
		\begin{aligned}
			\| \partial_t^a \rho (\phi) \|_{L^\infty} \leq & C ( 1 + \sum_{a' \leq a} \| \partial_t^{a'} \phi \|^2_{L^\infty} ) \\
			\leq & C + C \sum_{a' \leq a} ( \| \partial_t^{a'} \phi \|^\frac{1}{2} \| \Delta \partial_t^{a'} \phi \|^\frac{3}{2} + \| \partial_t^{a'} \phi \|^2 ) \\
			\leq & C + C \sum_{a' \leq a} E_{a'} (t) \,.
		\end{aligned}
	\end{equation*}
	Moreover, Lemma \ref{Lmm-Calculus} indicates that
	\begin{equation*}
		\begin{aligned}
			\| \partial_t^b u \|_{L^4} \| \nabla \partial_t^c u \|_{L^4} \lesssim \| \partial_t^b u \|^\frac{1}{4} \| \nabla \partial_t^b u \|^\frac{3}{4} \| \nabla \partial_t^c u \|^\frac{1}{4} \| \Delta \partial_t^c u \|^\frac{3}{4} \lesssim E_b^\frac{1}{2} (t) E_c^\frac{1}{8} (t) \| \Delta \partial_t^c u \|^\frac{3}{4} \,.
		\end{aligned}
	\end{equation*}
	Collecting the above estimates and employing the Young's inequality, we have
	\begin{equation}\label{Ku-bdn-1}
		\begin{aligned}
			\| \partial_t^k [\rho (\phi) u \cdot \nabla u ] \| \leq C \sum_{a+b+c = k} \Big(1 + \sum_{a' \leq a} E_{a'} (t) \Big) E_b^\frac{1}{2} (t) E_c^\frac{1}{8} (t) \| \Delta \partial_t^c u \|^\frac{3}{4} \\
			\leq \delta \sum_{0 \leq j \leq k} \| \Delta \partial_t^j u \| + C_\delta \sum_{0 \leq j \leq k} ( 1 + E_j^4 (t) ) E_j^\frac{5}{2} (t)
		\end{aligned}
	\end{equation}
	for any small $\delta > 0$ and some $C_\delta > 0$. By the similar arguments in the previous estimates, one can calculate that
	\begin{equation}\label{Ku-bnd-2}
		\begin{aligned}
			\| \partial_t^k ( \lambda \nabla \phi \Delta \phi ) \| \leq & C \sum_{a+b = k} E_a^\frac{1}{2} (t) E_b^\frac{1}{8} (t) \| \nabla \Delta \phi \|^\frac{3}{4} \\
			\leq & \delta \sum_{0 \leq j \leq k} \| \nabla \Delta \partial_t^j \phi \| + C_\delta \sum_{0 \leq j \leq k} E_j^\frac{5}{2} (t) \,.
		\end{aligned}
	\end{equation}
	Then \eqref{Ku-bdn-1} and \eqref{Ku-bnd-2} imply the first inequality in \eqref{Ku-Kphi-bnd}.
	
	Next we focus on the norms $\| \partial_t^k K_\phi \|_1$. By the similar arguments in \eqref{Ku-bdn-1}, one can obtain
	\begin{equation*}
		\begin{aligned}
			\| \partial_t^k (u \cdot \nabla \phi) \| \leq C \sum_{a+b = k} E_a^\frac{1}{2} E_b^\frac{1}{2} (t) \leq C \sum_{0 \leq j \leq k} E_j (t) \,.
		\end{aligned}
	\end{equation*}
	Recalling $f' (\phi) = \tfrac{1}{\varepsilon^2} (\phi^2 - 1) \phi$ with $-1 \leq \phi \leq 1$, one knows
	\begin{equation*}
		\begin{aligned}
			\| \gamma \lambda \partial_t^k f' (\phi) \| \leq & C \sum_{a+b = k} \| \partial_t^a (\phi^2 - 1) \partial_t^b \phi \| \\
			\leq & C \sum_{a+b = k} \big( 1 + \sum_{a' \leq a} \| \partial_t^{a'} \phi \|^2_{L^\infty} \big) \| \partial_t^b \phi \| \\
			\leq & C \sum_{a+b = k} \Big[ 1 + \sum_{a' \leq a} \big( \| \partial_t^{a'} \phi \|^\frac{1}{2} \| \Delta \partial_t^{a'} \phi \|^\frac{3}{2} + \| \partial_t^{a'} \phi \|^2 \big) \Big] \| \partial_t^b \phi \| \\
			\leq & C \sum_{0 \leq j \leq k} (1 + E_j (t)) E_j^\frac{1}{2} (t) \,,
		\end{aligned}
	\end{equation*}
	where Lemma \ref{Lmm-Calculus} has been utilized. As similar as in \eqref{Ku-bdn-1}, it infers that
	\begin{equation*}
		\begin{aligned}
			\| \partial_t^k [ \gamma \rho' (\phi) \tfrac{|u|^2}{2} ] \| \leq & C \sum_{a+b+c = k} \Big( 1 + \sum_{a' \leq a} E_{a'} (t) \Big) E_b^\frac{1}{2} (t) E_c^\frac{1}{8} \| \nabla \partial_t^c u \|^\frac{3}{4} \\
			\leq & C \sum_{a+b+c = k} \Big( 1 + \sum_{a' \leq a} E_{a'} (t) \Big) E_b^\frac{1}{2} (t) E_c^\frac{1}{8} E_c^\frac{3}{8} (t) \\
			\leq & C \sum_{0 \leq j \leq k} ( 1 + E_j (t) ) E_j (t) \,.
		\end{aligned}
	\end{equation*}
	Consequently, we gain
	\begin{equation}\label{Kphi-1}
		\begin{aligned}
			\| \partial_t^k K_\phi \| \leq C \sum_{0 \leq j \leq k} ( 1 + E_j (t) ) E_j (t) \,.
		\end{aligned}
	\end{equation}

	Furthermore, one observes that
	\begin{equation*}
		\begin{aligned}
			\nabla \partial_t^k K_\phi = \partial_t^k ( \nabla u \cdot \nabla \phi ) + \partial_t^k ( u \cdot \nabla^2 \phi ) + \tfrac{\gamma \lambda}{\varepsilon^2} \nabla \partial_t^k [ (\phi^2 - 1) \phi ] + \gamma \nabla \partial_t^k [ \rho' (\phi) \tfrac{|u|^2}{2} ] \,.
		\end{aligned}
	\end{equation*}
	By Lemma \ref{Lmm-Calculus} and the H\"older inequality, it is implied that
	\begin{equation*}
		\begin{aligned}
			\| \partial_t^k (\nabla u \cdot \nabla \phi) \| \leq & C \sum_{a+b = k} \| \nabla \partial_t^a u \|_{L^4} \| \nabla \partial_t^b \phi \|_{L^4} \\
			\leq & C \sum_{a+b = k} \| \nabla \partial_t^a u \|^\frac{1}{4} \| \Delta \partial_t^a u \|^\frac{3}{4} \big( \| \nabla \partial_t^b \phi \|^\frac{1}{4} \| \Delta \partial_t^b \phi \|^\frac{3}{4} + \| \nabla \partial_t^b \phi \| \big) \\
			\leq & C \sum_{a+b = k} E_a^\frac{1}{8} (t) E_b^\frac{1}{2} (t) \| \Delta \partial_t^a u \|^\frac{3}{4} \\
			\leq & \tfrac{\delta}{3} \sum_{0 \leq j \leq k} \| \Delta \partial_t^j u \| + C_\delta \sum_{0 \leq j \leq k} E_j^\frac{5}{2} (t)
		\end{aligned}
	\end{equation*}
	for any small $\delta > 0$ and some $C_\delta > 0$. Then by Lemma \ref{Lmm-Calculus},
	\begin{equation*}
		\begin{aligned}
			\| \partial_t^k (u \cdot \nabla^2 \phi ) \| \leq C \sum_{a+b = k} \| \partial_t^a u \|_{L^\infty} \| \nabla^2 \partial_t^b \phi \| \leq C \sum_{a+b = k} \| \nabla \partial_t^a u \|^\frac{1}{2} \| \Delta \partial_t^a u \|^\frac{1}{2} \| \Delta \partial_t^b \phi \| \\
			\leq C \sum_{a+b = k} E_a^\frac{1}{4} (t) E_b^\frac{1}{2} (t) \| \Delta \partial_t^a u \|^\frac{1}{2} \leq \tfrac{\delta}{3} \sum_{0 \leq j \leq k} \| \Delta \partial_t^j u \| + C_\delta \sum_{0 \leq j \leq k} E_j^\frac{3}{2} (t) \,.
		\end{aligned}
	\end{equation*}
	Moreover, by the similar arguments in \eqref{Ku-bdn-1}, there hold
	\begin{equation*}
		\begin{aligned}
			\| \tfrac{\gamma \lambda}{\varepsilon^2} \| \nabla \partial_t^k [(\phi^2 - 1) \phi ] \| \leq C \sum_{0 \leq j \leq k} (1 + E_j (t)) E_j^\frac{1}{2} (t)
		\end{aligned}
	\end{equation*}
	and
	\begin{equation*}
		\begin{aligned}
			\| \gamma \nabla \partial_t^k [ \rho' (\phi) \tfrac{|u|^2}{2} ] \| \leq \tfrac{\delta}{3} \sum_{0 \leq j \leq k} \| \Delta \partial_t^j u \| + C_\delta \sum_{0 \leq j \leq k} ( 1 + E_j^4 (t) ) E_j^\frac{5}{2} (t) \,.
		\end{aligned}
	\end{equation*}
	Therefore, we have
	\begin{equation}\label{Kphi-2}
		\begin{aligned}
			\| \nabla \partial_t^k K_\phi \| \leq \delta \sum_{0 \leq j \leq k} \| \Delta \partial_t^j u \| + C_\delta \sum_{0 \leq j \leq k} ( 1 + E_j^4 (t) ) E_j^\frac{5}{2} (t) \,.
		\end{aligned}
	\end{equation}
	As a result, the second inequality in \eqref{Ku-Kphi-bnd} is concluded by \eqref{Kphi-1} and \eqref{Kphi-2}. Then the proof of Lemma \ref{Lmm-K} is completed.
\end{proof}

\subsection{$H^2$-estimates for ACNS equations \eqref{equ-1}}\label{Subsec:3_2}

We first derive the $H^2$-estimates of the ACNS system \eqref{equ-1}, which will contain the major structures of the energy functionals. More precisely, the following time differential inequality holds.

\begin{lemma}\label{Lmm-H2-t0}
	Let $(u, p, \phi)$ be a sufficiently smooth solution to \eqref{equ-1} over $(t,x) \in [0,T) \times \Omega$. Then there are constant $C > 0$ and small constant $\kappa > 0$ in the definition of $D_0 (t)$ of \eqref{EjDj} such that
	\begin{equation}\label{E0D0-bnd}
		\begin{aligned}
			\tfrac{d}{dt} E_0(t) + D_0(t) \leq C ( 1 + E_0^{12}(t) ) E_0(t)
		\end{aligned}
	\end{equation}
    for all $t \in [0,T)$, where the functionals $E_0 (t)$ and $D_0 (t)$ are defined in \eqref{EjDj}.
\end{lemma}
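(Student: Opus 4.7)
The plan is to build up $E_0(t)$ and $D_0(t)$ from three groups of estimates, each one producing part of the left-hand side. Throughout, I will write the momentum equation in the form
\begin{equation*}
\rho(\phi)\,u_t-\mu\Delta u+\nabla \tilde p=-K_u,\qquad \nabla\cdot u=0,\qquad u|_{\partial\Omega}=0,
\end{equation*}
after absorbing $\tfrac\lambda2\nabla|\nabla\phi|^2$ into a modified pressure, and the phase equation as $\phi_t-\gamma\lambda\Delta\phi=-K_\phi$, with $\partial_n\phi|_{\partial\Omega}=0$, so that the nonlinearities sit inside the vectors $K_u$ and $K_\phi$ of Lemma \ref{Lmm-K}.

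First I would run the \emph{basic energy estimates}. Testing the momentum equation against $u$ gives $\tfrac12\tfrac{d}{dt}\|u\|_{L^2_{\rho(\phi)}}^2+\mu\|\nabla u\|^2$ on the left, with a benign contribution $-\tfrac12\int\rho'(\phi)\phi_t|u|^2\,dx$ coming from $\partial_t\rho(\phi)$ (since $\tfrac{d}{dt}\int\rho(\phi)|u|^2/2=\int\rho(\phi)u\cdot u_t+\tfrac12\int\rho'(\phi)\phi_t|u|^2$). Testing the phase equation against $-\lambda\Delta\phi$ gives $\tfrac\lambda2\tfrac{d}{dt}\|\nabla\phi\|^2+\gamma\lambda^2\|\Delta\phi\|^2$ on the left, using $\partial_n\phi|_{\partial\Omega}=0$ to integrate by parts. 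The crucial cancellation is that the coupling term $\lambda\int\nabla\phi\Delta\phi\cdot u\,dx$ produced by $K_u$ in the first test exactly cancels against $-\lambda\int u\cdot\nabla\phi\,\Delta\phi\,dx$ produced by $K_\phi$ in the second. Testing the phase equation against $\phi$ adds $\tfrac12\tfrac{d}{dt}\|\phi\|^2+\gamma\lambda\|\nabla\phi\|^2$; this is the only place where $\|\phi\|^2$ and the first copy of $\gamma\lambda\|\nabla\phi\|^2$ enter $E_0$ and $D_0$ respectively. The remaining right-hand side contains $\gamma\lambda f'(\phi)\phi$, $\rho'(\phi)|u|^2\phi_t$, etc., which are handled by Sobolev embedding together with the crude bound $|\phi|\leq 1$ and the double-well structure of $f$.

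Next I would extract the \emph{parabolic-type dissipation} for the time derivatives. Testing the momentum equation with $u_t$ produces $\|u_t\|_{L^2_{\rho(\phi)}}^2+\tfrac\mu2\tfrac{d}{dt}\|\nabla u\|^2$, where the pressure term vanishes by $\nabla\cdot u_t=0$ and $u_t|_{\partial\Omega}=0$, and the convective term $\int\rho(\phi)(u\cdot\nabla u)\cdot u_t\,dx$ is controlled by Young's inequality and Lemma \ref{Lmm-Calculus}. Testing the phase equation with $\phi_t$ yields $\|\phi_t\|^2+\tfrac{\gamma\lambda}2\tfrac{d}{dt}\|\nabla\phi\|^2$. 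Finally, testing the phase equation with $-\Delta\phi_t$ and using $\partial_n\phi_t|_{\partial\Omega}=0$ in integration by parts gives $\|\nabla\phi_t\|^2+\tfrac{\gamma\lambda}2\tfrac{d}{dt}\|\Delta\phi\|^2=-\int\nabla K_\phi\cdot\nabla\phi_t\,dx$, whose right-hand side is absorbed via Young's inequality using the bound on $\|\nabla K_\phi\|$ from Lemma \ref{Lmm-K} with $k=0$.

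The third group is the \emph{elliptic (ADN) gain}, which supplies the $\kappa$-weighted terms $\|\Delta u\|^2$, $\|p\|_1^2$, $\|\nabla\Delta\phi\|^2$ in $D_0$. Viewing the momentum equation as the generalized Stokes system of Lemma \ref{Lmm-Stokes} with $m=0$, $p=2$, source $-\rho(\phi)u_t-K_u$ and $g=0$, $\phi=0$, one obtains $\|u\|_{H^2}+\|p\|_{H^1/\mathbb R}\lesssim \|u_t\|_{L^2_{\rho(\phi)}}+\|K_u\|+\|u\|$. Viewing the phase equation as the Neumann Poisson problem of Lemma \ref{Lmm-Laplace} with $l=1$, $p=2$, data $(\phi_t+K_\phi)/(\gamma\lambda)$ and $g=0$, one gets $\|\phi\|_{H^3}\lesssim \|\phi_t\|_1+\|K_\phi\|_1+\|\phi\|_1$. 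Multiply both elliptic bounds by a small constant $\kappa>0$ and add them to the previous estimates; the resulting $\kappa(\|u_t\|_{L^2_{\rho(\phi)}}^2+\|\phi_t\|_1^2)$ terms are absorbed by the already-produced dissipation, while the nonlinear contributions $\kappa(\|K_u\|^2+\|K_\phi\|_1^2)$ are controlled by Lemma \ref{Lmm-K}: choosing first $\kappa$ small and then $\delta$ small in Lemma \ref{Lmm-K}, the $\delta(\|\Delta\partial_t^j u\|+\|\nabla\Delta\partial_t^j\phi\|)$ terms are reabsorbed into the $\kappa$-parts of $D_0$, leaving only polynomial remainders of the form $C(1+E_0^{12})E_0$ after applying the crude bound $E_0^{5/2}\leq E_0\cdot E_0^{3/2}$, etc.

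The main obstacle I expect is the \emph{coefficient bookkeeping}: the same norms $\mu\|\nabla u\|^2$ and $\gamma\lambda\|\Delta\phi\|^2$ appear in both $E_0$ and $D_0$, so the energy-producing tests must be added with weights compatible with the dissipation-producing tests, and then $\kappa$ must be fixed small enough that the ADN terms do not overwhelm the basic dissipation while still delivering the $\|\Delta u\|$, $\|p\|_1$, $\|\nabla\Delta\phi\|$ bounds. A secondary nuisance is the mean-value ambiguity in Lemma \ref{Lmm-Stokes}, handled by normalizing $\int_\Omega p\,dx=0$ so that $\|p\|_{H^1/\mathbb R}=\|p\|_{H^1}$ up to a Poincaré-type constant.
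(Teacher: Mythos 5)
Your proposal is correct and follows essentially the same route as the paper: energy identities with the multipliers $u$, $u_t$, $\phi$, $\phi_t$, $\Delta\phi$, $\Delta\phi_t$, the nonlinearities controlled through Lemma \ref{Lmm-K} and Lemma \ref{Lmm-Calculus}, and the $\kappa$-weighted elliptic recovery of $\|\Delta u\|^2+\|p\|_1^2+\|\nabla\Delta\phi\|^2$ via Lemma \ref{Lmm-Stokes}, with the constants chosen in the same order ($\delta_1$, then $\kappa$, then $\delta$); the paper obtains $\|\nabla\Delta\phi\|$ by differentiating the solved-for identity $\Delta\phi=\Phi(u,\phi)$ rather than invoking Lemma \ref{Lmm-Laplace}, and it does not need your stress/transport cancellation since both coupling terms are estimated brute-force. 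The only detail to supply is that, having absorbed $\tfrac{\lambda}{2}|\nabla\phi|^2$ into a modified pressure, the term $\kappa\|p\|_1^2$ of $D_0$ in \eqref{EjDj} requires the additional bound $\| |\nabla\phi|^2 \|_1^2\lesssim E_0^{3/2}\|\nabla\Delta\phi\|^{3/2}$, which is absorbed by Young's inequality into the $\kappa\|\nabla\Delta\phi\|^2$ part of the dissipation.
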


\begin{proof}[Proof of Lemma \ref{Lmm-H2-t0}]
		
One first takes $L^2$-inner product of the first equation of \eqref{equ-1} by dot with $ u + u_t $. There thereby holds
    \begin{equation*}
		\begin{split}
		\int_\Omega \rho(\phi) u_t ( u + u_t ) dx & + \int_\Omega \rho(\phi) ( u \cdot \nabla u ) ( u + u_t) dx + \int_\Omega \nabla p ( u + u_t) dx
	\\ &	=  \mu \int_\Omega \Delta u ( u + u_t) dx - \lambda \int_\Omega \nabla \cdot (\nabla \phi \otimes \nabla \phi) ( u + u_t) dx.
		\end{split}
	\end{equation*}
By the boundary conditions \eqref{BC-ACNS} and \eqref{BC-HighTimeD}, one knows that
\begin{equation*}
	\begin{aligned}
		u|_{\partial \Omega} = 0 \,, \quad u_t |_{\partial \Omega} = 0 \,.
	\end{aligned}
\end{equation*}
Moreover, $\nabla \cdot u=0 $ implies $ \nabla \cdot u_t = (\nabla \cdot u)_t= 0 $. Thus, integrating by parts over $x\in \Omega $ reduces to
\begin{equation}\label{H2-1}
	\begin{split}
		\tfrac{1}{2} \tfrac{d}{dt} ( \| u \|_{ L_{\rho(\phi)}^2 }^2 + & \mu \| \nabla u \|^2 )  + \mu \|\nabla u\|^2 + \| u_t \|_{ L_{\rho(\phi)}^2 }^2 = \langle K_u, - u - u_t  \rangle + \tfrac{1}{2} \langle \rho'(\phi) \phi_t , |u|^2 \rangle \,,
	\end{split}
\end{equation}
where
\begin{equation}\label{Ku}
	\begin{aligned}
		K_u = \rho(\phi) u \cdot \nabla u + \lambda \nabla \phi \Delta\phi \,.
	\end{aligned}
\end{equation}

We then multiply the third equation of \eqref{equ-1} by $\phi + \phi_t $, which means that
\begin{equation*}
	\begin{split}
		\tfrac{1}{2} \tfrac{d}{dt} \| \phi \|^2 &  + \| \phi_t \|^2 + \int_\Omega u\cdot\nabla\phi (\phi + \phi_t ) dx  \\ & =  \int_\Omega  \gamma \lambda \Delta \phi ( \phi + \phi_t ) dx - \int_\Omega ( \gamma\lambda  f'(\phi) + \gamma \rho'(\phi)\tfrac{|u|^2}{2} ) (\phi + \phi_t ) dx .
	\end{split}
\end{equation*}
Integrating by parts over $x \in \Omega$ and the boundary value $ \tfrac{\partial \phi}{\partial \n} |_{\partial \Omega} =0 $ in \eqref{BC-ACNS} imply that
\begin{equation*}
	\begin{split}
		& \int_\Omega \Delta \phi \phi dx = -  \int_\Omega |\nabla \phi |^2 dx +  \int_{\partial \Omega} \phi \tfrac{\partial \phi}{\partial \n} dS = - \| \nabla \phi \|^2 \,,\\
		& \int_\Omega  \Delta \phi \phi_t dx = - \int_\Omega \nabla \phi \cdot \nabla \phi_t dx +  \int_{\partial \Omega} \tfrac{\partial \phi}{\partial \n} \phi_t dS = -\tfrac{1}{2} \tfrac{d}{dt}  \| \nabla \phi \|^2 \,. \\
	\end{split}
\end{equation*}
Thus, one gains
\begin{equation}\label{H2-2}
	\begin{aligned}
		\tfrac{1}{2} \tfrac{d}{d t} ( \| \phi \|^2 + \| \nabla \phi \|^2 ) + \| \phi_t \|^2 + \| \nabla \phi \|^2 = - \langle u \cdot \nabla \phi + \gamma\lambda  f'(\phi) + \gamma \rho'(\phi)\tfrac{|u|^2}{2} \,, \phi + \phi_t \rangle \,,
	\end{aligned}
\end{equation}
where
\begin{equation}\label{Kphi}
	\begin{aligned}
		K_\phi = u \cdot \nabla \phi + \gamma\lambda  f'(\phi) + \gamma \rho'(\phi)\tfrac{|u|^2}{2} \,.
	\end{aligned}
\end{equation}

Furthermore, from taking $L^2$-inner product of the third equation of \eqref{equ-1} by dot with $ \Delta \phi + \Delta \phi_t $, we deduce that
\begin{equation*}
	\begin{split}
		\int_\Omega & \phi_t (\Delta \phi + \Delta \phi_t) dx =  \tfrac{1}{2} \tfrac{d}{dt} \gamma \lambda \| \Delta \phi \|^2 + \gamma \lambda \| \Delta \phi \|^2 - \int_\Omega K_\phi (\Delta \phi + \Delta \phi_t) dx.
	\end{split}
\end{equation*}
The conditions \eqref{BC-HighTimeD} indicate that $\tfrac{\partial \phi_t}{\partial \n} |_{\partial \Omega} = 0$. Then the integration by parts over $x \in \Omega$ tells us
\begin{equation*}
	\begin{split}
		& \int_\Omega \phi_t \Delta \phi_t  dx = - \int_\Omega |\nabla \phi_t |^2 dx + \int_{\partial \Omega} \phi_t \tfrac{\partial \phi_t}{\partial \n} dx = - \| \nabla \phi_t \|^2 ,\\
		& \int_\Omega K_\phi \Delta \phi_t  dx  = - \int_\Omega \nabla K_\phi \cdot \nabla \phi_t  dx + \int_{\partial \Omega} K_\phi \tfrac{\partial \phi_t}{\partial \n} dS  = - \langle \nabla K_\phi \,, \nabla \phi_t  \rangle \,.
	\end{split}
\end{equation*}
Consequently, one has
\begin{equation}\label{H2-3}
	\begin{aligned}
		\tfrac{1}{2} \tfrac{d}{d t} \gamma \lambda \| \Delta \phi \|^2 + \| \nabla \phi_t \|^2 + \gamma \lambda \| \Delta \phi \|^2 = \langle K_\phi \,, \Delta \phi \rangle - \langle \nabla K_\phi \,, \nabla \phi_t \rangle \,.
	\end{aligned}
\end{equation}
Then, by the equalities \eqref{H2-1}, \eqref{H2-2} and \eqref{H2-3}, there holds
\begin{equation}\label{H2-4}
	\begin{aligned}
		& \tfrac{1}{2} \tfrac{d}{dt} ( \| u \|_{ L_{\rho(\phi)}^2 }^2 + \mu \| \nabla u \|^2 + \| \phi \|^2 + (\gamma \lambda+1) \| \nabla \phi \|^2 + \gamma \lambda \| \Delta \phi \|^2 ) \\
		& + \mu \|\nabla u\|^2 + \| u_t \|_{ L_{\rho(\phi)}^2 }^2 + \gamma \lambda \| \nabla \phi \|^2 + \| \phi_t \|^2 + \gamma \lambda \| \Delta \phi \|^2 + \| \nabla \phi_t \|^2 \\
		= & - \langle K_u , u_t + u \rangle + \langle K_\phi \,, \Delta \phi - \phi_t - \phi \rangle  - \langle \nabla K_\phi \,, \nabla \phi_t \rangle + \tfrac{1}{2} \langle \rho'(\phi) \phi_t , |u|^2 \rangle \,.
	\end{aligned}
\end{equation}
By the H\"older inequality, Lemma \ref{Lmm-K} and the bound $\| u_t \|^2 \leq c_1 \| u_t \|^2_{L^2_{\rho (\phi)}}$ with $c_1 = \tfrac{\rho_1 + \rho_2}{\rho_1 \rho_2} > 0$ derived by \eqref{Low-Bnd-rho-phi}, the first three terms in the right-hand side of \eqref{H2-4} can be bounded by
\begin{equation}\label{H2-5}
	\begin{aligned}
		& \| K_u \| ( \| u \| + \| u_t \|_{L^2_{\rho (\phi)}} ) + \| K_\phi \|_1 ( \| (\phi, \Delta \phi) \| + \| (\phi_t, \nabla \phi_t ) \| ) \\
		\leq & \delta_1 \| (\phi_t, \nabla \phi_t) \|^2 + c_1 \delta_1 \| u_t\|^2_{L^2_{\rho (\phi)}} + C_{\delta_1} ( \| K_u \|^2 + \| K_\phi \|_1^2 ) + \| (u, \phi, \Delta \phi) \|^2 \\
		\leq & \delta_1 \| (\phi_t, \nabla \phi_t) \|^2 + c_1 \delta_1 \| u_t\|^2_{L^2_{\rho (\phi)}} + 2 C_{\delta_1} \delta^2 \big( \| \Delta u \|^2 + \| \nabla \Delta \phi \|^2 \big) \\
		& + C E_0 (t) + 2 C_{\delta_1} C_\delta^2 (1 + E_0^4 (t))^2 E_0^5 (t) \,,
	\end{aligned}
\end{equation}
where the small $\delta_1 > 0$ is to be determined. Moreover, the last term in the right-hand side of \eqref{H2-4} can be controlled by
\begin{equation}\label{H2-6}
	\begin{aligned}
		\tfrac{1}{2} \langle \rho'(\phi) \phi_t , |u|^2 \rangle \leq & C \| \phi_t \|_{L^3} \| u \|^2_{L^3} \leq C \| \phi_t \|^\frac{1}{2} \| \nabla \phi_t \|^\frac{1}{2} \| u \|_{L^2_{\rho (\phi)}} \| \nabla u \| \\
		\leq & \delta_1 \| (\phi_t, \nabla \phi_t) \|^2 + C_{\delta_1} \| (u, \nabla u ) \|^4 \\
		\leq & \delta_1 \| (\phi_t, \nabla \phi_t) \|^2 + C_{\delta_1} E_0^2 (t) \,,
	\end{aligned}
\end{equation}
where Lemma \ref{Lmm-Calculus} has been utilized. Then, plugging \eqref{H2-5} and \eqref{H2-6} into \eqref{H2-4} reduces to
\begin{equation}\label{H2-7}
	\begin{aligned}
		& \tfrac{1}{2} \tfrac{d}{dt} ( \| u \|_{ L_{\rho(\phi)}^2 }^2 + \mu \| \nabla u \|^2 + \| \phi \|^2 + (\gamma \lambda+1) \| \nabla \phi \|^2 + \gamma \lambda \| \Delta \phi \|^2 ) \\ & + \mu \|\nabla u\|^2 + \| u_t \|_{ L_{\rho(\phi)}^2 }^2 + \gamma \lambda \| \nabla \phi \|^2 + \| \phi_t \|^2 + \gamma \lambda \| \Delta \phi \|^2 + \| \nabla \phi_t \|^2 \\
		\leq & 2 \delta_1 \| (\phi_t, \nabla \phi_t) \|^2 + c_1 \delta_1 \| u_t\|^2_{L^2_{\rho (\phi)}} + 2 C_{\delta_1} \delta^2 \big( \| \Delta u \|^2 + \| \nabla \Delta \phi \|^2 \big) \\
		& + \tilde{C}_{\delta, \delta_1} (1 + E_0^{12} (t)) E_0 (t)
	\end{aligned}
\end{equation}
for small $\delta, \delta_1 > 0$ to be determined.

Remark that the differential inequality \eqref{H2-7} is still not closed, due to the uncontrolled quantity $2 C_{\delta_1} \delta^2 \big( \| \Delta u \|^2 + \| \nabla \Delta \phi \|^2 \big)$ in the right-hand side of \eqref{H2-7}. In order to control this quantity, our way is to employ Lemma \ref{Lmm-Stokes}, a corollary of ADN theory in Proposition \ref{ADN-theory}, and the $\phi$-equation of \eqref{equ-1}.

From the $u$-equation of \eqref{equ-1} and \eqref{BC-ACNS}, one has
\begin{equation}
	\begin{aligned}
		- \mu \Delta u + \nabla p = & U(u, \phi) \quad \ \textrm{in } \Omega \,, \\
		\nabla \cdot u = & 0 \qquad \qquad \textrm{in } \Omega \,, \\
		u = & 0 \qquad \qquad \textrm{on } \partial \Omega \,,
	\end{aligned}
\end{equation}
where
\begin{equation}\label{U-u-phi}
	\begin{aligned}
		U(u, \phi) = - \rho (\phi) (u_t + u \cdot \nabla u) - \lambda \nabla \cdot ( \nabla \phi \otimes \nabla \phi ) \,.
	\end{aligned}
\end{equation}
Then the inequality \eqref{Stokes-Bnd} in Lemma \ref{Lmm-Stokes} indicates that
\begin{equation}\label{Delta-u-bnd-1}
	\begin{aligned}
		\| \Delta u \|^2 + \| p \|_1^2 \leq & \| u \|^2_2 + \| p \|^2_1 \leq c_0^2 \| U (u, \phi) \|^2 \\
		\leq & \tfrac{1}{2} c_1 \| u_t \|^2_{L^2_{\rho (\phi)}} + C \| u \|_{L^4}^2 \| \nabla u \|^2_{L^4} + C \| \Delta \phi \|^2_{L^4} \| \nabla \phi \|^2_{L^4} \,,
	\end{aligned}
\end{equation}
where $c_1 = 6 c_0^2 \max\{\rho_1, \rho_2\} > 0$ and the last inequality is derived from \eqref{U-u-phi}, \eqref{Low-Bnd-rho-phi} and the H\"older inequality. Recalling the definition of $E_0 (t)$ in \eqref{EjDj}, one derives from \eqref{Delta-u-bnd-1} and Lemma \ref{Lmm-Calculus} that
\begin{equation}\label{Delta-u-bnd}
	\begin{aligned}
		\| \Delta u \|^2 + \| p \|^2_1 \leq & \tfrac{1}{2} c_1 \| u_t \|^2_{L^2_{\rho (\phi)}} + C \| \Delta u \|^\frac{3}{2} E_0^\frac{5}{4} (t) + \| \nabla \Delta \phi \|^\frac{3}{2} E_0^\frac{5}{4} (t) \\
		\leq & \tfrac{1}{2} c_1 \| u_t \|^2_{L^2_{\rho (\phi)}} + \tfrac{1}{4} \| \Delta u \|^2 + \tfrac{1}{2} \| \nabla \Delta \phi \|^2 + C E_0^5 (t) \,.
	\end{aligned}
\end{equation}

Next one focuses on the uncontrolled quantity $\| \nabla \Delta \phi \|^2$. In order to deal with it, one will employ the $\phi$-equation in \eqref{equ-1}. More precisely,
\begin{equation*}
	\begin{aligned}
		\Delta \phi = \Phi (u, \phi) : = \tfrac{1}{\gamma \lambda} (\phi_t + u \cdot \nabla \phi) + \big[ f' (\phi) + \tfrac{1}{\lambda} \rho' (\phi) \tfrac{|u|^2}{2} \big] \,,
	\end{aligned}
\end{equation*}
which implies that
\begin{equation}\label{ND-phi-1}
	\begin{aligned}
		\| \nabla \Delta \phi \|^2 \leq \tfrac{1}{2} c_3 \| \nabla \phi_t \|^2 + C \| \nabla (u \cdot \nabla \phi) \|^2 + C \| \nabla f' (\phi) \|^2 + C \| \nabla [ \rho' (\phi) \tfrac{|u|^2}{2} ] \|^2 \,,
	\end{aligned}
\end{equation}
where $c_3 = \tfrac{8}{\gamma \lambda} > 0$. By Lemma \ref{Lmm-Calculus}, there further hold
\begin{equation}\label{ND-phi-2}
	\begin{aligned}
		\| \nabla (u \cdot \nabla \phi) \|^2 \lesssim & \| \nabla u \nabla \phi \|^2 + \| u \nabla^2 \phi \|^2 \lesssim \| \nabla u \|^2_{L^4} \| \nabla \phi \|^2_{L^4} + \| u \|^2_{L^\infty} \| \nabla^2 \phi \|^2 \\
		\lesssim & \| \nabla u \|^\frac{1}{2} \| \Delta u \|^\frac{3}{2} ( \| \nabla \phi \|^\frac{1}{2} \| \Delta \phi \|^\frac{3}{2} + \| \nabla \phi \|^2 ) + \| \Delta u \| \| \nabla u \| \| \Delta \phi \|^2 \\
		\lesssim & \| \Delta u \|^\frac{3}{2} E_0^\frac{5}{4} (t) + \| \Delta u \| E_0^\frac{3}{2} (t) \,.
	\end{aligned}
\end{equation}
Moreover, by \eqref{equ-2} and $-1 \leq \phi \leq 1$, one easily knows
\begin{equation}\label{ND-phi-3}
	\begin{aligned}
		\| \nabla f' (\phi) \|^2 \lesssim \| \nabla \phi \|^2 \lesssim E_0 (t) \,.
	\end{aligned}
\end{equation}
Furthermore, it infers from Lemma \ref{Lmm-Calculus} that
\begin{equation}\label{ND-phi-4}
	\begin{aligned}
		\| \nabla [ \rho' (\phi) \tfrac{|u|^2}{2} ] \|^2 \lesssim & \| \nabla \phi \|_{L^3}^2 \| u \|^4_{L^3} + \| \nabla u \|^2_{L^4} \| u \|^2_{L^4} \\
		\lesssim & \| u \|^2 \| \nabla u \|^2 ( \| \nabla \phi \| \| \Delta \phi \| + \| \nabla \phi \|^2 ) + \| \nabla u \|^\frac{1}{2} \| \Delta u \|^\frac{3}{2} \| u \|^\frac{1}{2} \| \nabla u \|^\frac{3}{2} \\
		\lesssim & E_0^3 (t) + \| \Delta u \|^\frac{3}{2} E_0^\frac{5}{4} (t) \,.
	\end{aligned}
\end{equation}
From substituting the bounds \eqref{ND-phi-2}, \eqref{ND-phi-3} and \eqref{ND-phi-4} into \eqref{ND-phi-1}, it thereby infers that
\begin{equation}\label{ND-phi-bnd}
	\begin{aligned}
		\| \nabla \Delta \phi \|^2 \leq & \tfrac{1}{2} c_3 \| \nabla \phi_t \|^2 + C \| \Delta u \|^\frac{3}{2} E_0^\frac{5}{4} (t) + C \| \Delta u \| E_0^\frac{3}{2} (t) + C E_0 (t) + C E_0^3 (t) \\
		\leq & \tfrac{1}{2} c_3 \| \nabla \phi_t \|^2 + \tfrac{1}{4} \| \Delta u \|^2 + C ( 1 + E_0^4 (t) ) E_0 (t) \,.
	\end{aligned}
\end{equation}
Then, \eqref{Delta-u-bnd} and \eqref{ND-phi-bnd} imply that
\begin{equation}\label{H2-8}
	\begin{aligned}
		\| \Delta u \|^2 + \| \nabla \Delta \phi \|^2 + \| p \|^2_1 \leq c_2 \| u_t \|^2_{L^2_{\rho (\phi)}} + c_3 \| \nabla \phi_t \|^2  + C (1 + E_0^4 (t)) E_0 (t) \,.
	\end{aligned}
\end{equation}

Now, in \eqref{H2-7}, we first take $\delta_1 > 0$ such that $c_1 \delta_1 \leq \tfrac{1}{4}$ and $2 \delta_1 \leq \tfrac{1}{4}$. Then we choose a constant $\kappa > 0$ such that $\kappa c_2 \leq \tfrac{1}{2}$ and $\kappa c_3 \leq \tfrac{1}{4}$. We finally take $\delta > 0$ in \eqref{H2-7} such that $2 C_{\delta_1} \delta^2 = \tfrac{1}{2} \kappa > 0$. Therefore, from adding \eqref{H2-7} to the $\kappa$ times of \eqref{H2-8}, one gains
\begin{equation*}
	\begin{aligned}
		& \tfrac{1}{2} \tfrac{d}{dt} \big( \| u \|_{ L_{\rho(\phi)}^2 }^2 + \mu \| \nabla u \|^2 + \| \phi \|^2 + (\gamma \lambda+1) \| \nabla \phi \|^2 + \gamma \lambda \| \Delta \phi \|^2 \big) + \mu \|\nabla u\|^2 + \tfrac{1}{2} \| u_t \|_{ L_{\rho(\phi)}^2 }^2 \\
		& + \gamma \lambda \| \nabla \phi \|^2 +  \tfrac{1}{2} \| \phi_t \|^2 + \gamma \lambda \| \Delta \phi \|^2 + \tfrac{1}{2} \| \nabla \phi_t \|^2 +  \tfrac{\kappa}{2} \|\Delta u\|^2 + \tfrac{\kappa}{2} \| \nabla \Delta \phi \|^2 + \kappa \| p \|^2_1 \\
        & \leq C (1 + E_0^{12} (t)) E_0 (t) \,,
	\end{aligned}
\end{equation*}
which concludes the inequality \eqref{E0D0-bnd}. Therefore, the proof of Lemma \ref{Lmm-H2-t0} is completed.
\end{proof}

\subsection{$H^2$-estimates for $\partial_t^k$-derivatives of ACNS system with integers $k \geq 1$}\label{Subsec:3_3}

Note that the functionals $E_0 (t)$ and $D_0 (t)$ in the a priori estimate in Lemma \ref{Lmm-H2-t0} only involve the second order spatial derivatives of $u$ and the third order spatial derivatives of $\phi$. In order to investigate the information of the higher order spatial derivatives of $(u, \phi)$, we initially dominate the higher order time derivatives of $(u, \phi)$. Then we control the higher order spatial derivative by employing the ADN theory and the structures of the ACNS system. More precisely, the following differential inequality holds.

\begin{lemma}\label{Lmm-H2-tk}
	Let $(u, p, \phi)$ be a sufficiently smooth solution to \eqref{equ-1} over $(t,x) \in [0, T) \times \Omega$. Then there are constant $C > 0$ and small $\kappa > 0$ in the definition of $D_k (t)$ of \eqref{EjDj} such that
	\begin{equation}\label{EkDk-bnd}
		\begin{aligned}
			\tfrac{d}{d t} E_k (t) + D_k (t) \leq C \sum_{0 \leq j \leq k-1} ( 1 + E_j^2 (t) ) D_j (t) + C \sum_{0 \leq j \leq k} (1 + E_j^{12} (t) ) E_j (t)
		\end{aligned}
	\end{equation}
    for all $t \in [0,T)$, where the functionals $E_j (t)$ and $D_j (t)$ for $0 \leq j \leq k$ are defined in \eqref{EjDj}.
\end{lemma}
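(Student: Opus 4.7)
The plan is to mirror the argument of Lemma \ref{Lmm-H2-t0} after applying $\partial_t^k$ to the system \eqref{equ-1}. First, I would differentiate the momentum equation $k$ times in $t$, writing
$$ \partial_t^k [\rho(\phi) u_t] = \rho(\phi)\,\partial_t^k u_t + \sum_{j=1}^k \binom{k}{j}\partial_t^j\rho(\phi)\,\partial_t^{k-j}u_t,$$
and take the $L^2$-inner product against $\partial_t^k u + \partial_t^k u_t$. The leading piece produces $\tfrac{1}{2}\tfrac{d}{dt}\|\partial_t^k u\|_{L^2_{\rho(\phi)}}^2+\|\partial_t^k u_t\|_{L^2_{\rho(\phi)}}^2$ up to a $\tfrac12\int\rho'(\phi)\phi_t|\partial_t^k u|^2\,dx$ term, and the $-\mu\Delta\partial_t^k u$ term, after integration by parts using $\partial_t^k u|_{\partial\Omega}=0$ from \eqref{BC-HighTimeD}, gives $\tfrac{\mu}{2}\tfrac{d}{dt}\|\nabla\partial_t^k u\|^2+\mu\|\nabla\partial_t^k u\|^2$. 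Analogously I would differentiate the $\phi$-equation $k$ times and pair it with $\partial_t^k\phi+\partial_t^k\phi_t$ and with $\Delta\partial_t^k\phi+\Delta\partial_t^k\phi_t$; the boundary condition $\tfrac{\partial}{\partial\mathbf n}\partial_t^k\phi|_{\partial\Omega}=0$ from \eqref{BC-HighTimeD} is exactly what allows the two integrations by parts that generate the good terms $\|\nabla\partial_t^k\phi\|^2$, $\|\Delta\partial_t^k\phi\|^2$ and $\|\nabla\partial_t^k\phi_t\|^2$ in $D_k(t)$.

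Next I would bound the right-hand sides through a $\partial_t^k$-version of Lemma \ref{Lmm-K}. Expanding $\partial_t^k K_u$ and $\partial_t^k K_\phi$ by Leibniz and estimating $\|\partial_t^a\rho(\phi)\|_{L^\infty}$ via Lemma \ref{Lmm-Calculus} exactly as in the proof of Lemma \ref{Lmm-K}, one obtains
$$\|\partial_t^k K_u\|+\|\partial_t^k K_\phi\|_1 \leq \delta \sum_{0\leq j\leq k}\bigl(\|\Delta \partial_t^j u\|+\|\nabla\Delta \partial_t^j\phi\|\bigr) + C_\delta \sum_{0\leq j\leq k}(1+E_j^4(t))E_j^{5/2}(t).$$
The commutator sum $\sum_{j=1}^k\binom{k}{j}\partial_t^j\rho(\phi)\,\partial_t^{k-j}u_t$ paired with $\partial_t^k u+\partial_t^k u_t$, together with the analogous commutators from $\partial_t^k(u\cdot\nabla\phi)$, $\partial_t^k f'(\phi)$ and $\partial_t^k(\rho'(\phi)|u|^2/2)$, generate terms carrying the factor $\|\partial_t^{k-j}u_t\|_{L^2_{\rho(\phi)}}$ or $\|\partial_t^{k-j}\phi_t\|_1$ for some $j\geq 1$. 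Since these squared factors are exactly components of $D_{k-j}(t)$ with $k-j\leq k-1$, a Cauchy--Schwarz/Young split allows them to be absorbed into $\sum_{j\leq k-1}(1+E_j^2(t))D_j(t)$, which is precisely the first sum on the right of \eqref{EkDk-bnd}.

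The energy identity obtained so far still leaves $\|\Delta\partial_t^k u\|^2$, $\|\nabla\Delta\partial_t^k\phi\|^2$ and $\|\partial_t^k p\|_1^2$ uncontrolled, so I would close the estimate exactly as in the analysis leading to \eqref{Delta-u-bnd}--\eqref{ND-phi-bnd}. Applying Lemma \ref{Lmm-Stokes} to
$$-\mu\Delta\partial_t^k u+\nabla\partial_t^k p=\partial_t^k U(u,\phi),\qquad \nabla\cdot\partial_t^k u=0,\qquad \partial_t^k u|_{\partial\Omega}=0,$$
with $U(u,\phi)$ as in \eqref{U-u-phi}, and Lemma \ref{Lmm-Laplace} to $\Delta\partial_t^k\phi=\partial_t^k\Phi(u,\phi)$ with $\tfrac{\partial}{\partial\mathbf n}\partial_t^k\phi|_{\partial\Omega}=0$, I control these three quantities by $\|\partial_t^k u_t\|_{L^2_{\rho(\phi)}}^2$, $\|\nabla\partial_t^k\phi_t\|^2$, the lower-order dissipations and the polynomial energy factors. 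Choosing small parameters $\delta_1$ first, then the weight $\kappa>0$ in $D_k(t)$ small enough, and finally $\delta$ comparable to $\sqrt{\kappa}$, exactly as in the end of the proof of Lemma \ref{Lmm-H2-t0}, the resulting differential inequality closes into the form \eqref{EkDk-bnd}.

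The hard part will be the careful bookkeeping of which index receives the top-order time derivative in the Leibniz expansions of $\partial_t^k[\rho(\phi)u_t]$, $\partial_t^k(\rho(\phi)u\cdot\nabla u)$, $\partial_t^k(\nabla\phi\,\Delta\phi)$, $\partial_t^k f'(\phi)$ and $\partial_t^k(\rho'(\phi)|u|^2/2)$: one must arrange the estimates so that whenever a factor of the form $\|\partial_t^{k-j}u_t\|$ or $\|\partial_t^{k-j}\phi_t\|_1$ with $j\geq 1$ appears, it is placed into $(1+E_j^2)D_j$ with $j\leq k-1$ and never into a term of the form $(1+E_k^{12})D_k$ (which would destroy the closure), while pure products of lower-order time derivatives without any $\partial_t\cdot_t$-factor are absorbed into the energy sum $(1+E_j^{12})E_j$. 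The structural separation between the two sums on the right of \eqref{EkDk-bnd} is dictated precisely by this distinction.
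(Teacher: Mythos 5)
Your proposal is correct and follows essentially the same route as the paper: apply $\partial_t^k$, test with $\partial_t^k u+\partial_t^k u_t$ and $\partial_t^k\phi+\partial_t^k\phi_t\pm\Delta(\cdots)$ using the boundary conditions \eqref{BC-HighTimeD}, invoke Lemma \ref{Lmm-K} (already stated for general $k$), close the missing terms $\|\Delta\partial_t^k u\|^2$, $\|\nabla\Delta\partial_t^k\phi\|^2$, $\|\partial_t^k p\|_1^2$ via the ADN/Stokes estimate and the $\phi$-equation, and cascade the small constants $\delta$, $\kappa$. The only cosmetic difference is that you place the commutator factors $\|\partial_t^{k-j}u_t\|$, $\|\partial_t^{k-j}\phi_t\|_1$ ($j\ge 1$) into the lower-order dissipation sum, whereas the paper mostly rewrites them as energies of index at most $k$; both bookkeepings are consistent with \eqref{EkDk-bnd}.
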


\begin{proof}
For $k \geq 1$, by applying $\partial_t^k$ to \eqref{equ-1} and combining with the boundary conditions \eqref{BC-HighTimeD}, one gains
\begin{equation}\label{k-equ}
	\left\{
	    \begin{aligned}
	    	\rho (\phi) \partial_t^k u_t + \sum_{1 \leq j \leq k} C_k^j \partial_t^j \rho (\phi) \partial_t^{k-j} u_t + \partial_t^k \big[ \rho (\phi) (u \cdot \nabla u) \big] + \nabla \partial_t^k p \\
	    	= \mu \Delta \partial_t^k u - \lambda \nabla \cdot \partial_t^k ( \nabla \phi \otimes \nabla \phi ) \,, \\[2mm]
	    	\nabla \cdot \partial_t^k u = 0 \,, \\[2mm]
	    	\partial_t^k \phi_t + \partial_t^k (u \cdot \nabla \phi) = \gamma \lambda \Delta \partial_t^k \phi - \gamma \lambda \partial_t^k f' (\phi) - \gamma \partial_t^k \big[ \rho' (\phi) \tfrac{|u|^2}{2} \big] \,, \\[2mm]
	    	\partial_t^k u |_{\partial \Omega} = 0 \,, \quad \tfrac{\partial}{\partial \n} \partial_t^k \phi |_{\partial \Omega} = 0 \,.
	    \end{aligned}
	\right.
\end{equation}
We take $L^2$ inner prouduct of the equation \eqref{k-equ}$_1$ with $\partial_t^k u + \partial_t^k u_t$. Then there holds
\begin{equation*}
	\begin{aligned}
		\int_\Omega \rho (\phi) \partial_t^k u_t ( \partial_t^k u + \partial_t^k u_t ) d x + \int_\Omega \partial_t^k \big[ \rho (\phi) (u \cdot \nabla u) \big] ( \partial_t^k u + \partial_t^k u_t ) d x \\
		+ \int_\Omega \nabla \partial_t^k p ( \partial_t^k u + \partial_t^k u_t ) d x = \mu \int_\Omega \Delta \partial_t^k u ( \partial_t^k u + \partial_t^k u_t ) d x \\
		- \lambda \int_\Omega \nabla \cdot \partial_t^k ( \nabla \phi \otimes \nabla \phi ) ( \partial_t^k u + \partial_t^k u_t ) d x - \sum_{1 \leq j \leq k} C_k^j \int_\Omega \partial_t^j \rho (\phi) \partial_t^{k-j} u_t ( \partial_t^k u + \partial_t^k u_t ) d x \,.
	\end{aligned}
\end{equation*}
By the boundary conditions in \eqref{k-equ}, one has
\begin{equation*}
	\begin{aligned}
		\partial_t^k u |_{\partial \Omega} = \partial_t^k u_t |_{\partial \Omega} = 0 \,.
	\end{aligned}
\end{equation*}
Moreover, $\nabla \cdot \partial_t^k u = 0$ implies $\nabla \cdot \partial_t^k u_t = 0$. It thereby infers from integrating by parts over $x \in \Omega$ that
\begin{equation*}
	\begin{aligned}
		& \tfrac{1}{2} \tfrac{d}{d t} \big( \| \partial_t^k u \|^2_{L^2_{\rho (\phi)}} + \mu \| \nabla \partial_t^k u \|^2 \big) + \| \partial_t^k u_t \|^2_{L^2_{\rho (\phi)}} + \mu \| \nabla \partial_t^k u \|^2 \\
		= & - \langle \partial_t^k K_u , \partial_t^k u + \partial_t^k u_t \rangle + \tfrac{1}{2} \langle \rho' (\phi) \phi_t , |\partial_t^k u|^2 \rangle - \sum_{1 \leq j \leq k} C_k^j \langle \partial_t^j \rho (\phi) \partial_t^{k-j} u_t ,, \partial_t^k u + \partial_t^k u_t \rangle \,,
	\end{aligned}
\end{equation*}
where $K_u$ is given in \eqref{Ku}. Then, in the $\phi$-equation of \eqref{k-equ}, and then take $L^2$-inner product with $\partial_t^k \phi + \partial_t^k \phi_t + \Delta \partial_t^k \phi + \Delta \partial_t^k \phi_t$. Together with the boundary conditions $\tfrac{\partial}{\partial \n } \partial_t^k \phi |_{\partial \Omega} = \tfrac{\partial}{\partial \n } \partial_t^k \phi_t |_{\partial \Omega} = 0$, integrating by parts over $x \in \Omega$ yields that
\begin{equation*}
	\begin{aligned}
		& \tfrac{1}{2} \tfrac{d}{dt} \big( \| \partial_t^k \phi \|^2 + ( \gamma \lambda + ) \| \nabla \partial_t^{k}\phi \|^2 + \gamma \lambda\| \Delta \partial_t^{k} \phi \|^2 \big) \\
		& + \gamma \lambda \| \nabla \partial_t^{k} \phi \|^2 + \| \partial_t^{k} \phi_t \|^2 + \gamma \lambda \| \Delta \partial_t^{k} \phi \|^2 + \| \nabla \partial_t^{k} \phi_t \|^2 \\
        = & \langle \partial_t^{k} K_\phi, \Delta \partial_t^{k} \phi - \partial_t^{k} \phi - \partial_t^{k} \phi_t \rangle - \langle \nabla \partial_t^k K_\phi , \nabla \partial_t^k \phi_t \rangle \,,
	\end{aligned}
\end{equation*}	
where $K_\phi$ is given in \eqref{Kphi}. Consequently, there holds
\begin{equation}\label{H2tk-1}
	\begin{aligned}
		& \tfrac{1}{2} \tfrac{d}{dt} \big( \| \partial_t^{k} u \|_{L^2_{\rho(\phi)}}^2 + \mu \| \nabla \partial_t^{k} u \|^2 + \| \partial_t^{k} \phi \|^2 + ( \gamma \lambda + 1 ) \| \nabla \partial_t^{k}\phi \|^2 + \gamma\lambda\| \Delta \partial_t^{k} \phi \|^2 \big) \\
		& + \mu\| \nabla \partial_t^{k} u \|^2 +\| \partial_t^{k} u_t \|_{L^2_{\rho(\phi)}}^2 + \gamma \lambda \| \nabla \partial_t^{k} \phi \|^2 + \| \partial_t^{k} \phi_t \|^2 + \gamma \lambda \| \Delta \partial_t^{k} \phi \|^2 + \| \nabla \partial_t^{k} \phi_t \|^2 \\
		= & \underbrace{ - \langle \partial_t^k K_u , \partial_t^k u + \partial_t^k u_t \rangle + \langle \partial_t^{k} K_\phi, \Delta \partial_t^{k} \phi - \partial_t^{k} \phi - \partial_t^{k} \phi_t \rangle - \langle \nabla \partial_t^k K_\phi , \nabla \partial_t^k \phi_t \rangle }_{A_1} \\
		& + \underbrace{ \tfrac{1}{2} \langle \rho' (\phi) \phi_t , |\partial_t^k u|^2 \rangle }_{A_2} \ \underbrace{ - \sum_{1 \leq j \leq k} C_k^j \langle \partial_t^j \rho (\phi) \partial_t^{k-j} u_t \,, \partial_t^k u + \partial_t^k u_t \rangle }_{A_3} \,.
	\end{aligned}
\end{equation}

By the similar arguments in \eqref{H2-5}, the term $A_1$ in the right-hand side of \eqref{H2tk-1} can be bounded by
\begin{equation}\label{A1}
	\begin{aligned}
		A_1 \leq \delta_2 \| (\partial_t^k \phi_t, \nabla \partial_t^k \phi_t) \|^2 + c_1 \delta_2 \| \partial_t^k u_t \|^2_{L^2_{\rho (\phi)}} + 2 C_{\delta_2} \delta^2 ( \| \Delta \partial_t^k u \|^2 + \| \nabla \Delta \partial_t^k \phi \|^2 ) \\
		+ \tilde{C}_{\delta_2} \delta^2 \sum_{0 \leq j \leq k-1} ( \| \Delta \partial_t^j u \|^2 + \| \nabla \Delta \partial_t^j \phi \|^2 ) + \tilde{C}_{\delta, \delta_2} \sum_{0 \leq j \leq k} (1 + E_j^{12} (t)) E_j (t)
	\end{aligned}
\end{equation}
for any small $\delta, \delta_2 > 0$. Moreover, the same arguments in \eqref{H2-6} imply that
\begin{equation}\label{A2}
	\begin{aligned}
		A_2 \leq \delta_2' \| (\phi_t, \nabla \phi_t) \|^2 + C_{\delta_2'} E_k^2 (t) \leq \delta_2' D_0 (t) + C_{\delta_2'} E_k^2 (t)
	\end{aligned}
\end{equation}
for small $\delta_2' > 0$ to be determined.

It remains to control the quantity $A_3$ in \eqref{H2tk-1}. By the H\"older inequality and the definition of $E_j (t)$ in \eqref{EjDj}, one has
\begin{equation*}
	\begin{aligned}
		A_3 \leq & C \sum_{1 \leq j \leq k} \| \partial_t^j \rho (\phi) \|_{L^\infty} \| \partial_t^{k-j} u \| \big(\| \partial_t^k u \| + \| \partial_t^k u_t \| \big) \\
		\leq & C \sum_{1 \leq j \leq k} \| \partial_t^j \rho (\phi) \|_{L^\infty} E_{k-j}^\frac{1}{2} (t) \big(E_k^\frac{1}{2} (t) + \| \partial_t^k u_t \|_{L^2_{\rho (\phi)}} \big) \,.
	\end{aligned}
\end{equation*}
Recalling the definition of $\rho (\phi)$ in \eqref{rho-def} and Lemma \ref{Lmm-Calculus}, one has
\begin{equation*}
	\begin{aligned}
		\| \partial_t^j \rho (\phi) \|_{L^\infty} \leq & C \sum_{0 \leq a \leq j} ( 1 + \| \partial_t^a \phi \|_{L^\infty} ) \| \partial_t^a \phi \|_{L^\infty} \\
		\leq & C \sum_{0 \leq a \leq j} ( 1 + \| \partial_t^a \phi \|^\frac{1}{4} \| \Delta \partial_t^a \phi \|^\frac{3}{4} + \| \partial_t^a \phi \| ) ( \| \partial_t^a \phi \|^\frac{1}{4} \| \Delta \partial_t^a \phi \|^\frac{3}{4} + \| \partial_t^a \phi \| ) \\
		\leq & C \sum_{0 \leq a \leq j} ( 1 + E_a^\frac{1}{2} (t) ) E_a^\frac{1}{2} (t) \,.
	\end{aligned}
\end{equation*}
It therefore infers that
\begin{equation}\label{A3}
	\begin{aligned}
		A_3 \leq & C \sum_{1 \leq j \leq k} \sum_{0 \leq a \leq j} ( 1 + E_a^\frac{1}{2} (t) ) E_a^\frac{1}{2} (t) E_{k-j}^\frac{1}{2} (t) \big(E_k^\frac{1}{2} (t) + \| \partial_t^k u_t \|_{L^2_{\rho (\phi)}} \big) \\
		\leq & c_1 \delta_2 \| \partial_t^k u_t \|^2_{L^2_{\rho (\phi)}} + C_{\delta_2} \sum_{0 \leq j \leq k} (1 + E_j^2 (t)) E_j (t)
	\end{aligned}
\end{equation}
for and small $\delta_2 > 0$. From substituting \eqref{A1}, \eqref{A2} and \eqref{A3} into \eqref{H2tk-1}, it is derived that
\begin{equation}\label{H2tk-2}
	\begin{aligned}
		& \tfrac{1}{2} \tfrac{d}{dt} \big( \| \partial_t^{k} u \|_{L^2_{\rho(\phi)}}^2 + \mu \| \nabla \partial_t^{k} u \|^2 + \| \partial_t^{k} \phi \|^2 + ( \gamma \lambda + 1 ) \| \nabla \partial_t^{k}\phi \|^2 + \gamma\lambda\| \Delta \partial_t^{k} \phi \|^2 \big) \\
		& + \mu\| \nabla \partial_t^{k} u \|^2 +\| \partial_t^{k} u_t \|_{L^2_{\rho(\phi)}}^2 + \gamma \lambda \| \nabla \partial_t^{k} \phi \|^2 + \| \partial_t^{k} \phi_t \|^2 + \gamma \lambda \| \Delta \partial_t^{k} \phi \|^2 + \| \nabla \partial_t^{k} \phi_t \|^2 \\
		\leq & \delta_2 \| ( \partial_t^k \phi_t , \nabla \partial_t^k \phi_t ) \|^2 + 2 c_1 \delta_2 \| \partial_t^k u_t \|^2_{L^2_{\rho (\phi)}} + 2 C_{\delta_2} \delta^2 ( \| \Delta \partial_t^k u \|^2 + \| \nabla \Delta \partial_t^k \phi \|^2 ) \\
		& + \delta_2' D_0 (t) + C_{\delta, \delta_2, \delta_2'} \sum_{0 \leq j \leq k} ( 1 + E_j^{12} (t) ) E_j (t) \,.
	\end{aligned}
\end{equation}

Next we turn to control the quantity $\| \Delta \partial_t^k u \|^2 + \| \nabla \Delta \partial_t^k \phi \|^2$ by using the ADN theory and the constitutive of the equations \eqref{k-equ}. From the $u$-equation of \eqref{k-equ}, one has
\begin{equation}
	\begin{aligned}
		- \mu \Delta \partial_t^k u + \nabla \partial_t^k p = & U_k (u, \phi) \quad \ \textrm{in } \Omega \,, \\
		\nabla \cdot \partial_t^k u = & 0 \qquad \qquad \ \textrm{in } \Omega \,, \\
		\partial_t^k u = & 0 \qquad \qquad \ \textrm{on } \partial \Omega \,,
	\end{aligned}
\end{equation}
where
\begin{equation}\label{Uk-u-phi}
	\begin{aligned}
		U_k(u, \phi) = - \partial_t^k \big[ \rho (\phi) ( u_t + u \cdot \nabla u ) \big] - \lambda \nabla \cdot \partial_t^k ( \nabla \phi \otimes \nabla \phi ) \,.
	\end{aligned}
\end{equation}
By the ADN theory in Lemma \ref{Lmm-Stokes}, one derives that
\begin{equation*}
	\begin{aligned}
		\| \Delta \partial_t^k u \|^2 + \| \partial_t^k p \|^2 \leq c_0^2 \| U_k (\phi, u) \|^2 \,.
	\end{aligned}
\end{equation*}
Employing the Sobolev embedding theory in Lemma \ref{Lmm-Calculus} and the bounds \eqref{Low-Bnd-rho-phi} of $\rho (\phi)$, one implies that
	\begin{align*}
		\| U_k (u, \phi) \|^2 \leq & \tfrac{1}{2} \rho_2 \| \partial_t^k u_t \|^2_{L^2_{\rho (\phi)}} + \tfrac{1}{4 c_0^2} \| \Delta \partial_t^k u \|^2 + \tfrac{1}{4 c_0^2} \| \nabla \Delta \partial_t^k \phi \|^2 \\
		+ & C \sum_{0 \leq j \leq k-1} \Big[ \| \Delta \partial_t^j u \|^2 + \| \nabla \Delta \partial_t^j \phi \|^2 \\
		& \qquad \qquad \qquad + ( 1 + \| \partial_t^j \phi \|^4 + \| \Delta \partial_t^j \phi \|^4 ) \| \partial_t^j u_t \|^2_{L^2_{\rho (\phi)}} \Big] \\
		& + C \sum_{0 \leq j \leq k} \Big[ ( 1 + \| \partial_t^j \phi \|^{16} + \| \Delta \partial_t^j \phi \|^{16} ) ( \| \partial_t^j u \|^{10}_{L^2_{\rho (\phi)}} + \| \nabla \partial_t^j u \|^{10} ) \\
		& \qquad \qquad \qquad + ( \| \nabla \partial_t^j \phi \|^8 + \| \Delta \partial_t^j \phi \|^8 ) \| \Delta \partial_t^j \phi \|^2 \Big] \\
		\leq & \tfrac{1}{2} \rho_2 \| \partial_t^k u_t \|^2_{L^2_{\rho (\phi)}} + \tfrac{1}{4} \| \Delta \partial_t^k u \|^2 + \tfrac{1}{4} \| \nabla \Delta \partial_t^k \phi \|^2 \\
		& + C \sum_{0 \leq j \leq k-1} ( 1 + E_j^2 (t) ) D_j (t) + C \sum_{0 \leq j \leq k} (1 + E_j^8 (t) ) E_j^5 (t) \,.
	\end{align*}
Consequently, one has
\begin{equation}\label{H2tk-3}
	\begin{aligned}
		\| \Delta \partial_t^k u \|^2 + \| \partial_t^k p \|^2 \leq & \tfrac{1}{2} c_1 \| \partial_t^k u_t \|^2_{L^2_{\rho (\phi)}} + \tfrac{1}{4} \| \Delta \partial_t^k u \|^2 + \tfrac{1}{4} \| \nabla \Delta \partial_t^k \phi \|^2 \\
		& + C \sum_{0 \leq j \leq k-1} ( 1 + E_j^2 (t) ) D_j (t) + C \sum_{0 \leq j \leq k} (1 + E_j^8 (t) ) E_j^5 (t) \,.
	\end{aligned}
\end{equation}
Here the functionals $E_j (t)$ and $D_j (t)$ are defined in \eqref{EjDj}.

Next we estimate the quantity $\| \nabla \Delta \partial_t^k \phi \|^2$. The $\phi$-equation in \eqref{k-equ} gives us
\begin{equation}\label{H2tk-4}
	\begin{aligned}
		\Delta \partial_t^k \phi = \Phi_k (u, \phi) : = \tfrac{1}{\gamma \lambda} \partial_t^k (\phi_t + u \cdot \nabla \phi) + \partial_t^k \big[ f' (\phi) + \tfrac{1}{\lambda} \rho' (\phi) \tfrac{|u|^2}{2} \big] \,.
	\end{aligned}
\end{equation}
It infers from the Sobolev embedding theory in Lemma \ref{Lmm-Calculus} and the bound $- 1 \leq \phi (t,x) \leq 1$ that
	\begin{align*}
		\| \nabla \partial_t^k (u \cdot \nabla \phi ) \|^2 \leq & C \sum_{0 \leq j \leq k} \big( \| \Delta \partial_t^j u \|^\frac{3}{2} + \| \nabla \Delta \partial_t^j \phi \|^\frac{3}{2} \big) \\
		& \qquad \qquad \times \big( \| \partial_t^j u \|^\frac{5}{2}_{L^2_{\rho (\phi)}} + \| \nabla \partial_t^j u \|^\frac{5}{2} + \| \nabla \partial_t^j \phi \|^\frac{5}{2} + \| \Delta \partial_t^j \phi \|^\frac{5}{2} \big) \\
		\leq & C \sum_{0 \leq j \leq k} E_j^\frac{5}{4} (t) \big( \| \Delta \partial_t^j u \|^\frac{3}{2} + \| \nabla \Delta \partial_t^j \phi \|^\frac{3}{2} \big) \,,
	\end{align*}
and
\begin{equation*}
	\begin{aligned}
		\| \nabla \partial_t^k f' (\phi) \|^2 \leq C \sum_{0 \leq j \leq k} \big( 1 + \| \partial_t^j \phi \|^4 + \| \Delta \partial_t^j \phi \|^4 \big) \| \nabla \partial_t^j \phi \|^2 \leq C \sum_{0 \leq j \leq k} (1 + E_j^2 (t) ) E_j (t) \,,
	\end{aligned}
\end{equation*}
and
\begin{equation*}
	\begin{aligned}
		\| \nabla \partial_t^k [ \rho' (\phi) \tfrac{|u|^2}{2} ] \|^2 \leq & C \sum_{0 \leq j \leq k} \big( \| \nabla \partial_t^j \phi \|^2 + \| \Delta \partial_t^j \phi \|^2 \big) \| \nabla \partial_t^j u \|^4 \\
		+ C & \sum_{0 \leq j \leq k} (1 + \| \partial_t^j \phi \|^2 + \| \Delta \partial_t^j \phi \|^2 ) ( \| \partial_t^j u \|^\frac{5}{2}_{L^2_{\rho (\phi)}} + \| \nabla \partial_t^j u \|^\frac{5}{2} ) \| \Delta \partial_t^j u \|^\frac{3}{2} \\
		\leq & C \sum_{0 \leq j \leq k} \big[ E_j^3 (t) + (1 + E_j (t)) E_j^\frac{5}{4} (t) \| \Delta \partial_t^j u \|^\frac{3}{2} \big] \,.
	\end{aligned}
\end{equation*}
One thereby has
\begin{equation}\label{H2tk-5}
	\begin{aligned}
		\| \nabla \Phi_k (u, \phi) \|^2 \leq & \tfrac{1}{2} c_3 \| \nabla \partial_t^k \phi_t \|^2 + C \sum_{0 \leq j \leq k} (1 + E_j^2 (t) ) E_j (t) \\
		& + C \sum_{0 \leq j \leq k} (1 + E_j (t)) E_j^\frac{5}{4} (t) \big( \| \Delta \partial_t^j u \|^\frac{3}{2} + \| \nabla \Delta \partial_t^j \phi \|^\frac{3}{2} \big) \,.
	\end{aligned}
\end{equation}
Then, the Young's inequality and \eqref{H2tk-4}-\eqref{H2tk-5} imply that
	\begin{align}\label{H2tk-6}
		\no \| \nabla \Delta \partial_t^k \phi \|^2 \leq & \tfrac{1}{2} c_3 \| \nabla \partial_t^k \phi_t \|^2 + \tfrac{1}{4} \| \Delta \partial_t^k u \|^2 + \tfrac{1}{4} \| \nabla \Delta \partial_t^k \phi \|^2 \\
		\no & + \sum_{0 \leq j \leq k-1} ( \| \Delta \partial_t^j u \|^2 + \| \nabla \Delta \partial_t^j \phi \|^2 ) + C \sum_{0 \leq j \leq k} (1 + E_j^8 (t) ) E_j (t) \\
		\no \leq & \tfrac{1}{2} c_3 \| \nabla \partial_t^k \phi_t \|^2 + \tfrac{1}{4} \| \Delta \partial_t^k u \|^2 + \tfrac{1}{4} \| \nabla \Delta \partial_t^k \phi \|^2 \\
		& +  \sum_{0 \leq j \leq k-1} D_j (t) + C \sum_{0 \leq j \leq k} (1 + E_j^8 (t) ) E_j (t) \,.
	\end{align}
It there follows from \eqref{H2tk-3} and \eqref{H2tk-6} that
\begin{equation}\label{H2tk-7}
	\begin{aligned}
		\| \Delta \partial_t^k u \|^2 + \| \nabla \Delta \partial_t^k \phi \|^2 + \| \partial_t^k p \|^2 \leq & c_1 \| \partial_t^k u_t \|^2_{L^2_{\rho (\phi)}} + c_3 \| \nabla \partial_t^k \phi_t \|^2 \\
		+ C \sum_{0 \leq j \leq k-1} & ( 1 + E_j^2 (t) ) D_j (t) + C \sum_{0 \leq j \leq k} (1 + E_j^{12} (t) ) E_j (t) \,.
	\end{aligned}
\end{equation}

Now, in \eqref{H2tk-2}, we first take $\delta_2 > 0$ such that $\delta_2 \leq \tfrac{1}{4}$ and $2 c_1 \delta_2 \leq \tfrac{1}{4}$. Then we take a constant $\kappa > 0$ such that $\kappa c_1 \leq \tfrac{1}{4}$ and $\kappa c_3 \leq \tfrac{1}{4}$. We finally take $\delta > 0$ in \eqref{H2tk-2} such that $2 C_{\delta_2} \delta^2 = \tfrac{1}{2} \kappa > 0$. From adding \eqref{H2tk-2} to the $\kappa$ times of \eqref{H2tk-7}, it follows that
\begin{equation*}
	\begin{aligned}
		& \tfrac{1}{2} \tfrac{d}{dt} \big( \| \partial_t^{k} u \|_{L^2_{\rho(\phi)}}^2 + \mu \| \nabla \partial_t^{k} u \|^2 + \| \partial_t^{k} \phi \|^2 + ( \gamma \lambda + 1 ) \| \nabla \partial_t^{k}\phi \|^2 + \gamma\lambda\| \Delta \partial_t^{k} \phi \|^2 \big) \\
		& + \mu\| \nabla \partial_t^{k} u \|^2 + \tfrac{1}{2} \| \partial_t^{k} u_t \|_{L^2_{\rho(\phi)}}^2 + \gamma \lambda \| \nabla \partial_t^{k} \phi \|^2 + \tfrac{1}{2} \| \partial_t^{k} \phi_t \|^2 + \gamma \lambda \| \Delta \partial_t^{k} \phi \|^2 \\
		& + \tfrac{1}{2} \| \nabla \partial_t^{k} \phi_t \|^2 + \tfrac{\kappa}{2} \| \Delta \partial_t^k u \|^2 + \tfrac{\kappa}{2} \| \nabla \Delta \partial_t^k \phi \|^2 + \kappa \| \partial_t^k p \|^2 \\
		\leq & C \sum_{0 \leq j \leq k-1} ( 1 + E_j^2 (t) ) D_j (t) + C \sum_{0 \leq j \leq k} (1 + E_j^{12} (t) ) E_j (t) \,,
	\end{aligned}
\end{equation*}
which implies the inequality \eqref{EkDk-bnd}. The proof of Lemma \ref{Lmm-H2-tk} is therefore completed.
\end{proof}

\subsection{Estimates for higher order spatial derivatives}\label{Subsec:3_4}

As shown in Lemma \ref{Lmm-H2-t0} and Lemma \ref{Lmm-H2-tk}, the energy $E_j (t)$ and dissipative rates $D_j (t)$ involve at most the third order spatial derivatives. In this subsection, by employing the ADN theory, we aim at investigating the information of the higher order spatial derivatives of the solutions $(u, p, \phi)$ to the ACNS system \eqref{equ-1}. For any fixed integer $\ell \geq 0$, the equations \eqref{equ-1} with the boundary conditions \eqref{BC-ACNS} indicate that
\begin{equation}
	\begin{aligned}
		- \mu \Delta \partial_t^\ell u + \nabla \partial_t^\ell p = & U_\ell (u, \phi) \quad \ \textrm{in } \Omega \,, \\
		\nabla \cdot \partial_t^\ell u = & 0 \qquad \qquad \ \textrm{in } \Omega \,, \\
		\partial_t^\ell u = & 0 \qquad \qquad \ \textrm{on } \partial \Omega \,,
	\end{aligned}
\end{equation}
and
\begin{equation}
	\begin{aligned}
		\Delta \partial_t^\ell \phi = & \Phi_\ell (u, \phi) \qquad \textrm{in } \Omega \,, \\
		\tfrac{\partial}{\partial \n} \partial_t^\ell \phi = & 0 \qquad \qquad \ \ \ \textrm{on } \partial \Omega \,,
	\end{aligned}
\end{equation}
where $U_\ell (u, \phi)$ and $\Phi_\ell (u, \phi)$ are defined in \eqref{Uk-u-phi} and \eqref{H2tk-4}, respectively. Namely,
\begin{equation}\label{U-Phi-l}
	\begin{aligned}
		& U_\ell (u, \phi) = - \partial_t^\ell \big[ \rho (\phi) ( u_t + u \cdot \nabla u ) \big] - \lambda \nabla \cdot \partial_t^\ell ( \nabla \phi \otimes \nabla \phi ) \,, \\
		& \Phi_\ell (u, \phi) = \tfrac{1}{\gamma \lambda} \partial_t^\ell (\phi_t + u \cdot \nabla \phi) + \partial_t^\ell \big[ f' (\phi) + \tfrac{1}{\lambda} \rho' (\phi) \tfrac{|u|^2}{2} \big] \,.
	\end{aligned}
\end{equation}
Then, the ADN theory given in Lemma \ref{Lmm-Stokes}-\ref{Lmm-Laplace} follows that for any integer $s \geq 0$,
\begin{equation}\label{u-U-bnd}
	\begin{aligned}
		\| \partial_t^\ell u \|^2_{s+2} + \| \partial_t^\ell p \|^2_{s+1} \leq C \| U_\ell (u, \phi) \|^2_s \,,
	\end{aligned}
\end{equation}
and
\begin{equation}\label{phi-Phi-bnd}
	\begin{aligned}
		\| \partial_t^\ell \phi \|^2_{s+2} \leq C \big( \| \Phi_\ell (u, \phi) \|^2_s + \| \partial_t^\ell \phi \|^2_1 \big)
	\end{aligned}
\end{equation}
provided that the quantities $\| U_\ell (u, \phi) \|^2_s$ and $\| \Phi_\ell (u, \phi) \|^2_s$ are both finite. Here $\| \partial_t^\ell \phi \|^2_1 \lesssim E_\ell (t)$.

Then, we will control the above two quantities in terms of the functionals $E_j (t)$. For notation simplicity, we denote by
\begin{equation}\label{EUPhi-ls}
	\begin{aligned}
	    \mathbf{E}_\ell (t) : = \sum_{0 \leq j \leq \ell} E_j (t) \,, \ \mathbf{U}_{\ell, s} : = \sum_{0 \leq j \leq \ell} \| U_\ell (u, \phi) \|^2_s \,, \ \bm{\Phi}_{\ell, s} : = \sum_{0 \leq j \leq \ell} \| \Phi_\ell (u, \phi) \|^2_s \,.
	\end{aligned}
\end{equation}
Remark that, for $\ell_1 \leq \ell_2$,
\begin{equation}\label{Monotone-l}
	\begin{aligned}
		\mathbf{E}_{\ell_1} (t) \leq \mathbf{E}_{\ell_2} (t) \,, \quad \mathbf{U}_{\ell_1, s} \leq \mathbf{U}_{\ell_2, s} \,, \quad \bm{\Phi}_{\ell_1, s} \leq \bm{\Phi}_{\ell_2, s} \,,
	\end{aligned}
\end{equation}
and for $s_1 \leq s_2$,
\begin{equation}\label{Monotone-s}
	\begin{aligned}
		\mathbf{U}_{\ell, s_1} \leq \mathbf{U}_{\ell, s_2} \,, \quad \bm{\Phi}_{\ell, s_1} \leq \bm{\Phi}_{\ell, s_2} \,.
	\end{aligned}
\end{equation}

\begin{lemma}\label{Lmm-U-Phi}
	For the integers $\ell \geq 0$ and $s \geq 0$, the following estimates hold:
	\begin{enumerate}
		\item For $s = 0$,
		\begin{equation}\label{UPhi-l-0}
			\begin{aligned}
				\| U_\ell (u, \phi) \|^2 \lesssim & \eps_0 ( \mathbf{U}_{\ell,0} + \bm{\Phi}_{\ell, 1} ) + (1 + \mathbf{E}_{\ell+1}^{12} (t) ) \mathbf{E}_{\ell+1} (t) \,, \\
				\| \Phi_\ell (u, \phi) \|^2 \lesssim & ( 1 + \mathbf{E}_{\ell+1}^2 (t) ) \mathbf{E}_{\ell+1} (t) \,.
			\end{aligned}
		\end{equation}
		
		\item For $s = 1$,
		\begin{equation}\label{UPhi-l-1}
			\begin{aligned}
				\| U_\ell (u, \phi) \|^2_1 \lesssim & \eps_0 ( \mathbf{U}_{\ell,0} + \bm{\Phi}_{\ell, 1} ) + ( 1 + \mathbf{E}_{\ell+1}^2 (t) ) \mathbf{U}^2_{\ell, 0} + \bm{\Phi}_{\ell, 1}^2 + (1 + \mathbf{E}_{\ell+1}^{12} (t) ) \mathbf{E}_{\ell+1} (t) \,, \\
				\| \Phi_\ell (u, \phi) \|^2_1 \lesssim & \eps_0 ( \mathbf{U}_{\ell, 0} + \bm{\Phi}_{\ell, 1} ) + ( 1 + \mathbf{E}_{\ell+1}^8 (t) ) \mathbf{E}_{\ell+1} (t) \,.
			\end{aligned}
		\end{equation}
		
		\item For $s \geq 2$,
		\begin{equation}\label{UPhi-l-s}
			\begin{aligned}
				\| U_\ell (u, \phi) \|_s^2 \lesssim & (1 + \mathbf{E}^2_\ell (t) + \bm{\Phi}_{\ell, s-2} ) ( \mathbf{U}_{\ell + 1, s-2} + \mathbf{U}_{\ell, s-1} ) + \bm{\Phi}_{\ell, s} + \mathbf{E}^2_\ell (t) \,, \\
				\| \Phi_\ell (u, \phi) \|_s^2 \lesssim & \bm{\Phi}_{\ell + 1, s-2} + ( 1 + \mathbf{U}_{\ell, s-2}^2 + \bm{\Phi}_{\ell, s-2}^2 ) \bm{\Phi}_{\ell, s-2} \\
				& \qquad \qquad \qquad  \ \ + ( 1 + \mathbf{E}_\ell (t) ) \mathbf{U}^2_{\ell, s-2} + ( 1 + \mathbf{E}^2_\ell (t) ) \mathbf{E}_\ell (t) \,.
			\end{aligned}
		\end{equation}
	    Here $\eps_0 > 0$ is sufficiently small to determined.
	\end{enumerate}
\end{lemma}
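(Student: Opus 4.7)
My plan is to expand the time derivatives in $U_\ell$ and $\Phi_\ell$ from \eqref{U-Phi-l} by the Leibniz rule, estimate each resulting product via Hölder and the Gagliardo--Nirenberg-type bounds of Lemma \ref{Lmm-Calculus}, and, whenever a spatial derivative of order higher than what $\mathbf{E}_{\ell+1}(t)$ controls appears, to invoke the ADN estimates \eqref{u-U-bnd}--\eqref{phi-Phi-bnd} to trade those derivatives for $\mathbf{U}$- or $\bm{\Phi}$-norms at strictly lower $s$-index. The polynomial form \eqref{rho-def} of $\rho(\phi)$, together with $f'(\phi)=\tfrac{1}{\varepsilon^2}(\phi^2-1)\phi$ and $-1\leq\phi\leq 1$, will keep every $\phi$-nonlinearity tame, just as in the proof of Lemma \ref{Lmm-K}.

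For case (1), $s=0$, I would argue that the dominant contribution to $\|U_\ell\|^2$ is $\rho(\phi)\partial_t^{\ell+1}u$, whose weighted $L^2$-norm lies in $E_{\ell+1}(t)\leq\mathbf{E}_{\ell+1}(t)$, while the convective and capillary terms, expanded by Leibniz, yield polynomial multiples of $\mathbf{E}_{\ell+1}$ by the same $L^3$--$L^6$ interpolation used in Lemma \ref{Lmm-K}. In the places where a factor $\|\Delta\partial_t^j u\|$ or $\|\nabla\Delta\partial_t^j\phi\|$ is unavoidable, I would bound it through \eqref{u-U-bnd}--\eqref{phi-Phi-bnd} by $\|U_j\|^{1/2}$ or $\|\Phi_j\|_1^{1/2}$ and apply Young's inequality against a small $\eps_0$ to produce the $\eps_0(\mathbf{U}_{\ell,0}+\bm{\Phi}_{\ell,1})$ factor; the estimate for $\|\Phi_\ell\|^2$ is cleaner because every summand of $\Phi_\ell$ is directly dominated by $\mathbf{E}_{\ell+1}$. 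Case (2), $s=1$, would repeat the same scheme with one additional gradient on one factor per product; the new quadratic terms $\mathbf{U}_{\ell,0}^2$ and $\bm{\Phi}_{\ell,1}^2$ then come from products of the shape $\|\nabla\Delta\partial_t^j u\|\cdot\|\cdot\|_{L^\infty}$ where the first factor falls under ADN at the lowest available $s$-level.

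For case (3), $s\geq 2$, I would use the stronger consequences $\|\partial_t^\ell u\|_s^2\lesssim\mathbf{U}_{\ell,s-2}$, $\|\partial_t^\ell u\|_{s+1}^2\lesssim\mathbf{U}_{\ell,s-1}$ and $\|\partial_t^\ell\phi\|_s^2\lesssim\bm{\Phi}_{\ell,s-2}+E_\ell(t)$ granted by \eqref{u-U-bnd}--\eqref{phi-Phi-bnd}, combined with the classical Moser product estimate (equivalently, that $H^{s-1}(\Omega)$ is a Banach algebra in $\R^3$ for $s\geq 2$), to distribute derivatives across each Leibniz term so that exactly one factor carries the top-order spatial derivative in $L^2$ (absorbed by ADN at the smallest admissible $s$-index) while the remaining factors sit in $L^\infty$ or $H^{s-1}$ and are traced back to $\mathbf{E}_\ell(t)$ via Lemma \ref{Lmm-Calculus}. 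Applied term-by-term, this would produce, for $\|U_\ell\|_s^2$, the $\mathbf{U}_{\ell+1,s-2}$ contribution from $\partial_t^\ell[\rho(\phi)u_t]$, the $\mathbf{U}_{\ell,s-1}$ contribution from the convective term, and the $\bm{\Phi}_{\ell,s}$ contribution from the capillary term; and for $\|\Phi_\ell\|_s^2$, the $\bm{\Phi}_{\ell+1,s-2}$ contribution from $\partial_t^{\ell+1}\phi$, the $\mathbf{U}_{\ell,s-2}$ contribution from $u\cdot\nabla\phi$, and the polynomial factor $(1+\mathbf{U}_{\ell,s-2}^2+\bm{\Phi}_{\ell,s-2}^2)\bm{\Phi}_{\ell,s-2}$ from the $f'(\phi)$ and $\rho'(\phi)|u|^2$ nonlinearities.

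The main difficulty will be combinatorial bookkeeping: in each product $\prod\partial_t^{j_i}\nabla^{a_i}(\cdot)$ coming from Leibniz, exactly one factor must carry the top-order derivative (to be placed in $L^2$ and bounded via \eqref{u-U-bnd}--\eqref{phi-Phi-bnd} at the smallest admissible $s$-index), while every other factor must be controlled in an $L^p$-norm with $p>2$ that Lemma \ref{Lmm-Calculus} reduces to $\mathbf{E}_\ell(t)$. This balance is what will let the recursion close: the apparently self-referential $\mathbf{U}_{\ell,0}$ and $\bm{\Phi}_{\ell,1}$ in cases (1)--(2) can then be absorbed by the small $\eps_0$ after Young's inequality and summation over $j\leq\ell$ using the monotonicity \eqref{Monotone-l}, while in case (3) every $\mathbf{U}$- and $\bm{\Phi}$-term on the right has strictly smaller $s$-index (apart from the single $\bm{\Phi}_{\ell,s}$ in the bound on $\|U_\ell\|_s^2$, which is to be combined with the $\|\Phi_\ell\|_s^2$-estimate in the next step, Corollary \ref{Coro-UPhi-bnd}).
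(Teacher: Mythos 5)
Your plan coincides with the paper's own proof in the Appendix: there, too, $U_\ell$ and $\Phi_\ell$ are expanded by Leibniz, the factors are handled by H\"older and Lemma \ref{Lmm-Calculus}, the unavoidable top-order factors $\| \Delta \partial_t^j u \|$ and $\| \nabla \Delta \partial_t^j \phi \|$ are converted through the ADN bounds \eqref{u-U-bnd}--\eqref{phi-Phi-bnd} and absorbed by Young's inequality into $\eps_0 ( \mathbf{U}_{\ell,0} + \bm{\Phi}_{\ell,1} )$ for $s=0,1$, while for $s \geq 2$ the product inequality \eqref{Calculus-Inq} is combined with $\| \partial_t^{j+1} u \|^2_s \lesssim \mathbf{U}_{\ell+1, s-2}$, $\| \partial_t^j u \|^2_{s+1} \lesssim \mathbf{U}_{\ell, s-1}$, $\| \partial_t^j \phi \|^2_s \lesssim \bm{\Phi}_{\ell, s-2} + E_j (t)$ and $\| \nabla \partial_t^j \phi \|^2_{s+1} \lesssim \bm{\Phi}_{\ell, s} + E_j (t)$, exactly as you propose. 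The only cosmetic slips are that \eqref{u-U-bnd} yields $\| \Delta \partial_t^j u \| \lesssim \mathbf{U}_{\ell,0}^{1/2}$ rather than $\| U_j \|^{1/2}$, and that at $s=2$ one should invoke the $H^s$ algebra (or Moser/tame) product estimate of \eqref{Calculus-Inq} rather than an algebra property of $H^{s-1}$; neither affects the argument.
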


The proof will be given in Appendix \ref{Sec:Appendix} later.

\begin{corollary}\label{Coro-UPhi-bnd}
	Let $\ell \geq 0$ and $s \geq 1$. Then
	\begin{equation}\label{UPhi-s-Bnd}
		\begin{aligned}
			\bm{\Phi}_{\ell, 0} \lesssim (1 + \mathbf{E}_{\ell+1}^2 (t)) \mathbf{E}_{\ell+1} (t) \,, \quad \mathbf{U}_{\ell, s-1} + \bm{\Phi}_{\ell, s} \lesssim (1 + \mathbf{E}_{l+s}^{\aleph_s} (t) ) \mathbf{E}_{\ell+s} (t) \,,
		\end{aligned}
	\end{equation}
    where $\aleph_s : = \tfrac{11}{3} \cdot 4^s - \tfrac{8}{3} > 0$.
\end{corollary}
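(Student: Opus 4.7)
The first bound is immediate: summing the second estimate of Lemma \ref{Lmm-U-Phi}(1) over $0 \leq j \leq \ell$ and invoking the monotonicity \eqref{Monotone-l} gives $\bm{\Phi}_{\ell,0} \lesssim (1+\mathbf{E}_{\ell+1}^2)\mathbf{E}_{\ell+1}$.

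The second bound I would prove by induction on $s \geq 1$. For the base case $s = 1$, summing the first estimate of Lemma \ref{Lmm-U-Phi}(1) together with the second estimate of Lemma \ref{Lmm-U-Phi}(2) over $0 \leq j \leq \ell$, and using \eqref{Monotone-l}--\eqref{Monotone-s} to collapse each $\mathbf{U}_{j,\cdot}$, $\bm{\Phi}_{j,\cdot}$ into its $j=\ell$ counterpart, yields
\[
\mathbf{U}_{\ell,0} + \bm{\Phi}_{\ell,1} \leq C\eps_0\bigl(\mathbf{U}_{\ell,0}+\bm{\Phi}_{\ell,1}\bigr) + C(1+\mathbf{E}_{\ell+1}^{12})\mathbf{E}_{\ell+1}.
\]
Choosing $\eps_0$ small so that $C\eps_0 \leq \tfrac12$ absorbs the first term on the right and establishes the case $\aleph_1 = 12$.

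For the inductive step $s \geq 2$, assume the stated bound for all $1 \leq s' < s$ and all $\ell$. I would bound $\bm{\Phi}_{\ell,s}$ and $\mathbf{U}_{\ell,s-1}$ separately. Summing the second estimate of Lemma \ref{Lmm-U-Phi}(3) over $j \leq \ell$ controls $\bm{\Phi}_{\ell,s}$ by $\bm{\Phi}_{\ell+1,s-2}$, $\mathbf{U}_{\ell,s-2}$, $\bm{\Phi}_{\ell,s-2}$ and $\mathbf{E}_\ell$, all of which sit at level $\leq s-1$ and are hence covered by the inductive hypothesis (or by the first half of the Corollary when $s=2$). For $\mathbf{U}_{\ell,s-1}$, either the first estimate of Lemma \ref{Lmm-U-Phi}(2) with the already-proved $s=1$ case (if $s-1=1$), or the first estimate of Lemma \ref{Lmm-U-Phi}(3) at index $s-1$ (if $s-1 \geq 2$), applies; in the latter case the right-hand side involves $\bm{\Phi}_{\ell,s-3}$, $\mathbf{U}_{\ell+1,s-3}$, $\mathbf{U}_{\ell,s-2}$, $\bm{\Phi}_{\ell,s-1}$, each sitting at level $\leq s-1 < s$, hence again under the induction hypothesis.

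It remains to track the exponents, and this is the main obstacle. The elementary rules $(1+\mathbf{E}^a)^2\mathbf{E}^2 \lesssim (1+\mathbf{E}^{2a+1})\mathbf{E}$ and $(1+\mathbf{E}^a)\mathbf{E}\cdot(1+\mathbf{E}^b)\mathbf{E} \lesssim (1+\mathbf{E}^{a+b+1})\mathbf{E}$ reduce the analysis to bookkeeping. The worst product in $\bm{\Phi}_{\ell,s}$ is $\mathbf{U}_{\ell,s-2}^2\bm{\Phi}_{\ell,s-2}$, whose exponent is $2\aleph_{s-1}+\aleph_{s-2}+2$; the worst product in $\mathbf{U}_{\ell,s-1}$ is $\bm{\Phi}_{\ell,s-3}\cdot\mathbf{U}_{\ell,s-2}$, whose exponent is $\aleph_{s-3}+\aleph_{s-1}+1$. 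Since $\aleph_{\cdot}$ is increasing, both are dominated by $4\aleph_{s-1}+8$, so the choice $\aleph_s := 4\aleph_{s-1}+8$ with $\aleph_1=12$ closes the induction and solves to $\aleph_s = \tfrac{11}{3}\cdot 4^s - \tfrac{8}{3}$. The shifted-level quantities $\mathbf{U}_{\ell+1,\cdot}$ and $\bm{\Phi}_{\ell+1,\cdot}$ fit within the target level $\ell+s$ via the identity $\mathbf{E}_{(\ell+1)+(s-1)} = \mathbf{E}_{\ell+s}$ and the monotonicity \eqref{Monotone-l}, completing the proof.
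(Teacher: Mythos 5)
Your proposal is correct, and its skeleton (first bound by summation and \eqref{Monotone-l}; second bound by induction on $s$ with the base case $s=1$ obtained from \eqref{UPhi-l-0}, \eqref{UPhi-l-1} and absorption of the $\eps_0$-terms) coincides with the paper's. The inductive step, however, is organized differently. The paper first loosens $\mathbf{U}_{\ell,s-2}\le \mathbf{U}_{\ell,s-1}$ inside the $\bm{\Phi}_{\ell,s}$-estimate of \eqref{UPhi-l-s} and then substitutes the bound \eqref{U-1BND} for $\mathbf{U}_{\ell,s-1}$ back into it; this produces the single-level recursion \eqref{InductionRel}, $\bm{\Xi}_{\ell,s}\lesssim A_\ell(\bm{\Xi}_{\ell+1,s-1}+\bm{\Xi}_{\ell+1,s-1}^4)+B_\ell$, whose quartic term (times $A_\ell\thicksim 1+\mathbf{E}_\ell^5$) is exactly what dictates the recursion $\aleph_s=4\aleph_{s-1}+8$, so only the induction hypothesis at level $s-1$ is ever invoked. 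You instead keep $\mathbf{U}_{\ell,s-2}$ as it stands and apply a strong induction hypothesis term by term at levels $s-1$, $s-2$, $s-3$ (with the shifted quantities $\mathbf{U}_{\ell+1,\cdot}$, $\bm{\Phi}_{\ell+1,\cdot}$ handled by $\mathbf{E}_{(\ell+1)+(s-1)}=\mathbf{E}_{\ell+s}$, as you note). This avoids the quartic substitution altogether and your worst exponents, $2\aleph_{s-1}+\aleph_{s-2}+2$ and $\aleph_{s-3}+\aleph_{s-1}+1$, are strictly below $4\aleph_{s-1}+8$; since $(1+\mathbf{E}^a)\mathbf{E}\lesssim(1+\mathbf{E}^b)\mathbf{E}$ for $a\le b$, the stated bound with $\aleph_s=\tfrac{11}{3}\cdot4^s-\tfrac83$ follows, and your route would in fact support a smaller recursion (roughly $\aleph_s=3\aleph_{s-1}+2$), i.e.\ it is slightly sharper though the paper's choice of $\aleph_s$ is the one "explained" by its own derivation. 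Two small points worth making explicit: for $s=3$ the term $\bm{\Phi}_{\ell,s-3}=\bm{\Phi}_{\ell,0}$ carries the exponent $2$ from the first half of the Corollary rather than a (nonexistent) $\aleph_0$, which still fits your bookkeeping; and, as in the paper, summing the $\eps_0$-terms of Lemma \ref{Lmm-U-Phi} over $0\le j\le\ell$ produces an $\ell$-dependent constant in front of $\eps_0(\mathbf{U}_{\ell,0}+\bm{\Phi}_{\ell,1})$, which is harmless since $\eps_0$ is chosen after $\ell\le\Lambda$ is fixed.
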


\begin{proof}
	First, by the definition $\bm{\Phi}_{\ell, 0}$ in \eqref{EUPhi-ls}, the second inequality in \eqref{UPhi-l-0} and \eqref{Monotone-l} implies that
	\begin{equation*}
		\begin{aligned}
			\bm{\Phi}_{\ell, 0} = \sum_{0 \leq j \leq \ell} \| \Phi_j (u, \phi) \|^2 \lesssim \sum_{0 \leq j \leq \ell} (1 + \mathbf{E}_{j+1}^2 (t)) \mathbf{E}_{j+1} (t) \lesssim ( 1 + \mathbf{E}_{\ell+1}^2 (t) ) \mathbf{E}_{\ell+1} (t) \,,
		\end{aligned}
	\end{equation*}
	that is, the first bound in \eqref{UPhi-s-Bnd} holds.
	
	We next control the quantity $\mathbf{U}_{\ell, s-1} + \bm{\Phi}_{\ell, s}$ for $s \geq 1$ as in \eqref{UPhi-s-Bnd} by induction arguments.
	
	\underline{\tt Case 1. $s = 1$.}
	
	Observe that the first inequality in \eqref{UPhi-l-0} reduces to
	\begin{equation*}
		\begin{aligned}
			\mathbf{U}_{\ell, 0} \lesssim \eps_0 ( \mathbf{U}_{\ell,0} + \bm{\Phi}_{\ell, 1} ) + (1 + \mathbf{E}_{\ell+1}^{12} (t) ) \mathbf{E}_{\ell+1} (t) \,.
		\end{aligned}
	\end{equation*}
	Moreover, the second inequality in \eqref{UPhi-l-1} indicates that
	\begin{equation*}
		\begin{aligned}
			\bm{\Phi}_{\ell, 1} \lesssim \eps_0 ( \mathbf{U}_{\ell, 0} + \bm{\Phi}_{\ell, 1} ) + ( 1 + \mathbf{E}_{\ell+1}^8 (t) ) \mathbf{E}_{\ell+1} (t) \,.
		\end{aligned}
	\end{equation*}
	By taking $\eps_0 > 0$ small enough, one has
	\begin{equation}\label{UPhi-1-bnd}
		\begin{aligned}
			\mathbf{U}_{\ell, 0} + \bm{\Phi}_{\ell, 1} \lesssim (1 + \mathbf{E}_{\ell+1}^{\aleph_1} (t) ) \mathbf{E}_{\ell+1} (t) \,,
		\end{aligned}
	\end{equation}
	where $\aleph_1 : = \tfrac{11}{3} \cdot 4^1 - \tfrac{8}{3} = 12 > 0$. Namely, the second bound in \eqref{UPhi-s-Bnd} holds for the case $s = 1$.
	
	\underline{\tt Case 2. $s = 2$.}
	
	The bounds \eqref{Monotone-l}-\eqref{Monotone-s} and \eqref{UPhi-l-1} indicate that
	\begin{equation*}
		\begin{aligned}
			\mathbf{U}_{\ell,1} \lesssim & \eps_0 ( \mathbf{U}_{\ell, 0} + \bm{\Phi}_{\ell, 1} ) + ( 1 + \mathbf{E}_{\ell+1}^2 (t) ) ( \mathbf{U}_{\ell, 0} + \bm{\Phi}_{\ell, 1} )^2 + (1 + \mathbf{E}_{\ell+1}^{12} (t) ) \mathbf{E}_{\ell+1} (t) \\
			\lesssim & (1 + \mathbf{E}_{\ell+1}^{27} (t) ) \mathbf{E}_{\ell+1} (t) \lesssim (1 + \mathbf{E}_{\ell+1}^{\aleph_2} (t) ) \mathbf{E}_{\ell+1} (t) \,,
		\end{aligned}
	\end{equation*}
	due to $\aleph_2 = \tfrac{1}{3} \cdot 4^2 - \tfrac{8}{3} = 56 > 27$. Moreover, by \eqref{UPhi-l-s}, \eqref{Monotone-l}-\eqref{Monotone-s}, the first inequality in \eqref{UPhi-s-Bnd} and \eqref{UPhi-1-bnd}, one has
	\begin{equation*}
		\begin{aligned}
			\bm{\Phi}_{\ell,2} \lesssim & \bm{\Phi}_{\ell+1, 0} + ( 1 + \mathbf{U}_{\ell, 0}^2 + \bm{\Phi}_{\ell, 0}^2 ) \bm{\Phi}_{\ell, 0} + ( 1 + \mathbf{E}_\ell (t) ) \mathbf{U}^2_{\ell, 0} + ( 1 + \mathbf{E}^2_\ell (t) ) \mathbf{E}_\ell (t) \\
			\lesssim & ( 1 + \mathbf{E}^{2 \aleph_1 + 4}_{\ell+2} (t) ) \mathbf{E}_{\ell+2} (t) \lesssim ( 1 + \mathbf{E}^{\aleph_2}_{\ell+2} (t) ) \mathbf{E}_{\ell+2} (t)
		\end{aligned}
	\end{equation*}
	where we have used $\aleph_2 = 56 > 2 \aleph_1 + 4 = 28$. Consequently,
	\begin{equation*}
		\begin{aligned}
			\mathbf{U}_{\ell,1} + \bm{\Phi}_{\ell,2} \lesssim ( 1 + \mathbf{E}^{\aleph_2}_{\ell+2} (t) ) \mathbf{E}_{\ell+2} (t) \,,
		\end{aligned}
	\end{equation*}
	namely, the second inequality in \eqref{UPhi-s-Bnd} holds for $s=2$. It remains to prove the cases $s \geq 3$.
	
	\underline{\tt Case 3. The Induction Hypotheses for $k = 2, 3, \cdots, s -1$.}
	
	Assume that
	\begin{equation}\label{Hypotheses}
		\begin{aligned}
			\mathbf{U}_{\ell, k-1} + \bm{\Phi}_{\ell, k} \lesssim ( 1 + \mathbf{E}^{\aleph_k}_{\ell+k} (t) ) \mathbf{E}_{\ell+k} (t) \,, \ k = 2, 3, \cdots, s -1 \textrm{ and } \ell \geq 0 \,.
		\end{aligned}
	\end{equation}
	
	\underline{\tt Case 4. Consider the case $s $.}
	
	The bounds \eqref{UPhi-l-s} and \eqref{Monotone-l}-\eqref{Monotone-s} indicate that for $s \geq 3$,
	\begin{equation}\label{U-1BND}
		\begin{aligned}
			\mathbf{U}_{\ell, s-1} \lesssim (1 + \mathbf{E}_\ell^2 (t) + \bm{\Phi}_{\ell, s-2}) \mathbf{U}_{\ell+1, s-2} + \bm{\Phi}_{\ell+1, s-1} + \mathbf{E}_\ell^2 (t) \,,
		\end{aligned}
	\end{equation}
	and
	\begin{equation*}
		\begin{aligned}
			\bm{\Phi}_{\ell, s} \lesssim \bm{\Phi}_{\ell+1, s-1} + (1 + \mathbf{E}_\ell (t)) \mathbf{U}_{\ell, s-1}^2 + \mathbf{U}_{\ell, s-2}^3 + \bm{\Phi}_{\ell, s-2}^3 + (1 + \mathbf{E}_\ell^2 (t)) \mathbf{E}_\ell (t) \,.
		\end{aligned}
	\end{equation*}
	One thereby has
	\begin{equation*}
		\begin{aligned}
			\mathbf{U}_{\ell, s-1} + \bm{\Phi}_{\ell, s} \lesssim & (1 + \mathbf{E}_\ell (t)) \big[ ( \mathbf{U}_{\ell+1, s-2} + \bm{\Phi}_{\ell+1, s-1} ) + ( \mathbf{U}_{\ell+1, s-2} + \bm{\Phi}_{\ell+1, s-1} )^3 \big] \\
			& + (1 + \mathbf{E}_\ell (t)) \mathbf{U}_{\ell, s-1}^2 + (1 + \mathbf{E}_\ell^2 (t)) \mathbf{E}_\ell (t) \,.
		\end{aligned}
	\end{equation*}
	Moreover, by \eqref{U-1BND}, it can be easily implied that
	\begin{equation*}
		\begin{aligned}
			(1 + \mathbf{E}_\ell (t)) \mathbf{U}_{\ell, s-1}^2 \lesssim (1 + \mathbf{E}_\ell^5 (t)) \big[ ( \mathbf{U}_{\ell+1, s-2} + \bm{\Phi}_{\ell+1, s-1} ) + ( \mathbf{U}_{\ell+1, s-2} + \bm{\Phi}_{\ell+1, s-1} )^4 \big] \\
			+ (1 + \mathbf{E}_\ell^4 (t)) \mathbf{E}_\ell (t) \,.
		\end{aligned}
	\end{equation*}
	As a consequence, there hold
	\begin{equation*}
		\begin{aligned}
			\mathbf{U}_{\ell, s-1} + \bm{\Phi}_{\ell, s} \lesssim (1 + \mathbf{E}_\ell^5 (t)) \big[ ( \mathbf{U}_{\ell+1, s-2} + \bm{\Phi}_{\ell+1, s-1} ) + ( \mathbf{U}_{\ell+1, s-2} + \bm{\Phi}_{\ell+1, s-1} )^4 \big] \\
			+ (1 + \mathbf{E}_\ell^4 (t)) \mathbf{E}_\ell (t) \,.
		\end{aligned}
	\end{equation*}
	Denote by
	\begin{equation*}
		\begin{aligned}
			\bm{\Xi}_{\ell, s} : = \mathbf{U}_{\ell, s-1} + \bm{\Phi}_{\ell, s} \,, \quad A_\ell : = 1 + \mathbf{E}_\ell^5 (t) \,, \quad B_\ell : = (1 + \mathbf{E}_\ell^4 (t)) \mathbf{E}_\ell (t) \,.
		\end{aligned}
	\end{equation*}
	Then we have
	\begin{equation}\label{InductionRel}
		\begin{aligned}
			\bm{\Xi}_{\ell, s} \lesssim A_\ell ( \bm{\Xi}_{\ell+1, s-1} + \bm{\Xi}_{\ell+1, s-1}^4 ) + B_\ell \,.
		\end{aligned}
	\end{equation}
	As in \eqref{Monotone-l}-\eqref{Monotone-s}, $\bm{\Xi}_{\ell, s}$ admits the same properties of $\mathbf{U}_{\ell,s}$ and $\bm{\Phi}_{\ell,s}$. The quantities $A_\ell$ and $B_\ell$ have the same property of $\mathbf{E}_\ell (t)$ in \eqref{Monotone-l}. Note that the Induction Hypotheses \eqref{Hypotheses} indicate
	\begin{equation*}
		\begin{aligned}
			\bm{\Xi}_{\ell+1, s-1} \lesssim ( 1 + \mathbf{E}^{\aleph_{s-1}}_{\ell+s} (t) ) \mathbf{E}_{\ell+s} (t) \,.
		\end{aligned}
	\end{equation*}
    Then, together with the induction relation \eqref{InductionRel},
	\begin{equation*}
		\begin{aligned}
			\bm{\Xi}_{\ell, s} \lesssim & A_\ell \Big\{ ( 1 + \mathbf{E}^{\aleph_{s-1}}_{\ell+s} (t) ) \mathbf{E}_{\ell+s} (t) + ( 1 + \mathbf{E}^{\aleph_{s-1}}_{\ell+s} (t) )^4 \mathbf{E}^4_{\ell+s} (t) \Big\} + B_\ell \\
			\lesssim & A_{\ell + s} ( 1 + \mathbf{E}_{\ell+s}^{4 \aleph_{s-1} + 3} (t) ) \mathbf{E}_{\ell+s} (t) + B_{\ell+s} \\
			\lesssim & ( 1 + \mathbf{E}_{\ell+s}^{4 \aleph_{s-1} + 8} (t) ) \mathbf{E}_{\ell+s} (t) \,,
		\end{aligned}
	\end{equation*}
	where the last second inequality is derived from the facts $A_\ell \leq A_{\ell+s}$ and $B_\ell \leq B_{\ell+s}$. Observe that $\aleph_s = \tfrac{11}{3} \cdot 4^s - \tfrac{8}{3}$ satisfies $\aleph_s = 4 \aleph_{s-1} + 8$. It then follows that
	\begin{equation*}
		\begin{aligned}
			\bm{\Xi}_{\ell, s} \lesssim  ( 1 + \mathbf{E}_{\ell+s}^{\aleph_s} (t) ) \mathbf{E}_{\ell+s} (t) \,.
		\end{aligned}
	\end{equation*}
	Then the Induction Principle concludes the second bound \eqref{UPhi-s-Bnd}. The proof of Corollary \ref{Coro-UPhi-bnd} is completed.
\end{proof}

By the bounds \eqref{u-U-bnd}-\eqref{phi-Phi-bnd} and Corollary \ref{Coro-UPhi-bnd}, one knows that for $s \geq 1$,
\begin{equation*}
	\begin{aligned}
		\| \partial_t^\ell u \|^2_{s+1} + \| \partial_t^\ell p \|^2_s + \| \partial_t^\ell \phi \|^2_{s+2} \lesssim \mathbf{U}_{\ell, s-1} + \bm{\Phi}_{\ell, s} + \mathbf{E}_\ell (t) \lesssim \big( 1 + \mathbf{E}_{\ell + s}^{\aleph_s} (t) \big) \mathbf{E}_{\ell + s} (t) \,,
	\end{aligned}
\end{equation*}
and by the definition of $\mathbf{E}_\ell (t)$ in \eqref{EUPhi-ls},
\begin{equation*}
	\begin{aligned}
		\| \partial_t^\ell u \|^2_1 + \| \partial_t^\ell p \|^2 + \| \partial_t^\ell \phi \|^2_2 \lesssim \mathbf{E}_\ell (t) + \| \partial_t^\ell p \|^2_1 \lesssim \mathbf{E}_\ell (t) + \big( 1 + \mathbf{E}_{\ell + 1}^{\aleph_1} (t) \big) \mathbf{E}_{\ell + 1} (t) \,.
	\end{aligned}
\end{equation*}
As a result, for all $\ell + s \leq \Lambda$,
\begin{equation*}
	\begin{aligned}
		\sum_{\ell + s \leq \Lambda} \big( \| \partial_t^\ell u \|^2_{s+1} + \| \partial_t^\ell p \|^2_s + \| \partial_t^\ell \phi \|^2_{s+2} \big) \lesssim \sum_{\ell + s \leq \Lambda} \big( 1 + \mathbf{E}_{\ell + s}^{\aleph_s} (t) \big) \mathbf{E}_{\ell + s} (t) \\
		\lesssim \big( 1 + \mathbf{E}_\Lambda^{\aleph_\Lambda} (t) \big) \mathbf{E}_\Lambda (t) \,.
	\end{aligned}
\end{equation*}
Consequently, we summarize the following results.

\begin{lemma}[Estimates for time-spatial mixed derivatives]\label{Lmm-HSDE}
	Let $0 < T \leq \infty$ and integer $\Lambda \geq 2$. Assume that $(u,p, \phi)$ is a sufficiently smooth solution to the ACNS system \eqref{equ-1} over $t \in [0, T)$ and the smooth bounded domain $\Omega$ with the boundary conditions \eqref{BC-ACNS}. Let $\mathbf{E}_\Lambda (t)$ be given in \eqref{EUPhi-ls}.  If $\sup\limits_{0 \leq t < T} \mathbf{E}_\Lambda (t) < \infty$, then there is a constant $C > 0$ such that
	\begin{equation*}
		\sum_{\ell + s \leq \Lambda} \Big( \| \partial_t^\ell u \|^2_{s+1} + \| \partial_t^\ell p \|^2_s + \| \partial_t^\ell \phi \|^2_{s+2} \Big) (t) \leq C \big( 1 + \mathbf{E}_\Lambda^{\aleph_\Lambda} (t) \big) \mathbf{E}_\Lambda (t) \,,
	\end{equation*}
	where $\aleph_\Lambda = \tfrac{11}{3} \cdot 4^\Lambda - \tfrac{8}{3} > 0$.
\end{lemma}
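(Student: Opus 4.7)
The plan is to combine the ADN elliptic estimates \eqref{u-U-bnd}-\eqref{phi-Phi-bnd} with the control of $\mathbf{U}_{\ell,s}$ and $\bm{\Phi}_{\ell,s}$ obtained in Corollary \ref{Coro-UPhi-bnd}. All of the structural work has been laid out before the lemma: the point of the statement is simply to package those bounds, sum over the allowed indices, and absorb everything into the single functional $\mathbf{E}_\Lambda(t)$.

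First I would fix $0\le\ell\le\Lambda$ and $s\ge 1$ with $\ell+s\le\Lambda$. Applying \eqref{u-U-bnd} to $\partial_t^\ell u$ with $\partial_t^\ell p$ and \eqref{phi-Phi-bnd} to $\partial_t^\ell\phi$ yields
\begin{equation*}
\|\partial_t^\ell u\|_{s+1}^2+\|\partial_t^\ell p\|_s^2+\|\partial_t^\ell\phi\|_{s+2}^2\lesssim \|U_\ell(u,\phi)\|_{s-1}^2+\|\Phi_\ell(u,\phi)\|_s^2+\|\partial_t^\ell\phi\|_1^2,
\end{equation*}
and since $\|\partial_t^\ell\phi\|_1^2\lesssim E_\ell(t)\le\mathbf{E}_\ell(t)$, the right-hand side is $\lesssim \mathbf{U}_{\ell,s-1}+\bm\Phi_{\ell,s}+\mathbf{E}_\ell(t)$ by the definitions in \eqref{EUPhi-ls}. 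Corollary \ref{Coro-UPhi-bnd} then bounds $\mathbf{U}_{\ell,s-1}+\bm\Phi_{\ell,s}$ by $(1+\mathbf{E}_{\ell+s}^{\aleph_s}(t))\mathbf{E}_{\ell+s}(t)$, and the monotonicity $\mathbf{E}_{\ell+s}(t)\le\mathbf{E}_\Lambda(t)$ from \eqref{Monotone-l} together with $\aleph_s\le\aleph_\Lambda$ (which is immediate from the explicit formula $\aleph_s=\tfrac{11}{3}\cdot 4^s-\tfrac{8}{3}$) gives a clean bound of $(1+\mathbf{E}_\Lambda^{\aleph_\Lambda}(t))\mathbf{E}_\Lambda(t)$.

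Next I would treat the endpoint $s=0$ separately, since the ADN bound for $\partial_t^\ell u$ then requires $\|U_\ell(u,\phi)\|$ rather than a negative-index norm. For $s=0$ the Stokes estimate of Lemma \ref{Lmm-Stokes} and the Laplace estimate of Lemma \ref{Lmm-Laplace} give $\|\partial_t^\ell u\|_2^2+\|\partial_t^\ell p\|_1^2+\|\partial_t^\ell\phi\|_2^2\lesssim \|U_\ell\|^2+\|\Phi_\ell\|^2+\mathbf{E}_\ell(t)$, which the first line of Corollary \ref{Coro-UPhi-bnd} dominates by $(1+\mathbf{E}_{\ell+1}^{\aleph_1}(t))\mathbf{E}_{\ell+1}(t)$; for the lower-order piece $\|\partial_t^\ell u\|_1^2+\|\partial_t^\ell\phi\|_2^2$ only $\mathbf{E}_\ell(t)$ is needed, directly from the definition of $E_\ell(t)$ in \eqref{EjDj}. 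Summing over the finite index set $\{(\ell,s):\ell+s\le\Lambda\}$ then produces the stated inequality. I do not anticipate a real obstacle here: the lemma is essentially a summation corollary of work already done, the only mild bookkeeping being to check the monotonicity $\aleph_s\le\aleph_\Lambda$ and that the $s=0$ slice is covered by the first part of Corollary \ref{Coro-UPhi-bnd} rather than the second.
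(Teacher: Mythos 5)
Your proposal is correct and takes essentially the same route as the paper: apply the ADN bounds \eqref{u-U-bnd}--\eqref{phi-Phi-bnd} for each pair $(\ell,s)$ with $s\geq 1$ to reduce to $\mathbf{U}_{\ell,s-1}+\bm{\Phi}_{\ell,s}+\mathbf{E}_\ell(t)$, invoke Corollary \ref{Coro-UPhi-bnd}, and sum over $\ell+s\leq\Lambda$ using the monotonicity of $\mathbf{E}_{\ell+s}(t)\leq\mathbf{E}_\Lambda(t)$ and $\aleph_s\leq\aleph_\Lambda$, while the $s=0$ slice is handled (as in the paper, up to your harmless overshoot to $\|\partial_t^\ell u\|_2^2+\|\partial_t^\ell p\|_1^2$) through the energy definition plus the corollary. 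One citation slip: the control of $\mathbf{U}_{\ell,0}$ needed for the pressure at $s=0$ comes from the second inequality in \eqref{UPhi-s-Bnd} taken with $s=1$ (consistent with the exponent $\aleph_1$ and index $\ell+1$ you quote), not from the first inequality, which only bounds $\bm{\Phi}_{\ell,0}$.
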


\section{Local well-posedness of \eqref{equ-1}}\label{Sec:Local}

In this section, we prove the local well-posedness of the ACNS equations \eqref{equ-1} with boundary values \eqref{BC-ACNS} and initial data \eqref{equ-3}. We first construct the linear approximate system by iteration scheme. The key step is to prove the existence of the uniform positive time lower bound to the approximate system and the uniform energy bounds based on the a priori estimates in Lemmas \ref{Lmm-H2-t0}-\ref{Lmm-H2-tk}-\ref{Lmm-HSDE}. Finally, by the compactness arguments, we can justify the local existence results of \eqref{equ-1}.

We first construct the approximate system by the iteration as follows: for all integer $ n \geq 0$,
\begin{equation}\label{ias}
	\left\{
	\begin{array}{l}
	\rho (\phi^n) (	\partial_{t} u^{n+1}  + u^{n} \cdot \nabla u^{n+1} ) + \nabla p^{n+1} = \nabla \cdot ( \mu \nabla u^{n+1} - \lambda \nabla \phi^n \otimes \nabla\phi^n ) \,, \\[2mm]
		\qquad \qquad \nabla \cdot u^{n+1} = 0 \,, \\[2mm]
   ( \dot\phi^{n+1} = ) \ \partial_t \phi^{n+1} + u^n \cdot \nabla \phi^{n+1} = \gamma ( \lambda \Delta \phi^{n+1} - \lambda f'(\phi^n) - \rho' (\phi^n) \tfrac{|u^n|^2}{2}) \,, \\[2mm]
		( u^{n+1}, \phi^{n+1} ) |_{t=0} = ( u^{in} (x), \phi^{in} (x) ) \in \R^3 \times \R \,, \\[2mm]
		u^{n+1}|_{\partial\Omega}=0, \ \tfrac{\partial}{\partial \n} \phi^{n+1}|_{\partial\Omega}=0 \,.
	\end{array}
	\right.
\end{equation}
The iteration starts from
\begin{equation}\label{iasi}
	(u^0(t,x), \phi^0(t,x)) \equiv (u^{in}(x), \phi^{in}(x)) \,.
\end{equation}

By the standard linear theory, there hold:
\begin{lemma}\label{Lmm-AppSyst}
	Suppose that $ \Lambda \geq 2$ and the initial data $ ( u^{in}, \phi^{in} ) (x) $ satisfies $u^{in} \in H^{2\Lambda + 2} $ and $ \phi^{in} \in H^{2\Lambda + 3}$ with $- 1 \leq \phi^{in} \leq 1$. Then there is a maximal number $T^{*}_{n+1}>0$ such that the system \eqref{ias} admits a unique solution $(u^{n+1}, p^{n+1} \phi^{n+1}) (t,x)$ satisfying $-1 \leq \phi^{n+1} (t,x) \leq 1$ and
	\begin{equation*}
		\begin{aligned}
			& \partial_t^\ell u^{n+1} \in C (0, T_{n+1}^*; H^{\Lambda - \ell + 1}) \cap L^2 (0, T_{n+1}^*; H^2) \,, \partial_t^\ell u_t^{n+1} \in L^2 (0, T_{n+1}^*; L^2_{\rho (\phi^n)}) \,, \\
			& \partial_t^\ell p^{n+1} \in C (0, T_{n+1}^*; H^{\Lambda - \ell}) \,, \ \partial_t^\ell \phi^{n+1} \in C (0, T_{n+1}^*; H^{\Lambda - \ell + 2}) \,, \partial_t^\ell \phi_t^{n+1} \in L^2 (0, T_{n+1}^*; H^1)
		\end{aligned}
	\end{equation*}
    for $0 \leq \ell \leq \Lambda$.
\end{lemma}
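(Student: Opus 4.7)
\medskip

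\noindent\textbf{Proof proposal for Lemma \ref{Lmm-AppSyst}.}

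The plan is to decouple and iterate. Observe that in \eqref{ias} the equation for $\phi^{n+1}$ is a linear parabolic equation whose coefficients and source depend only on $(u^n,\phi^n)$, while the equation for $(u^{n+1},p^{n+1})$ is a linear Stokes-type system with variable density $\rho(\phi^n)$ and transport velocity $u^n$, whose source depends on $(u^n,\phi^n,\phi^{n+1})$. Hence I would first construct $\phi^{n+1}$ and then $(u^{n+1},p^{n+1})$.

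The first step is to solve
\begin{equation*}
\partial_t\phi^{n+1}+u^n\cdot\nabla\phi^{n+1}-\gamma\lambda\Delta\phi^{n+1}=-\gamma\lambda f'(\phi^n)-\gamma\rho'(\phi^n)\tfrac{|u^n|^2}{2},\qquad \tfrac{\partial}{\partial\mathbf{n}}\phi^{n+1}|_{\partial\Omega}=0,\quad \phi^{n+1}|_{t=0}=\phi^{in}.
\end{equation*}
By the induction hypothesis, $u^n$ is divergence-free, smooth in $(t,x)$ up to the regularity stated in the lemma, and vanishes on $\partial\Omega$; the source term lies in the corresponding Sobolev class. Standard linear parabolic theory on smooth bounded domains with Neumann boundary data (e.g.\ via Galerkin approximations or Ladyzhenskaya--Solonnikov--Ural'ceva estimates) produces a unique solution $\phi^{n+1}$ on some maximal interval $[0,T^*_{n+1})$ with $\phi^{n+1}\in C(0,T^*_{n+1};H^{\Lambda+2})$. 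The bound $-1\le\phi^{n+1}\le 1$ follows from the maximum/minimum principle: noting that $f'(\phi^n)=\tfrac{1}{\eps^2}(\phi^n)^2-1)\phi^n$ and $\rho'(\phi^n)\tfrac{|u^n|^2}{2}$ are given data, one tests the equation at the points where $\phi^{n+1}\mp 1$ attains its extremum and exploits the sign of the reaction term together with $-1\le\phi^n\le 1$ inherited from the previous iterate.

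The second step is to feed $\phi^{n+1}$ (or rather $\nabla\phi^n\otimes\nabla\phi^n$, which does not involve $\phi^{n+1}$) and the density $\rho(\phi^n)\in[\tfrac{\rho_1\rho_2}{\rho_1+\rho_2},\tfrac{2\rho_1\rho_2+\rho_2^2}{2(\rho_1+\rho_2)}]$ into the momentum equation and solve
\begin{equation*}
\rho(\phi^n)\partial_t u^{n+1}-\mu\Delta u^{n+1}+\nabla p^{n+1}=-\rho(\phi^n)u^n\cdot\nabla u^{n+1}-\lambda\nabla\cdot(\nabla\phi^n\otimes\nabla\phi^n),\qquad \nabla\cdot u^{n+1}=0,\quad u^{n+1}|_{\partial\Omega}=0,
\end{equation*}
with $u^{n+1}|_{t=0}=u^{in}$. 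Since $\rho(\phi^n)$ is bounded uniformly from below by a positive constant, this is a uniformly parabolic linear Stokes system with bounded coefficients. A Galerkin scheme based on the eigenfunctions of the Stokes operator, together with the ADN elliptic estimate in Lemma \ref{Lmm-Stokes} applied at each fixed time, produces a unique strong solution $(u^{n+1},p^{n+1})$ on $[0,T^*_{n+1})$ with $u^{n+1}\in C(0,T^*_{n+1};H^{\Lambda+1})\cap L^2(0,T^*_{n+1};H^2)$ and $p^{n+1}\in C(0,T^*_{n+1};H^\Lambda)$.

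To recover the claimed time-derivative regularity $\partial_t^\ell(u^{n+1},p^{n+1},\phi^{n+1})$ for $1\le\ell\le\Lambda$, I would differentiate \eqref{ias} in $t$ up to order $\ell$, read off the initial values $\partial_t^\ell u^{n+1}(0,\cdot)$ and $\partial_t^\ell\phi^{n+1}(0,\cdot)$ from the equations in terms of spatial derivatives of $(u^{in},\phi^{in})$ (which lie in $H^{2\Lambda+2-2\ell}$ and $H^{2\Lambda+3-2\ell}$ by the regularity of the initial data), and note that the boundary conditions $u^{n+1}|_{\partial\Omega}=0$ and $\tfrac{\partial}{\partial\mathbf{n}}\phi^{n+1}|_{\partial\Omega}=0$ propagate to $\partial_t^\ell u^{n+1}|_{\partial\Omega}=0$ and $\tfrac{\partial}{\partial\mathbf{n}}\partial_t^\ell\phi^{n+1}|_{\partial\Omega}=0$ as in \eqref{BC-HighTimeD}. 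The equations for $\partial_t^\ell(u^{n+1},\phi^{n+1})$ are again linear Stokes/parabolic systems of the same type, with forcing built from $\partial_t^j$ of the known data $(u^n,\phi^n)$ and from lower-order time derivatives of $(u^{n+1},\phi^{n+1})$; induction on $\ell$ combined with Lemmas \ref{Lmm-Stokes}--\ref{Lmm-Laplace} yields the stated regularity, while the $L^2_{\rho(\phi^n)}$-integrability of $\partial_t^\ell u_t^{n+1}$ and $L^2H^1$-integrability of $\partial_t^\ell\phi_t^{n+1}$ come from the standard energy identity for each linear subproblem. The main technical point is ensuring that the compatibility conditions at $t=0$ implicit in the $L^2_tH^2$-type estimates are verified, which reduces to the matching of boundary traces at $t=0$; this follows from the assumed vanishing of $u^{in}$ on $\partial\Omega$, the Neumann condition on $\phi^{in}$, and the divergence-free condition on $u^{in}$, propagated through the explicit formulas for $\partial_t^\ell(u^{n+1},\phi^{n+1})(0,\cdot)$.
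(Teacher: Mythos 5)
The paper gives essentially no proof of Lemma \ref{Lmm-AppSyst} (it simply invokes ``standard linear theory''), and the skeleton you propose is a reasonable way to make that invocation precise: the two sub-problems in \eqref{ias} are indeed fully decoupled (the momentum forcing contains only $\nabla\phi^n\otimes\nabla\phi^n$, not $\phi^{n+1}$), so one can solve the linear advection--diffusion equation for $\phi^{n+1}$ with Neumann data by parabolic theory, then the variable-density linear Stokes system for $(u^{n+1},p^{n+1})$ by Galerkin plus the ADN estimate of Lemma \ref{Lmm-Stokes}, and finally obtain the regularity of $\partial_t^\ell(u^{n+1},p^{n+1},\phi^{n+1})$ by differentiating in time, using \eqref{BC-HighTimeD} and inducting on $\ell$. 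Up to the compatibility-condition issue you flag but do not resolve (which the paper also ignores), that part of your outline is consistent with what the paper intends.

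The genuine gap is your justification of $-1\le\phi^{n+1}\le 1$. In the iterated equation the terms $f'(\phi^n)$ and $\rho'(\phi^n)\tfrac{|u^n|^2}{2}$ are evaluated at the \emph{previous} iterate, so they enter as a prescribed source with no sign coupling to $\phi^{n+1}$; there is no ``reaction term'' whose sign you can exploit at an extremum of $\phi^{n+1}$. Concretely, at an interior space--time maximum where $\phi^{n+1}>1$ the usual argument only yields $0\le -\gamma\lambda f'(\phi^n)-\gamma\rho'(\phi^n)\tfrac{|u^n|^2}{2}$ at that point, and this right-hand side can be strictly positive even though $-1\le\phi^n\le1$: for instance if $\phi^n=\tfrac12$ and $u^n=0$ there, then $-\gamma\lambda f'(\phi^n)=\tfrac{3\gamma\lambda}{8\varepsilon^2}>0$ by \eqref{equ-2}, so no contradiction arises. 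The favorable structure used for the nonlinear problem (that $f'(\phi)$ and, by \eqref{Average-rho-2}, $\rho'(\phi)$ have the right sign precisely where $|\phi|>1$) is exactly what is destroyed by linearizing at $\phi^n$. Hence your maximum-principle step fails as written; to get the pointwise bound for the iterates one needs a different device, e.g.\ keeping $f'(\phi^{n+1})$ (or a truncation of $f'$ and $\rho'$ outside $[-1,1]$) implicit in the scheme, or dropping the pointwise bound at the approximate level and recovering it only for the limit solution. The paper asserts this bound without argument, so the difficulty is inherited from the lemma itself, but your proposed proof of it does not close.
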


We remark that $T^{*}_{n+1} \leq T^{*}_{n}$.

The next goal is to prove existence of the uniform lower bound $T>0$ of the time sequence $T_{n+1}^*$ of the approximate system \eqref{ias}-\eqref{iasi}. We introduce the approximate energy functional $E_{j,n+1}(t)$ and the approximate dissipation functional $D_{j, n+1}(t)$ as follows:
\begin{equation*}
	\begin{aligned}
		 {E}_{j,n+1}(t) = & \| \partial_t^{j} u^{n+1} \|_{L^2_{\rho(\phi^n)}}^2 + \mu \| \nabla \partial_t^{j} u^{n+1} \|^2 + \| \partial_t^{j} \phi^{n+1} \|^2  \\
		 & + ( \gamma \lambda + 1 ) \| \nabla \partial_t^{j}\phi ^{n+1}\|^2 + \gamma\lambda\| \Delta \partial_t^{j} \phi^{n+1} \|^2 \,,
	\end{aligned}
\end{equation*}
and
\begin{align*}
   {D}_{j,n+1}(t) = & \mu\| \nabla \partial_t^{j} u^{n+1} \|^2 +\| \partial_t^{j} u^{n+1}_t \|_{L^2_{\rho(\phi^n)}}^2 + \gamma \lambda \| \nabla \partial_t^{j} \phi^{n+1} \|^2 + \| \partial_t^{j} \phi^{n+1}_t \|^2 + \gamma \lambda \| \Delta \partial_t^{j} \phi^{n+1} \|^2 \\
   & + \| \nabla \partial_t^{j} \phi^{n+1}_t \|^2 + \kappa \| \partial_t^{j} \Delta u^{n+1} \|^2 + \kappa \|\partial_t^{j} \nabla \Delta \phi^{n+1} \|^2 + \kappa \| \partial_t^j p^{n+1} \|^2_1 \,.
\end{align*}
Here $j \geq 0$, and $\kappa > 0$ is given in Lemma \ref{Lmm-H2-t0} and Lemma \ref{Lmm-H2-tk}.

\begin{lemma}\label{Lmm-UnfBnd}
Let $(u^{n+1}, p^{n+1}, \phi^{n+1})$ be the solution to the iterative approximate system \eqref{ias}-\eqref{iasi} constructed in Lemma \ref{Lmm-AppSyst}. Then, for sufficiently large $M \gg E_\Lambda^{in} : = \| u^{in} \|^2_{2 \Lambda + 2} + \| \phi^{in} \|^2_{2 \Lambda + 3}$, there is a $T>0$, depending only on the all coefficients, $M$ and $E_\Lambda^{in}$, such that
\begin{equation}
	\begin{aligned}
		\sum_{j=0}^\Lambda E_{j,n+1} (t) + \sum_{j=0}^\Lambda \int_0^t D_{j,n+1}(t) dt \leq M
	\end{aligned}
\end{equation}
holds for all $n \geq 0$ and $t \in [0, T]$.
\end{lemma}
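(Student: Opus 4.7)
The plan is to prove Lemma \ref{Lmm-UnfBnd} by induction on the iteration index $n$, using energy estimates for the linearized system \eqref{ias} that parallel those derived in Lemmas \ref{Lmm-H2-t0}, \ref{Lmm-H2-tk}, and \ref{Lmm-HSDE} for the full nonlinear system.

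First I would treat the base case $n=0$: since $(u^0,\phi^0)\equiv(u^{in},\phi^{in})$, the system \eqref{ias} for $(u^1,p^1,\phi^1)$ is genuinely linear (with coefficients frozen at the initial data), and standard linear parabolic/Stokes theory together with the initial regularity $u^{in}\in H^{2\Lambda+2}$, $\phi^{in}\in H^{2\Lambda+3}$ gives $\sum_{j=0}^{\Lambda} E_{j,1}(0)\leq C E_\Lambda^{in}$. Crucially, the initial values of $\partial_t^j u^{n+1}$ and $\partial_t^j \phi^{n+1}$ are \emph{not} prescribed by \eqref{iasi}; they must be recovered by differentiating \eqref{ias} in time $j-1$ times, evaluating at $t=0$, and reading off $\partial_t^j u^{n+1}|_{t=0}$ and $\partial_t^j \phi^{n+1}|_{t=0}$ as spatial polynomials in $(u^{in},\phi^{in})$ (using also that $\partial_t^{j-1}\nabla p^{n+1}|_{t=0}$ is determined from the divergence-free condition via a Stokes-type problem). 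The $\partial_t^j(\cdot)|_{\partial\Omega}=0$ boundary conditions of \eqref{BC-HighTimeD} are automatically consistent because $(u^{in},\phi^{in})$ satisfy them and the equations preserve the structure. A direct count, combined with Lemma \ref{Lmm-HSDE} applied at the linear level, shows $\sum_{j=0}^{\Lambda} E_{j,n+1}(0)\leq C_\star E_\Lambda^{in}$ uniformly in $n$.

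Next, for the inductive step, assume $\sum_{j=0}^\Lambda E_{j,n}(t)\leq M$ on $[0,T]$ and derive the analogue of \eqref{E0D0-bnd}--\eqref{EkDk-bnd} for $(u^{n+1},p^{n+1},\phi^{n+1})$. The calculations of Lemmas \ref{Lmm-H2-t0}--\ref{Lmm-H2-tk} go through verbatim after separating the ``old'' iterate $n$ from the ``new'' iterate $n+1$ in the commutators $\partial_t^k[\rho(\phi^n)(u_t^{n+1}+u^n\cdot\nabla u^{n+1})]$, $\partial_t^k[\nabla\phi^n\otimes\nabla\phi^n]$, $\partial_t^k[u^n\cdot\nabla\phi^{n+1}]$, $\partial_t^k f'(\phi^n)$, $\partial_t^k[\rho'(\phi^n)|u^n|^2/2]$. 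Each such term produces, after Lemma \ref{Lmm-K}-type bounds and the Stokes/Laplace estimates of Lemmas \ref{Lmm-Stokes}--\ref{Lmm-Laplace}, a right-hand side of the schematic form
\begin{equation*}
\tfrac{d}{dt}\!\!\sum_{j=0}^{\Lambda}\!E_{j,n+1}(t)+\!\!\sum_{j=0}^{\Lambda}\!D_{j,n+1}(t)\leq P_1(M)\!\!\sum_{j=0}^{\Lambda}\!E_{j,n+1}(t)+P_2(M),
\end{equation*}
where $P_1,P_2$ are polynomials in $M$ independent of $n$ (absorbing all $\partial_t^j\rho(\phi^n)$, $\partial_t^j\phi^n$, $\partial_t^j u^n$ factors by the inductive hypothesis and Sobolev embedding). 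Gronwall's inequality then yields
\begin{equation*}
\sum_{j=0}^\Lambda E_{j,n+1}(t)+\int_0^t\!\!\sum_{j=0}^\Lambda D_{j,n+1}(s)\,ds\leq\bigl(C_\star E_\Lambda^{in}+P_2(M)t\bigr)e^{P_1(M)t}
\end{equation*}
on $[0,T]$. Choosing $M\gg C_\star E_\Lambda^{in}$ and then $T=T(M,E_\Lambda^{in})>0$ small enough so that the right-hand side is $\leq M$ closes the induction and gives the claimed bound uniformly in $n$.

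The main obstacle I anticipate is bookkeeping the mixed time-space derivatives from the linearized commutators: the coefficients now depend on the previous iterate $\phi^n$, so terms like $\partial_t^k[\rho(\phi^n)u_t^{n+1}]$ generate factors of $\partial_t^{a}\rho(\phi^n)$ that must be bounded using only the inductive hypothesis on $\sum_j E_{j,n}\leq M$ (not on $n+1$), while the factor $\partial_t^{k-a}u_t^{n+1}$ has to be absorbed into the dissipation $\|\partial_t^k u_t^{n+1}\|_{L^2_{\rho(\phi^n)}}^2$. One must also reprove the Stokes/Laplace bound \eqref{H2tk-7} for the linearized system, noting that the equation $-\mu\Delta\partial_t^k u^{n+1}+\nabla\partial_t^k p^{n+1}=U_k^{(n)}$ now has $U_k^{(n)}$ depending on both iterates; Lemma \ref{Lmm-Stokes} still applies but the $L^2$ norm of $U_k^{(n)}$ must be split so the $(n+1)$-terms contribute absorbable dissipation and the $n$-terms contribute only $P_j(M)$. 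Once the two indices are carefully separated, the argument reduces to a standard Gronwall/continuity argument and the uniform time $T$ follows.
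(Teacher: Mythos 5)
Your strategy is sound and reaches the same conclusion, but it takes a genuinely different route from the paper's. You run an explicit induction on the iteration index $n$ (assuming $\sum_j E_{j,n}\leq M$ on $[0,T]$), exploit the linearity of \eqref{ias} in the new iterate to obtain a differential inequality that is linear in $\sum_j E_{j,n+1}$ with $M$-dependent coefficients, and close by Gr\"onwall plus smallness of $T$. The paper never sets up an induction on $n$: it writes the analogues \eqref{E0-n+1}--\eqref{Ek-n+1} of the nonlinear a priori estimates for the $(n+1)$-th iterate (with right-hand sides expressed through the $E_{j,n+1}$ alone), introduces a hierarchy of thresholds $M_0,\dots,M_\Lambda$ and bootstrap times $T^k_{n+1}$ as in \eqref{Tk-n+1}, and integrates the polynomial inequalities level by level in $k$, obtaining a uniform lower bound $t_\star$ for all $T^k_{n+1}$ by a continuity argument, with no exponential factors and no summation over $k$. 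Your version is arguably more careful about where the previous iterate actually enters (the terms $\nabla\phi^n\otimes\nabla\phi^n$, $f'(\phi^n)$, $\rho'(\phi^n)\tfrac{|u^n|^2}{2}$ and the transport by $u^n$ genuinely require a bound on iterate $n$, which the paper's displayed inequalities suppress), and about the initial values $E_{j,n+1}(0)$, which you correctly recover by time-differentiating the equations at $t=0$; what the paper's hierarchy buys is a closure that avoids any Gr\"onwall exponential and treats the $k$-levels successively.

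One caveat: your schematic summed inequality is not literally what the estimates produce. As in \eqref{EkDk-bnd} and \eqref{H2tk-3}, the ADN step that controls $\|\Delta\partial_t^k u^{n+1}\|^2+\|\nabla\Delta\partial_t^k\phi^{n+1}\|^2$ generates lower-order \emph{new-iterate} dissipation $\sum_{j\leq k-1}D_{j,n+1}$ on the right (through $\partial_t^j\rho(\phi^n)\,\partial_t^{k-j}u_t^{n+1}$), multiplied by $M$-dependent constants; with unit weights these cannot be absorbed into $\sum_j D_{j,n+1}$ on the left, so the single summed Gr\"onwall inequality does not follow as stated. You must either close the estimates successively in $k$ (as the paper does via \eqref{Tk-n+1} and the ensuing level-by-level integration) or use a weighted sum $\sum_k\chi_k E_{k,n+1}$ with $\chi_k$ decaying fast enough depending on $M$, in the spirit of Lemma \ref{Lmm-HtDEst}. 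This is a repairable bookkeeping point, not a flaw in the overall approach.
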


\begin{proof}
	From the similar arguments in Lemma \ref{Lmm-H2-t0} and Lemma \ref{Lmm-H2-tk}, one easily knows that for all $t \in [0, T_{n+1}^* )$,
	\begin{equation}\label{E0-n+1}
		\begin{aligned}
			\tfrac{d}{d t} E_{0, n+1} (t) + D_{0, n+1} (t) \leq C \big( 1 + E_{0, n+1}^{12} (t) \big) E_{0, n+1} (t) \,,
		\end{aligned}
	\end{equation}
	and
	\begin{equation}\label{Ek-n+1}
		\begin{aligned}
			\tfrac{d}{d t} E_{k, n+1} (t) + D_{k, n+1} (t) \leq & C \sum_{0 \leq j \leq k} \big( 1 + E_{j, n+1}^{12} (t) \big) E_{j, n+1} (t) \\
			& + C \sum_{0 \leq j \leq k-1} (1 + E_{j, n+1}^2 (t)) D_{j, n+1} (t)
		\end{aligned}
	\end{equation}
	with $1 \leq k \leq \Lambda$. Note that $E_{k, n+1} (0) \leq C_k E_\Lambda^{in}$ for $0 \leq k \leq \Lambda$ and some $C_k > 0$, where the quantity $E_\Lambda^{in}$ is defined in Theorem \ref{th1}. We will take $M_0$, $M_1$, $\cdots$, $M_\Lambda$ such that
	\begin{equation}\label{Mk}
		\begin{aligned}
			M = \sum_{k=0}^\Lambda M_k \,, \quad M_k \gg C_k E_\Lambda^{in} \geq  E_{k,n+1}(0) \geq 0 \,, \quad k = 0, 1, \cdots, \Lambda \,.
		\end{aligned}
	\end{equation}
	For $k=0, 1, \cdots, \Lambda$, define
	\begin{equation}\label{Tk-n+1}
		\begin{aligned}
			T_{n+1}^k : = \sup \Big\{ \tau \in [0, T_{n+1}^{k-1} ); \sup_{t\in[0,\tau]} E_{k, n+1} (t) + \int_0^\tau D_{k, n+1} (t) d \tau \leq M_k \Big\} \geq 0 \,,
		\end{aligned}
	\end{equation}
	where we have used the convection $T_{n+1}^{-1} : = T_{n+1}^* > 0$. It is easy to see that
	\begin{equation*}
		\begin{aligned}
			0 \leq T_{n+1}^\Lambda \leq \cdots \leq T_{n+1}^1 \leq T_{n+1}^0 \leq T_{n+1}^* \,.
 		\end{aligned}
	\end{equation*}

	By the continuity of the functionals $E_{k,n+1}(t)$ guaranteed by Lemma \ref{Lmm-AppSyst}, the fact \eqref{Mk} implies
	\begin{equation}\label{Tk-n+1+0}
		\begin{aligned}
			T_{n+1}^k > 0 \quad \textrm{for } k =0, 1, \cdots, \Lambda \,.
		\end{aligned}
	\end{equation}
	Then, \eqref{E0-n+1} and \eqref{Tk-n+1} indicate that
	\begin{equation*}
		\begin{aligned}
			E_{0, n+1} (t) + \int_0^t D_{0, n+1} (t') d t' \leq E_{0, n+1} (0) + P_0 t \leq C_0 E_\Lambda^{in} + P_0 t \quad (\forall t \in [0, T_{n+1}^0 )) \,,
		\end{aligned}
	\end{equation*}
	where $P_0 : = C (1 + M_0^{12}) M_0 > 0$. Together with \eqref{Ek-n+1} and \eqref{Tk-n+1}, we inductively establish that for $k = 1, \cdots, \Lambda$,
	\begin{equation*}
		\begin{aligned}
			E_{k, n+1} (t) + \int_0^t D_{k, n+1} (t') d t' \leq & E_{k, n+1} (0) + Q_{k-1} \sum_{0 \leq j \leq k-1}  E_{j, n+1} (0) + P_k t \\
			\leq & C_k E_\Lambda^{in} + Q_{k-1} \sum_{0 \leq j \leq k-1} C_j E_\Lambda^{in} + P_k t \quad (\forall t \in [0, T_{n+1}^k ) )
		\end{aligned}
	\end{equation*}
	where the constants $Q_{k-1} = Q_j (M_0, \cdots, M_{k-1}) > 0$ and $P_k = P_k (M_0, \cdots, M_k) > 0$.
	
	We now take
	\begin{equation*}
		\begin{aligned}
			M_0 > \Upsilon_0 : = C_0 E_\Lambda^{in} \,, \ M_k > \Upsilon_k : = C_k E_\Lambda^{in} + Q_{k-1} \sum_{0 \leq j \leq k-1} C_j E_\Lambda^{in} \,, \ k =1,2, \cdots , \Lambda \,.
		\end{aligned}
	\end{equation*}
	We then choose
	\begin{equation*}
		\begin{aligned}
			t_\star : = \min \big\{ \tfrac{M_k - \Upsilon_k}{2 P_k} ; k = 0, 1, \cdots, \Lambda \big\} > 0 \,,
		\end{aligned}
	\end{equation*}
	which means that $P_k t < \tfrac{M_k - \Upsilon_k}{2} $ for $t \in [0, t_\star )$ and $k = 0, 1, \cdots, \Lambda$. Consequently, one has
	\begin{equation*}
		\begin{aligned}
			E_{k, n+1} (t) + \int_0^t D_{k, n+1} (t') d t' \leq \Upsilon_k + P_k t < \Upsilon_k + \tfrac{M_k - \Upsilon_k}{2} = \tfrac{M_k + \Upsilon_k}{2} < M_k
		\end{aligned}
	\end{equation*}
	for $k = 0, 1, \cdots, \Lambda$ and $0 \leq t < t_\star$. By the definitions of $T_{n+1}^k$ in \eqref{Tk-n+1}, one sees
	\begin{equation*}
		\begin{aligned}
			T_{n+1}^k \geq t_\star > 0 \,, \ k = 0, 1, \cdots, \Lambda \,.
		\end{aligned}
	\end{equation*}
	We also take $M = M_0 + \cdots + M_\Lambda > \Upsilon_0 + \cdots \Upsilon_\Lambda$. Consequently, the conclusions in Lemma \ref{Lmm-UnfBnd} hold.
\end{proof}

\noindent{\bf Proof of the Theorem\ref{th1}: Local well-posedness.}
By Lemma \ref{Lmm-UnfBnd}, we know that for any fixed $M \gg E^{in}_\Lambda$ given in Lemma \ref{Lmm-UnfBnd}, there is a $T>0$ such that for all integer $n\geq 0$ and $t\in[0,T]$,
\begin{equation}\label{ED-bnd-M}
	\begin{split}
		\sum_{j=0}^\Lambda E_{j,n+1} (t) + \sum_{j=0}^\Lambda \int_0^t D_{j,n+1}(t) dt \leq M \,.
	\end{split}
\end{equation}
Moreover, by the similar arguments in Lemma \ref{Lmm-HSDE} and the above uniform bound \eqref{ED-bnd-M}, one has
\begin{equation*}
	\begin{aligned}
		\sum_{\ell + s \leq \Lambda} \Big( \| \partial_t^\ell u^{n+1} \|^2_{s+1} + \| \partial_t^\ell p^{n+1} \|^2_s + \| \partial_t^\ell \phi^{n+1} \|^2_{s+2} \Big) (t) \leq C (\Lambda, M) < \infty
	\end{aligned}
\end{equation*}
uniformly in $t \in [0, T]$ and $n \geq 0$. Then by compactness arguments and Arzela-Ascoli Theorem, we obtain that the system \eqref{equ-1}-\eqref{equ-2} with boudnary conditions \eqref{BC-ACNS} admits a solution $(u, p, \phi) (t,x)$ satisfying
\begin{equation*}
	\begin{aligned}
		& \partial_t^\ell u \in L^\infty (0, T; H^{\Lambda - \ell + 1}) \cap L^2 (0, T; H^2) \,, \partial_t^\ell u_t \in L^2 (0, T; L^2_{\rho (\phi)}) \,, \\
		& \partial_t^\ell p \in L^\infty (0, T; H^{\Lambda - \ell}) \,, \ \partial_t^\ell \phi \in L^\infty (0, T; H^{\Lambda - \ell + 2}) \,, \partial_t^\ell \phi_t \in L^2 (0, T; H^1)
	\end{aligned}
\end{equation*}
for $0 \leq \ell \leq \Lambda$. Moreover, there hold
\begin{equation*}
	\begin{split}
		\sum_{j=0}^\Lambda E_j (t) + \sum_{j=0}^\Lambda \int_0^t D_j (t) dt \leq M
	\end{split}
\end{equation*}
and
\begin{equation}\label{Energy-Bnd}
	\begin{aligned}
		\sum_{\ell + s \leq \Lambda} \Big( \| \partial_t^\ell u \|^2_{s+1} + \| \partial_t^\ell p \|^2_s + \| \partial_t^\ell \phi \|^2_{s+2} \Big) (t) \leq C (\Lambda, M)
	\end{aligned}
\end{equation}
uniformly in $t \in [0,T]$. Since $-1 \leq \phi^{in} (x) \leq 1$, the maximal principle of the parabolic equation shows that $- 1 \leq \phi (t,x) \leq 1$. Hence the existence result in Theorem \ref{th1} holds.

Then we will prove the uniqueness of the solution to \eqref{equ-1}-\eqref{equ-2} with boundary conditions \eqref{BC-ACNS}. Assume that $(u_1, p_1, \phi_1)$ and $(u_1, p_1, \phi_1)$ are the two solutions to \eqref{equ-1}-\eqref{equ-2}-\eqref{BC-ACNS}. Denote by
\begin{equation*}
	\begin{aligned}
		u^d : = u_1 - u_2 \,, \ p^d : = p_1 - p_2 \,, \ \phi^d : = \phi_1 - \phi_2 \,.
	\end{aligned}
\end{equation*}
Then $(u^d, p^d, \phi^d)$ subjects to the following system
\begin{equation}\label{Diff-eq}
	\left\{
	    \begin{aligned}
	    	\rho (\phi_1) \big( \partial_t u^d + u^d \cdot \nabla u_1 + u_2 \cdot \nabla u^d \big) + & [ \rho (\phi_1) - \rho (\phi_2) ] ( \partial_t u_2 + u_2 \cdot \nabla u_2 ) + \nabla p^d \\
	    	= & \mu \Delta u^d - \lambda \nabla \cdot ( \nabla \phi^d \otimes \nabla \phi_1 + \nabla \phi_2 \otimes \nabla \phi^d ) \,, \\
	    	\nabla \cdot u^d = & 0 \,, \\
	    	\partial_t \phi^d + u^d \cdot \nabla \phi_1 + u_2 \cdot \nabla \phi^d = & \gamma \lambda \Delta \phi^d - \gamma \lambda [f' (\phi_1) - f' (\phi_2)] \\
	    	- \gamma [\rho' & (\phi_1) - \rho' (\phi_2) ] \tfrac{|u_1|^2}{2} - \tfrac{1}{2} \gamma \rho' (\phi_2) (u_1 + u_2) \cdot u^d \,, \\
	    	u^d |_{\partial \Omega} = & 0 \,, \quad \tfrac{\partial}{\partial \n} \phi^d |_{\partial \Omega} = 0 \,, \\
	    	u^d |_{t=0} = & 0 \,, \quad \phi^d |_{t=1} = 0 \,.
	    \end{aligned}
	\right.
\end{equation}
By multiplying $u^d$ by the $u^d$-equation and multiplying $\phi^d + \Delta \phi^d$ by the $\phi^d$-equation in \eqref{Diff-eq}, one easily has
\begin{equation*}
	\begin{aligned}
		\tfrac{1}{2} \tfrac{d}{d t} \big( \| u^d \|^2_{L^2_{\rho (\phi_1)}} + \| \phi^d \|^2 + \| \nabla \phi^d \|^2 \big) + \mu \| \nabla u^d \|^2 + \gamma \lambda \big( \| \nabla \phi^d \|^2 + \| \Delta \phi^d \|^2 \big) = R_1 + R_2 + R_3 \,,
	\end{aligned}
\end{equation*}
where
\begin{equation*}
	\begin{aligned}
		R_1 = & \tfrac{1}{2} \langle \partial_t \rho (\phi_1) + \nabla \cdot ( \rho (\phi_1) u_2 ) , |u^d|^2 \rangle + \lambda \langle \nabla \phi^d \otimes \nabla \phi_1 + \nabla \phi_2 \otimes \nabla \phi^d , \nabla u^d \rangle \\
		& - \langle \rho (\phi_1) u^d \cdot \nabla u_1 + [ \rho (\phi_1) - \rho (\phi_2) ] ( \partial_t u_2 + u_2 \cdot \nabla u_2 ) , u^d \rangle \,,
	\end{aligned}
\end{equation*}
and
\begin{equation*}
	\begin{aligned}
		R_2 = - \langle u^d \cdot \nabla \phi_1 + \gamma \lambda [ f' (\phi_1) - f' (\phi_2) ] + \tfrac{\gamma}{2} [\rho' (\phi_1) - \rho' (\phi_2)] |u_1|^2 + \tfrac{\gamma}{2} \rho' (\phi_2) (u_1 + u_2) \cdot u^d , \phi^d \rangle \,,
	\end{aligned}
\end{equation*}
and
\begin{equation*}
	\begin{aligned}
		R_3 = & \langle u^d \cdot \nabla \phi_1 + u_2 \cdot \nabla \phi^d  + \gamma \lambda [ f' (\phi_1) - f' (\phi_2) ] , \Delta \phi^d \rangle \\
		& + \tfrac{\gamma}{2} \langle [\rho' (\phi_1) - \rho' (\phi_2)] |u_1|^2 + \rho' (\phi_2) (u_1 + u_2) \cdot u^d , \Delta \phi^d \rangle \,.
	\end{aligned}
\end{equation*}
Together with the bound \eqref{Energy-Bnd}, the H\"older inequality implies that
\begin{equation*}
	\begin{aligned}
		R_1 \leq & \tfrac{\mu}{2} \| \nabla u^d \|^2 + C \| u^d \|^2_{L^2_{\rho (\phi_1)}} + C \| \phi^d \|^2 + C \| \nabla \phi^d \|^2 \,, \\
		R_2 \leq & C \| u^d \|^2_{L^2_{\rho (\phi_1)}} + C \| \phi^d \|^2 + C \| \nabla \phi^d \|^2 \,, \\
		R_3 \leq & \tfrac{\gamma \lambda}{2} \| \Delta \phi^d \|^2 + C \| u^d \|^2_{L^2_{\rho (\phi_1)}} + C \| \phi^d \|^2 + C \| \nabla \phi^d \|^2 \,.
	\end{aligned}
\end{equation*}
Consequently, we have
\begin{equation*}
	\begin{aligned}
		\tfrac{d}{d t} \mathscr{E}_d (t) \leq C \mathscr{E}_d (t)
	\end{aligned}
\end{equation*}
for $t \in [0, T]$, where $\mathscr{E}_d (t) = \| u^d \|^2_{L^2_{\rho (\phi_1)}} + \| \phi^d \|^2 + \| \nabla \phi^d \|^2 $. Note that $\mathscr{E}_d (0) = 0$. The Gr\"onwall inequality implies
\begin{equation*}
	\begin{aligned}
		\mathscr{E}_d (t) \leq \mathscr{E}_d (0) e^{C t} = 0 \,,
	\end{aligned}
\end{equation*}
which means that $u_1 = u_2$ and $\phi_1 = \phi_2$. Then the first equation in \eqref{Diff-eq} reduces to
\begin{equation*}
	\begin{aligned}
		\nabla p^d = 0 \,.
	\end{aligned}
\end{equation*}
Thus $p_1 = p_2 + C$ for any constant $C$. Namely, the pressure $p$ is unique up to a constant. The proof of Theorem $\ref{th1}$ is finished.

\section{Global stability near $(0, \pm 1)$}\label{Sec:Global}

In this section, we will prove the global classical existence and time decay rate of the ACNS system near the constant equilibrium $(0, \pm 1)$. More precisely, we prove the global solution to \eqref{g-equ-1}-\eqref{g-equ-3} with small initial data \eqref{g-equ-2}. Furthermore, the exponetial decay $e^{- c_\# t}$ of the global solution is also gained. The key point is that the term $ \tfrac{\gamma \lambda}{\eps^2} f' (\phi)$ in the $\phi$-equation of \eqref{equ-1} will generate an additional damping term $ \tfrac{2 \gamma \lambda}{\eps^2} \varphi$ under the perturbation $\phi = \varphi \pm 1$. With this damping structure, one can derive the uniform global-in-times energy estimates of the fluctuated system \eqref{g-equ-1}-\eqref{g-equ-3} withe intial data \eqref{g-equ-2}. The process of deriving the global estimates is similar to the a priori estimates in Section \ref{Sec:Apriori}. Thus we will introduce the energy functional $\mathcal{E}_j (t)$ and dissipation $\mathcal{D}_j (t)$ as follows: for $j \geq 0$,
\begin{equation}\label{GED-j}
	\begin{split}
		\mathcal{E}_j(t) = & \| \partial_t^{j} u \|_{L^2_{\varrho(\varphi)}}^2 + \mu \| \nabla \partial_t^{j} u \|^2 + \mu_0 \| \partial_t^{j} \varphi \|^2 + \mu_1 \| \nabla \partial_t^{j}\varphi \|^2  + \gamma\lambda\| \Delta \partial_t^{j} \varphi \|^2 , \\
		\mathcal{D}_j (t) = & \mu\| \nabla \partial_t^{j} u \|^2 + \| \partial_t^j u_t \|_{L^2_{\varrho(\varphi)}}^2 + \tfrac{ 2 \gamma \lambda }{\varepsilon^2}\| \partial_t^{j} \varphi \|^2 + \mu_2  \| \nabla \partial_t^{j} \varphi \|^2  + \| \partial_t^j \varphi_t \|^2 \\
		& + \gamma \lambda \| \Delta \partial_t^{j} \varphi \|^2 + \| \nabla \partial_t^j \varphi_t \|^2 + \kappa_g \| \Delta \partial_t^{j} u \|^2 + \kappa_g \| \nabla \Delta \partial_t^{j} \varphi \|^2 + \kappa_g \| \partial_t^j p \|^2_1 \,,
	\end{split}
\end{equation}
where
\begin{equation}\label{mu012}
	\begin{aligned}
		\mu_0 = 1 + \tfrac{2 \gamma\lambda}{\varepsilon^2} > 0 \,, \ \mu_1 = 1 + \gamma \lambda + \tfrac{2 \gamma\lambda}{\varepsilon^2} > 0 \,, \ \mu_2 = \gamma \lambda + \tfrac{ 2 \gamma\lambda}{\varepsilon^2} > 0 \,,
	\end{aligned}
\end{equation}
and the small constant $\kappa_g > 0$ in $\mathcal{D}_j (t)$ will be determined later.

\begin{lemma}\label{Lmm-HtDEst}
Let integer $\Lambda \geq 2$. Assume that $(u, p, \phi)$ is the solution to the system \eqref{equ-1}-\eqref{equ-3} on $[0, T]$ constructed in Theorem \ref{th1} and $\phi = \varphi \pm 1$. Then there exist small $\kappa_g > 0$ in $\mathcal{D}_j (t)$ and some positive constants $C > 0$, $\chi_k, \vartheta_k > 0$ $(0 \leq k \leq \Lambda)$, depending only on the all coefficients, $\Lambda$ and $\Omega$, such that
\begin{equation*}
	\begin{aligned}
		\tfrac{d}{d t} \mathfrak{E}_\Lambda (t) + \mathfrak{D}_\Lambda (t) \leq C ( 1 + \mathfrak{E}_\Lambda^\frac{5}{2} (t) ) \mathfrak{E}_\Lambda^\frac{1}{2} (t) \mathfrak{D}_\Lambda (t) \,,
	\end{aligned}
\end{equation*}
where
\begin{equation*}
	\begin{aligned}
		\mathfrak{E}_\Lambda (t) = \sum_{0 \leq k \leq \Lambda} \chi_k \mathcal{E}_k (t) \,, \quad \mathfrak{D}_\Lambda (t) = \sum_{0 \leq k \leq \Lambda} \vartheta_k \mathcal{D}_k (t) \,.
	\end{aligned}
\end{equation*}
\end{lemma}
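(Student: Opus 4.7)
The plan is to adapt the $k$-level arguments of Lemma \ref{Lmm-H2-t0} and Lemma \ref{Lmm-H2-tk} to the shifted system \eqref{g-equ-1}--\eqref{g-equ-3}, and then assemble the resulting inequalities for $0 \leq k \leq \Lambda$ with carefully chosen positive weights $\chi_k,\vartheta_k$. First I would apply $\partial_t^k$ to \eqref{g-equ-1}; since $u|_{\partial\Omega}=0$ and $\tfrac{\partial\varphi}{\partial\n}|_{\partial\Omega}=0$ are time-independent, the boundary identities $\partial_t^k u|_{\partial\Omega}=0$ and $\tfrac{\partial}{\partial\n}\partial_t^k\varphi|_{\partial\Omega}=0$ propagate as in \eqref{BC-HighTimeD}. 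Testing the momentum equation by $\partial_t^k u + \partial_t^k u_t$ produces, via integration by parts (the divergence-free test functions eliminate the pressure and the Dirichlet condition kills the boundary integral from the Laplacian), the contribution $\tfrac{1}{2}\tfrac{d}{dt}(\|\partial_t^k u\|_{L^2_{\varrho}}^2 + \mu\|\nabla\partial_t^k u\|^2) + \mu\|\nabla\partial_t^k u\|^2 + \|\partial_t^k u_t\|^2_{L^2_{\varrho}}$ on the left, exactly as in \eqref{H2tk-2}, plus the nonlinear tensors $\partial_t^k[\varrho(\varphi)(u\cdot\nabla u)]$, $\partial_t^k(\nabla\varphi\otimes\nabla\varphi)$, and the weight commutators $\sum_{j\geq 1}C_k^j\langle\partial_t^j\varrho(\varphi)\partial_t^{k-j}u_t,\,\partial_t^k u+\partial_t^k u_t\rangle$.

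The essential new structure is the linear damping $\tfrac{2\gamma\lambda}{\varepsilon^2}\varphi$ in the $\varphi$-equation, which accounts for the constants $\mu_0,\mu_1,\mu_2$ in \eqref{mu012} and the extra dissipation in $\mathcal{D}_j$. I would test the $\varphi$-equation by $\mu_0\partial_t^k\varphi + \partial_t^k\varphi_t - \gamma\lambda\Delta\partial_t^k\varphi - \Delta\partial_t^k\varphi_t$: the damping paired with $\partial_t^k\varphi$ yields $\tfrac{2\gamma\lambda}{\varepsilon^2}\|\partial_t^k\varphi\|^2$, paired with $-\Delta\partial_t^k\varphi$ yields $\tfrac{2\gamma\lambda}{\varepsilon^2}\|\nabla\partial_t^k\varphi\|^2$ after one integration by parts against the Neumann condition, and paired with $\partial_t^k\varphi_t$ yields $\tfrac{\gamma\lambda}{\varepsilon^2}\tfrac{d}{dt}\|\partial_t^k\varphi\|^2$ which supplies the extra $1$ in $\mu_0$. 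Together with the classical $\gamma\lambda\Delta$ pairings, this reconstructs the full quadratic dissipation of $\mathcal{D}_k$. Crucially, the reduced cubic nonlinearity $h(\varphi)=\varepsilon^{-2}(\varphi^3\pm 3\varphi^2)$ vanishes to order two at $\varphi=0$, so every $\partial_t^j h(\varphi)$ estimate carries at least one factor $\|\varphi\|_{L^\infty}\lesssim \mathfrak{E}_\Lambda^{1/2}$, which is the smallness factor that drives the right-hand side of the claimed inequality.

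The nonlinear interactions are controlled by the H\"older--Gagliardo--Nirenberg template of Lemma \ref{Lmm-K}, but with interpolation exponents tuned so that in each product at least one factor is charged to a $\mathcal{D}$-norm (one of $\|\partial_t^k u_t\|_{L^2_{\varrho}}$, $\|\nabla\partial_t^k\varphi_t\|$, $\|\Delta\partial_t^k u\|$, $\|\nabla\Delta\partial_t^k\varphi\|$) rather than all factors being charged to $\mathcal{E}$-norms. The prototype is the worst transport contribution $\int\varrho'(\varphi)(\nabla\varphi\cdot u)|u|^2\,dx$ (arising after $\nabla\cdot u=0$ is exploited), bounded by $\|\nabla\varphi\|\|\nabla u\|^3\lesssim \mathcal{E}_0\,\mathcal{D}_0$, which already fits the shape $\mathfrak{E}_\Lambda^{1/2}\mathfrak{D}_\Lambda$; higher-$k$ commutators of the form $\partial_t^j\varrho(\varphi)\partial_t^{k-j}u_t$ and $\partial_t^j h(\varphi)$ are handled identically via Lemma \ref{Lmm-Calculus} together with $|\varphi|\leq 2$. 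The elliptic upgrade to the last three dissipation terms of $\mathcal{D}_k$ proceeds verbatim as in Subsection \ref{Subsec:3_3}: extract from \eqref{g-equ-1} a Stokes system $-\mu\Delta\partial_t^k u+\nabla\partial_t^k p=\widetilde{U}_k$ and a Neumann Laplace problem $\Delta\partial_t^k\varphi=\widetilde{\Phi}_k$, invoke Lemma \ref{Lmm-Stokes} and Lemma \ref{Lmm-Laplace}, absorb half of $\|\partial_t^k u_t\|^2_{L^2_{\varrho}}+\|\nabla\partial_t^k\varphi_t\|^2$, and add a small multiple $\kappa_g>0$ of the resulting bound to the energy identity.

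To close, I would form $\mathfrak{E}_\Lambda = \sum_{k=0}^\Lambda \chi_k\mathcal{E}_k$ and $\mathfrak{D}_\Lambda = \sum_{k=0}^\Lambda \vartheta_k\mathcal{D}_k$ by downward induction on $k$: start with $\chi_\Lambda=\vartheta_\Lambda=1$, then enlarge $\chi_{k-1},\vartheta_{k-1}$ so that the $(1+\mathcal{E}_j^2)\mathcal{D}_j$ cross-terms with $j<k$ (the analogues of those on the right-hand side of \eqref{EkDk-bnd}) generated at level $k$ are dominated by $\vartheta_j\mathcal{D}_j$ inside $\mathfrak{D}_\Lambda$. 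The main obstacle is precisely this last step of bookkeeping: unlike the local theory, where the right-hand side $(1+\mathcal{E}^{12})\mathcal{E}$ of Lemma \ref{Lmm-H2-tk} was sufficient to run Gr\"onwall, here one must re-close \emph{every} nonlinear contribution so that at least one highest-order derivative is parked in $\mathcal{D}$ and no residual of the form $(\text{power of }\mathfrak{E})\cdot\mathfrak{E}$ survives; this requires the Gagliardo--Nirenberg exponents to be selected with particular care for the top-order mixed products appearing after the Leibniz expansion of $\partial_t^\Lambda(\nabla\varphi\otimes\nabla\varphi)$ and $\partial_t^\Lambda(u\cdot\nabla u)$, where the symmetric worst pairing $\partial_t^{\lfloor\Lambda/2\rfloor}\times\partial_t^{\lceil\Lambda/2\rceil}$ has no obvious $\mathcal{E}$--$\mathcal{D}$ asymmetry and must be teased apart by interpolation against the ADN bound.
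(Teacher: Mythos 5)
Your proposal follows essentially the same route as the paper's proof: $\partial_t^k$-level energy identities with the multipliers $\partial_t^k u+\partial_t^k u_t$ and (up to harmless coefficient choices) $\partial_t^k\varphi+\partial_t^k\varphi_t-\Delta\partial_t^k\varphi-\Delta\partial_t^k\varphi_t$, the damping from the linearization at $\pm1$ producing $\mu_0,\mu_1,\mu_2$, every nonlinear term bounded in the form (power of $\mathcal{E}$)$\times\mathcal{D}$, the ADN/Stokes estimate plus the $\varphi$-equation to recover $\|\Delta\partial_t^k u\|^2+\|\partial_t^k p\|_1^2+\|\nabla\Delta\partial_t^k\varphi\|^2$ with a small $\kappa_g$, and finally the weighted sum in $k$ to absorb the lower-order dissipation cross terms. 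The "symmetric pairing" you worry about at the end is handled exactly as you suggest (one factor lands in $\mathcal{D}_j$ since $\mathcal{D}_j$ controls $\|\partial_t^j\varphi\|_3$ and $\|\partial_t^j u\|_2$, the test-function factor is always a dissipation norm), so your plan is correct and matches the paper's argument.
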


\begin{proof}
	We will prove the lemma by two steps: 1. $H^2$-estimates for \eqref{g-equ-1}-\eqref{g-equ-2}; 2. $H^2$-estimates for higher order time derivatives.
	
	{\bf Step 1. $H^2$-estimates.} As similar in \eqref{H2-1}, we first Ttake $L^2$-inner product of the first equation of \eqref{g-equ-1} by $ u + u_t$. It thereby follows from integrating by parts over $x \in \Omega$ that
\begin{equation*}
	\begin{split}
		\tfrac{1}{2} \tfrac{d}{dt} ( \| u \|_{ L_{\varrho(\varphi)}^2 }^2 + & \mu \| \nabla u \|^2 )  + \mu \|\nabla u\|^2 + \| u_t \|_{ L_{\varrho(\varphi)}^2 }^2 \\
		= & \underbrace{ \langle \varrho(\varphi) u \cdot \nabla u + \lambda \nabla \cdot ( \nabla\varphi\otimes\nabla\varphi ), - u - u_t  \rangle }_{S_1} \,.
	\end{split}
\end{equation*}
As same in \eqref{H2-2} and \eqref{H2-3}, it infers from taking $L^2$-inner product of the $\phi$-equation in \eqref{g-equ-1} by dot with $\varphi + \varphi_t - \Delta \varphi - \Delta \varphi_t$ and integrating by parts that
\begin{equation*}
	\begin{split}
		& \tfrac{1}{2} \tfrac{d}{dt} \big( \mu_0 \| \varphi \|^2 + \mu_1 \| \nabla \varphi \|^2 + \gamma \lambda \| \Delta \varphi \|^2 \big) \\
		& + \tfrac{2 \gamma\lambda}{\eps^2} \| \varphi \|^2 + \mu_2 \| \nabla \varphi \|^2 + \| \varphi_t \|^2 + \gamma \lambda \| \Delta \varphi \|^2 + \| \nabla \varphi_t \|^2 \\
		& = \underbrace{ \langle u \cdot \nabla \varphi + \gamma \lambda h ( \varphi ) + \gamma \varrho'(\varphi)\tfrac{|u|^2}{2}, -\varphi - \varphi_t +\Delta \varphi \rangle }_{S_2} \\
		& + \underbrace{ \langle \nabla \big[ u \cdot \nabla \varphi + \gamma\lambda h ( \varphi )  + \gamma \varrho'(\varphi)\tfrac{|u|^2}{2} \big], - \nabla \varphi_t \rangle }_{S_3} \,,
	\end{split}
\end{equation*}
where $\mu_0, \mu_1, \mu_2 > 0$ are given in \eqref{mu012}. One thereby has
	\begin{align*}
		& \tfrac{1}{2} \tfrac{d}{dt} ( \| u \|_{ L_{\varrho(\varphi)}^2 }^2 + \mu \| \nabla u \|^2 + \mu_0 \| \varphi \|^2 + \mu_1 \| \nabla \varphi \|^2 + \gamma \lambda \| \Delta \varphi \|^2 ) \\
		& + \mu \|\nabla u\|^2 + \| u_t \|_{ L_{\varrho(\varphi)}^2 }^2 + \tfrac{2 \gamma\lambda}{\eps^2} \| \varphi \|^2 + \mu_2 \| \nabla \varphi \|^2 + \| \varphi_t \|^2 + \gamma \lambda \| \Delta \varphi \|^2 + \| \nabla \varphi_t \|^2\\
		= & S_1 + S_2 + S_3 \,.
	\end{align*}
We consider the term $S_1$. By Lemma \ref{Lmm-Calculus} and the H\"older inequality,
one infers that
\begin{equation}\label{S1}
	\begin{split}
	    S_1 \lesssim & \| u \|^\frac{1}{2} \| \nabla u \|^\frac{5}{2} + \| \nabla u \|^\frac{3}{2} \| \Delta u \|^\frac{1}{2} \|u_t\| + ( \| \nabla \varphi \| + \| \nabla \Delta \varphi \| ) \| \Delta \varphi \| (\|u\| + \|u_t\|) \\
	    \lesssim & ( \mathcal{E}_0^\frac{1}{2} (t) + \mathcal{E}_0 (t) ) \mathcal{D}_0 (t) \,,
	\end{split}
\end{equation}
where $\mathcal{E}_0 (t)$ and $\mathcal{D}_0 (t)$ are defined in \eqref{GED-j}. Similarly, the term $B_2$ can be bounded by
\begin{equation}\label{S2}
	\begin{split}
        S_2 \lesssim & ( \| \nabla \varphi \| + \| \nabla \Delta \varphi \| ) \|u\| (\|\varphi\| + \|\varphi_t\| + \|\Delta \varphi\|) + \|u\|^\frac{1}{2} \|\nabla u\|^\frac{3}{2} (\|\varphi\| + \|\varphi_t\| + \|\Delta \varphi\|) \\
        & + (\|\varphi\|^3 +  \|\nabla \varphi\|^3 + \| \varphi \|^\frac{1}{2} \| \nabla \varphi \|^\frac{3}{2} + \| \varphi\|^2 ) ( \|\varphi\| + \|\varphi_t\| + \|\Delta \varphi\|) \\
        \lesssim & ( \mathcal{E}_0^\frac{1}{2} (t) + \mathcal{E}_0 (t) ) \mathcal{D}_0 (t) \,.
	\end{split}
\end{equation}
Finally, the term $S_3$ can be bounded by
\begin{equation}\label{S3}
	\begin{split}
        S_3 \lesssim & \Big\{ ( \| \nabla \varphi \| + \| \nabla \Delta \varphi \| ) ( \|\nabla u\| + \| \varphi \| + \| u \|^\frac{1}{2} \| \nabla u \|^\frac{3}{2} ) \\
        & + ( \|\nabla \varphi\|^2 + \| \varphi \|^2 )(  \| \Delta \varphi \| + \| \nabla \varphi \|) \\
        & + ( \| \nabla u \|^\frac{3}{2} + \| \nabla u \|^\frac{1}{2}  \| \Delta \varphi \| ) \| \Delta u \|^\frac{1}{2} \Big\} \| \nabla \varphi_t \| \\
        \lesssim & ( \mathcal{E}_0^\frac{1}{2} (t) + \mathcal{E}_0 (t) ) \mathcal{D}_0 (t) \,.
	\end{split}
\end{equation}
Namely,
	\begin{align}\label{H2-t0-glb}
		\no & \tfrac{1}{2} \tfrac{d}{dt} ( \| u \|_{ L_{\varrho(\varphi)}^2 }^2 + \mu \| \nabla u \|^2 + \mu_0 \| \varphi \|^2 + \mu_1 \| \nabla \varphi \|^2 + \gamma \lambda \| \Delta \varphi \|^2 ) \\
		\no & + \mu \|\nabla u\|^2 + \| u_t \|_{ L_{\varrho(\varphi)}^2 }^2 + \tfrac{2 \gamma\lambda}{\eps^2} \| \varphi \|^2 + \mu_2 \| \nabla \varphi \|^2 + \| \varphi_t \|^2 + \gamma \lambda \| \Delta \varphi \|^2 + \| \nabla \varphi_t \|^2 \\
		\lesssim & ( \mathcal{E}_0^\frac{1}{2} (t) + \mathcal{E}_0 (t) ) \mathcal{D}_0 (t) \,.
	\end{align}

Observe that the quantities $\| \Delta u \|^2$ and $\| \nabla \Delta \varphi \|^2$ involved in $\mathcal{D}_0 (t)$ do not occur in the dissipative structures of \eqref{H2-t0-glb}. We thus need to control them by the ADN theory in Lemma \ref{Lmm-Stokes} and the constitive of the equations \eqref{g-equ-1}. More precisely, one has
\begin{equation*}
	\begin{aligned}
		- \mu \Delta u + \nabla p = & V (u, \varphi) \quad \ \textrm{in } \Omega \,, \\
		\nabla \cdot u = & 0 \qquad \qquad \textrm{in } \Omega \,, \\
		u = & 0 \qquad \qquad \textrm{on } \partial \Omega \,,
	\end{aligned}	
\end{equation*}
where
\begin{equation*}
	\begin{aligned}
		V (u, \varphi) = - \varrho (\varphi) ( u_t + u \cdot \nabla u ) - \lambda \nabla \cdot ( \nabla \varphi \otimes \nabla \varphi ) \,.
	\end{aligned}
\end{equation*}
Then the ADN theory in Lemma \ref{Lmm-Stokes} and the similar arguments in \eqref{Delta-u-bnd-1} show that
\begin{equation*}
	\begin{aligned}
		\| \Delta u \|^2 + \| p \|^2_1 \leq & c \| V(u, \varphi) \|^2 \leq c \| u_t \|^2_{L^2_{\varrho (\varphi)}} + C (1 + \| \varphi \|^4_2) \| u \|^2_1 \| \nabla u \|^2_1 + C \| \nabla \varphi \|^2_1 \| \Delta \phi \|^2_1 \\
		\leq & c \| u_t \|^2_{L^2_{\varrho (\varphi)}} + C (1 + \mathcal{E}_0^2 (t) ) \mathcal{E}_0 (t) \mathcal{D}_0 (t)
	\end{aligned}
\end{equation*}
for some constants $c, C > 0$.

Next we dominate the quantity $\| \nabla \Delta \varphi \|^2$. The $\varphi$-equation in \eqref{g-equ-1} indicates that
\begin{equation*}
	\begin{aligned}
		\Delta \varphi = \Psi (u, \varphi) : = \tfrac{1}{\gamma \lambda} ( \varphi_t + u \cdot \nabla \varphi ) + \tfrac{2}{\eps^2} \varphi + h (\varphi) + \tfrac{1}{\lambda} \varrho' (\varphi) \tfrac{|u|^2}{2} \,,
	\end{aligned}
\end{equation*}
which, by the similar arguments in \eqref{ND-phi-1}-\eqref{ND-phi-4}, implies that
\begin{equation*}
	\begin{aligned}
		\| \nabla \Delta \varphi \|^2 = & \| \nabla \Psi (u, \varphi) \|^2 \leq c ( \| \nabla \varphi_t \|^2 + \| \nabla \varphi \|^2 ) + C \| \nabla \phi \|^2_1 \| \nabla u \|^2_1 \\
		& + C (1 + \| \varphi \|^2_2) ( \| \varphi \|^4_2 + \| u \|^2_1 \| \nabla u \|^2_1 ) \\
		\leq & c ( \| \nabla \varphi_t \|^2 + \| \nabla \varphi \|^2 ) + C (1 + \mathcal{E}_0 (t)) \mathcal{E}_0 (t) \mathcal{D}_0 (t)
	\end{aligned}
\end{equation*}
for some positive constants $c, C > 0$. Consequently, one has
\begin{equation}\label{H2-t0-glb-1}
	\begin{aligned}
			\| \Delta u \|^2 + \| p \|^2_1 + \| \nabla \Delta \varphi \|^2 \leq c ( \| u_t \|^2_{L^2_{\varrho (\varphi)}} + \| \nabla \varphi_t \|^2 + \| \nabla \varphi \|^2 ) + C (1 + \mathcal{E}_0^2 (t)) \mathcal{E}_0 (t) \mathcal{D}_0 (t) \,.
	\end{aligned}
\end{equation}

We now take $\kappa_g > 0$ such that $\kappa_g c < \tfrac{1}{2} \min \{ 1, \mu_2 \} $. Therefore, from adding \eqref{H2-t0-glb} to the $\kappa_g$ times of \eqref{H2-t0-glb-1}, it follows that
\begin{equation*}
	\begin{aligned}
		\tfrac{1}{2} \tfrac{d}{dt} ( \| u \|_{ L_{\varrho(\varphi)}^2 }^2 + \mu \| \nabla u \|^2 + \mu_0 \| \varphi \|^2 + \mu_1 \| \nabla \varphi \|^2 + \gamma \lambda \| \Delta \varphi \|^2 ) + \mu \|\nabla u\|^2 + \tfrac{1}{2} \| u_t \|_{ L_{\varrho(\varphi)}^2 }^2 + \tfrac{2 \gamma\lambda}{\eps^2} \| \varphi \|^2 \\
		+ \tfrac{1}{2} \mu_2 \| \nabla \varphi \|^2 +  \| \varphi_t \|^2 + \gamma \lambda \| \Delta \varphi \|^2 + \tfrac{1}{2} \| \nabla \varphi_t \|^2 + \kappa_g \| \Delta u \|^2 + \kappa_g \| p \|^2_1 + \kappa_g \| \nabla \Delta \varphi \|^2 \\
		\lesssim ( 1 + \mathcal{E}_0^\frac{5}{2} (t) ) \mathcal{E}_0^\frac{1}{2} (t) \mathcal{D}_0 (t) \,,
	\end{aligned}
\end{equation*}
which concludes that
\begin{equation}\label{E-0}
	\begin{aligned}
		\tfrac{d}{d t} \mathcal{E}_0 (t) + \mathcal{D}_0 (t) \lesssim ( 1 + \mathcal{E}_0^\frac{5}{2} (t) ) \mathcal{E}_0^\frac{1}{2} (t) \mathcal{D}_0 (t) \,.
	\end{aligned}
\end{equation}

{\bf Step 2. Estimates for higher order time derivatives.} For $k \geq 1$, we apply $\partial_t^k$ to $\eqref{g-equ-1}$ and get
\begin{equation}\label{tk-varphi-u}
	\left\{
	    \begin{aligned}
	    	& \varrho (\varphi) \partial_t^k u_t +\sum_{1 \leq j \leq k} C_k^j \partial_t^j \varrho (\varphi) \partial_t^{k-j} u_t +\partial_t^k ( \varrho (\varphi) u \cdot \nabla u ) + \nabla \partial_t^k p \\
	    	& \qquad \qquad \qquad \qquad \qquad \qquad \qquad \qquad= \mu \Delta \partial_t^k u - \lambda \nabla \cdot \partial_t^k ( \nabla \varphi \otimes \nabla \varphi ) \,, \\
	    	& \qquad \qquad \qquad \qquad \qquad \qquad \nabla \cdot \partial_t^k u = 0 \,, \\
	    	& \partial_t^k \varphi_t + \partial_t^k ( u \cdot \nabla \varphi ) + \tfrac{2 \gamma \lambda}{\eps^2} \partial_t^k \varphi = \gamma \lambda \Delta \partial_t^k \varphi - \gamma \lambda \partial_t^k h (\varphi) - \gamma \partial_t^k ( \varrho' (\varphi) \tfrac{|u|^2}{2} ) \,.
	    \end{aligned}
	\right.
\end{equation}
Moreover, $(\partial_t^k u, \partial_t^k \varphi)$ subjects to the boundary conditions
\begin{equation}\label{BC-tk-varphi-u}
	\begin{aligned}
		\partial_t^k u |_{\partial \Omega} = 0 \,, \quad \tfrac{\partial}{\partial \n} \partial_t^k \varphi |_{\partial \Omega} = 0 \,.
	\end{aligned}
\end{equation}
Then, by employing the similar derivation of \eqref{H2tk-1}, i.e., combining with the boundary conditions \eqref{BC-tk-varphi-u} and taking $L^2$-inner products via dot with $\partial_t^k u + \partial_t^k u_t$ and $\partial_t^k \varphi + \partial_t^k \varphi_t - \Delta \partial_t^k \varphi - \Delta \partial_t^k \varphi_t$ in the first and third equation of \eqref{tk-varphi-u}, respectively, it follows that
\begin{equation*}
	\begin{split}
	    & \tfrac{1}{2} \tfrac{d}{dt} \Big( \| \partial_t^{k} u \|_{L^2_{\varrho(\varphi)}}^2 + \mu \| \nabla \partial_t^{k} u \|^2 + \mu_0 \| \partial_t^{k} \varphi \|^2 + \mu_1 \| \nabla \partial_t^{k}\varphi \|^2 + \gamma\lambda\| \Delta \partial_t^{k} \varphi \|^2 \Big) + \mu\| \nabla \partial_t^{k} u \|^2 \\
	    & + \| \partial_t^k u_t \|_{L^2_{\varrho(\varphi)}}^2 + \tfrac{2 \gamma\lambda}{\varepsilon^2}\| \partial_t^{k} \varphi \|^2 + \mu_2  \| \nabla \partial_t^{k} \varphi \|^2 + \| \partial_t^k \varphi_t \|^2 + \gamma \lambda \| \Delta \partial_t^{k} \varphi \|^2 + \| \nabla \partial_t^k \varphi_t \|^2 \\
	    & =  \underbrace{ \langle \sum_{1 \leq j \leq k} C_k^j \partial_t^j \varrho(\varphi) \partial_t^{k-j} u_t ,  - \partial_t^{k} u - \partial_t^{k} u_t \rangle }_{J_1} + \underbrace{ \langle X_k , - \partial_t^{k} u - \partial_t^k u_t \rangle }_{J_2} \\
        & + \underbrace{ \langle Y_k , - \partial_t^{k} \varphi - \partial_t^k \varphi_t + \Delta \partial_t^{k} \varphi \rangle}_{J_3} + \underbrace{ \langle \nabla Y_k , - \nabla \partial_t^k \varphi_t \rangle}_{J_4} \,,	
	\end{split}
\end{equation*}
where $\mu_0, \mu_1, \mu_2 > 0$ is given in \eqref{mu012}, and
\begin{equation*}
	\begin{aligned}
		X_k : = & \partial_t^{k} ( \varrho(\varphi) u \cdot \nabla u ) + \lambda \partial_t^{k} ( \nabla \varphi \Delta \varphi ) \,, \\
		Y_k : = & \partial_t^{k} ( u \cdot \nabla \varphi ) + \gamma\lambda \partial_t^{k} h (\varphi) + \gamma \partial_t^{k} ( \varrho'(\varphi) \tfrac{|u|^2}{2} ) \,.
	\end{aligned}
\end{equation*}

By the standard Sobolev theory and the similar arguments in \eqref{A1}-\eqref{A3}, it easily follows that
\begin{equation*}
	\begin{aligned}
		J_1 \leq & C \sum_{1 \leq j \leq k} \| \partial_t^j \varrho (\varphi) \|_2 \| \partial_t^{k-j} u_t \|_{L^2 (\varrho (\varphi))} \big( \| \nabla \partial_t^k u \| + \| \partial_t^k u_t \|_{L^2 (\varrho (\varphi))} \big) \\
		\leq & \tfrac{\mu}{8} \| \nabla \partial_t^k u \|^2 + \tfrac{1}{8} \| \partial_t^k u_t \|^2_{L^2 (\varrho (\varphi))} + \Big( 1 + \sum_{0 \leq i \leq k} \| \partial_t^i \varphi \|^2_2 \Big) \sum_{0 \leq j \leq k-1} \| \partial_t^j u_t \|^2_{L^2_{\varrho (\varphi)}} \\
		\leq & \tfrac{\mu}{8} \| \nabla \partial_t^k u \|^2 + \tfrac{1}{8} \| \partial_t^k u_t \|^2_{L^2 (\varrho (\varphi))} + C \sum_{0 \leq j \leq k-1} \mathcal{D}_j (t) + C \sum_{0 \leq j \leq k} \mathcal{E}_j (t) \mathcal{D}_j (t) \,,
	\end{aligned}
\end{equation*}
and
\begin{equation*}
	\begin{aligned}
		J_2 \leq & C \sum_{0 \leq j \leq k} \Big[ (1 + \| \partial_t^j \varphi \|^2_2) \| \partial_t^j u \|_1 \| \nabla \partial_t^j u \|_1 + \| \nabla \partial_t^j \varphi \|_1 \| \Delta \partial_t^j \varphi \|_1 \Big] \big( \| \nabla \partial_t^k u \| + \| \partial_t^k u_t \|_{L^2 (\varrho (\varphi))} \big) \\
		\leq & C \sum_{0 \leq j \leq k} ( 1 + \mathcal{E}_j (t) ) \mathcal{E}_j^\frac{1}{2} (t) \mathcal{D}_j (t) \,,
	\end{aligned}
\end{equation*}
and
\begin{equation*}
	\begin{aligned}
		J_3 \leq & C \sum_{0 \leq j \leq k} ( 1 + \| \partial_t^j \varphi \|_2 ) ( \| \nabla \partial_t^j u \|^2 + \| \partial_t^j \varphi \|^2_2 ) ( \| \partial_t^k \varphi \| + \| \partial_t^k \varphi_t \| + \| \Delta \partial_t^k \varphi \| ) \\
		\leq & C \sum_{0 \leq j \leq k} (1 + \mathcal{E}_j^\frac{1}{2} (t) ) \mathcal{E}_j^\frac{1}{2} (t) \mathcal{D}_j (t) \,,
	\end{aligned}
\end{equation*}
and
\begin{equation*}
	\begin{aligned}
		J_4 \leq & C \sum_{0 \leq j \leq k} \big( \| \nabla \partial_t^j u \| \| \Delta \partial_t^j \varphi \|_1 + \| \nabla \partial_t^j \varphi \|_1 \| \nabla \partial_t^j u \|_1 \big) \| \nabla \partial_t^k \varphi_t \| \\
		& + C \sum_{0 \leq j \leq k} (1 + \| \partial_t^j \varphi \|_2 ) \big( \| \partial_t^j \varphi \|^2_2 + \| \nabla \partial_t^j u \| \| \nabla \partial_t^j u \|_1 \big) \| \nabla \partial_t^k \varphi \| \\
		\leq & C \sum_{0 \leq j \leq k} ( 1 + \mathcal{E}_j^\frac{1}{2} (t) ) \mathcal{E}_j^\frac{1}{2} (t) \mathcal{D}_j (t) \,.
	\end{aligned}
\end{equation*}
Consequently, one has
	\begin{align}\label{H2-tk-glb}
		\no & \tfrac{1}{2} \tfrac{d}{dt} \Big( \| \partial_t^{k} u \|_{L^2_{\varrho(\varphi)}}^2 + \mu \| \nabla \partial_t^{k} u \|^2 + \mu_0 \| \partial_t^{k} \varphi \|^2 + \mu_1 \| \nabla \partial_t^{k}\varphi \|^2 + \gamma\lambda\| \Delta \partial_t^{k} \varphi \|^2 \Big) + \tfrac{7 \mu}{8} \| \nabla \partial_t^{k} u \|^2 \\
		\no & + \tfrac{7}{8} \| \partial_t^k u_t \|_{L^2_{\varrho(\varphi)}}^2 + \tfrac{2 \gamma\lambda}{\varepsilon^2}\| \partial_t^{k} \varphi \|^2 + \mu_2  \| \nabla \partial_t^{k} \varphi \|^2 + \| \partial_t^k \varphi_t \|^2 + \gamma \lambda \| \Delta \partial_t^{k} \varphi \|^2 + \| \nabla \partial_t^k \varphi_t \|^2 \\
		& \leq C \sum_{0 \leq j \leq k-1} \mathcal{D}_j (t) + C \sum_{0 \leq j \leq k} ( 1 + \mathcal{E}_j (t) ) \mathcal{E}_j^\frac{1}{2} (t) \mathcal{D}_j (t) \,.
	\end{align}

Note that the quantities $\| \Delta \partial_t^k u \|^2$ and $\| \nabla \Delta \partial_t^k \varphi \|^2$ involved in $\mathcal{D}_k (t)$ do not occur in the dissipative structures of \eqref{H2-tk-glb}. It thereby is requrired to dominate then by the ADN theory in Lemma \ref{Lmm-Stokes} and the constitive of the equations \eqref{tk-varphi-u}. To be more precise, one has
\begin{equation*}
	\begin{aligned}
		- \mu \Delta \partial_t^k u + \nabla \partial_t^k p = & V_k (u, \varphi) \quad \ \textrm{in } \Omega \,, \\
		\nabla \cdot \partial_t^k u = & 0 \qquad \qquad \ \textrm{in } \Omega \,, \\
		\partial_t^k u = & 0 \qquad \qquad \ \textrm{on } \partial \Omega \,,
	\end{aligned}	
\end{equation*}
where
\begin{equation*}
	\begin{aligned}
		V_k (u, \varphi) = - \partial_t^k \big[ \varrho (\varphi) ( u_t + u \cdot \nabla u ) \big] - \lambda \nabla \cdot \partial_t^k ( \nabla \varphi \otimes \nabla \varphi ) \,.
	\end{aligned}
\end{equation*}
Then the ADN theory in Lemma \ref{Lmm-Stokes} and the similar arguments in \eqref{H2tk-3} indicate that
\begin{equation*}
	\begin{aligned}
		\| \Delta \partial_t^k u \|^2 + \| \partial_t^k p \|_1^2 \leq & c_0 \| V_k (u, \varphi) \|^2 \leq c \| \partial_t^k u_t \|^2_{L^2 (\varrho (\varphi))} + C \sum_{0 \leq j \leq k} \| \nabla \partial_t^j \varphi \|^2_1 \| \Delta \partial_t^j \varphi \|_1^2 \\
		& + C \sum_{0 \leq j \leq k} (1 + \| \partial_t^j \varphi \|_2^4 ) \Big( \| \partial_t^j \varphi \|_2^2 \| \partial_t^j u_t \|^2_{L^2 (\varrho (\varphi))} + \| \nabla \partial_t^j u \|^2 \| \nabla \partial_t^j u \|^2_1 \Big) \\
		\leq & c \| \partial_t^k u_t \|^2_{L^2 (\varrho (\varphi))} + C \sum_{0 \leq j \leq k} ( 1 + \mathcal{E}_j^2 (t) ) \mathcal{E}_j (t) \mathcal{D}_j (t)
	\end{aligned}
\end{equation*}
for some positive constants $c, C > 0$.

Next we consider the quantity $\| \nabla \Delta \partial_t^k \varphi \|^2$. The $\varphi$-equation in \eqref{tk-varphi-u} indicates that
\begin{equation*}
	\begin{aligned}
		\Delta \partial_t^k \varphi = \Psi_k (u, \varphi) : = \tfrac{1}{\gamma \lambda} \partial_t^k ( \varphi_t + u \cdot \nabla \varphi ) + \tfrac{2}{\eps^2} \partial_t^k \varphi + \partial_t^k h (\varphi) + \tfrac{1}{\lambda} \partial_t^k \big[ \varrho' (\varphi) \tfrac{|u|^2}{2} \big] \,,
	\end{aligned}
\end{equation*}
which, by the similar arguments in \eqref{H2tk-5}, implies that
	\begin{align*}
		\| \nabla \Delta \partial_t^k \varphi \|^2 \leq & c (\| \nabla \partial_t^k \varphi \|^2 + \| \nabla \partial_t^k \varphi_t \|^2 ) + C \sum_{0 \leq j \leq k} (1 + \| \partial_t^j \varphi \|_2^2 ) \| \partial_t^j \varphi \|_2^4 \\
		& + C \sum_{0 \leq j \leq k} \Big( (1 + \| \partial_t^j \varphi \|_2^2 ) \| \nabla \partial_t^j u \|^2 \| \nabla \partial_t^j u \|^2_1 \\
		& \qquad \qquad + \| \nabla \partial_t^j \varphi \|^2_1 \| \nabla \partial_t^j u \|_1^2 + \| \nabla \partial_t^j u \|^2 \| \Delta \partial_t^j \varphi \|^2_1 \Big) \\
		\leq & c (\| \nabla \partial_t^k \varphi \|^2 + \| \nabla \partial_t^k \varphi_t \|^2 ) + C \sum_{0 \leq j \leq k} ( 1 + \mathcal{E}_j (t) ) \mathcal{E}_j (t) \mathcal{D}_j (t)
	\end{align*}
for some positive constants $c, C > 0$. As a result, there holds
\begin{equation}\label{H2-tk-glb-1}
	\begin{aligned}
		\| \Delta & \partial_t^k u \|^2 + \| \partial_t^k p \|_1^2 + \| \nabla \Delta \partial_t^k \varphi \|^2 \\
		& \leq c (\| \partial_t^k u_t \|^2_{L^2 (\varrho (\varphi))} + \| \nabla \partial_t^k \varphi \|^2 + \| \nabla \partial_t^k \varphi_t \|^2 ) + C \sum_{0 \leq j \leq k} ( 1 + \mathcal{E}_j^2 (t) ) \mathcal{E}_j (t) \mathcal{D}_j (t) \,.
	\end{aligned}
\end{equation}

We now take $\kappa_g > 0$ such that $\kappa_g c < \tfrac{1}{2} \min \{ \mu_2, \tfrac{3}{4} \}$. Therefore, from adding \eqref{H2-tk-glb} to the $k_g$ times of \eqref{H2-tk-glb-1}, it infers that
\begin{align}
	\no & \tfrac{1}{2} \tfrac{d}{dt} \Big( \| \partial_t^{k} u \|_{L^2_{\varrho(\varphi)}}^2 + \mu \| \nabla \partial_t^{k} u \|^2 + \mu_0 \| \partial_t^{k} \varphi \|^2 + \mu_1 \| \nabla \partial_t^{k}\varphi \|^2 + \gamma\lambda\| \Delta \partial_t^{k} \varphi \|^2 \Big) + \tfrac{7 \mu}{8} \| \nabla \partial_t^{k} u \|^2 \\
	\no & + \tfrac{1}{2} \| \partial_t^k u_t \|_{L^2_{\varrho(\varphi)}}^2 + \tfrac{2 \gamma\lambda}{\varepsilon^2}\| \partial_t^{k} \varphi \|^2 + \tfrac{1}{2} \mu_2  \| \nabla \partial_t^{k} \varphi \|^2 + \| \partial_t^k \varphi_t \|^2 + \gamma \lambda \| \Delta \partial_t^{k} \varphi \|^2 + \tfrac{1}{2} \| \nabla \partial_t^k \varphi_t \|^2 \\
	\no & \leq C \sum_{0 \leq j \leq k-1} \mathcal{D}_j (t) + C \sum_{0 \leq j \leq k} ( 1 + \mathcal{E}_j^\frac{5}{2} (t) ) \mathcal{E}_j^\frac{1}{2} (t) \mathcal{D}_j (t) \,,
\end{align}
which means that
\begin{equation}\label{E-k}
	\begin{aligned}
		\tfrac{d}{d t} \mathcal{E}_k (t) + \mathcal{D}_k (t) \leq C \sum_{0 \leq j \leq k-1} \mathcal{D}_j (t) + C \sum_{0 \leq j \leq k} ( 1 + \mathcal{E}_j^\frac{5}{2} (t) ) \mathcal{E}_j^\frac{1}{2} (t) \mathcal{D}_j (t)
	\end{aligned}
\end{equation}
for any $k \geq 1$. Consequently, by \eqref{E-0} and \eqref{E-k}, one inductively concludes the result in Lemma \ref{Lmm-HtDEst}.
\end{proof}

\noindent{\bf  Proof of Theorem \ref{th2}: global well-posedness with small initial data.}

By Lemma \ref{Lmm-HtDEst}, we know that
\begin{equation}\label{Global-Energy}
	\begin{aligned}
		\tfrac{d}{d t} \mathfrak{E}_\Lambda (t) + \mathfrak{D}_\Lambda (t) \leq C ( 1 + \mathfrak{E}_\Lambda^\frac{5}{2} (t) ) \mathfrak{E}_\Lambda^\frac{1}{2} (t) \mathfrak{D}_\Lambda (t) \,.
	\end{aligned}
\end{equation}
Observe that by the constitive of \eqref{g-equ-1},
\begin{equation*}
	\begin{split}
		\mathfrak{E}_\Lambda (0) & = \sum_{0 \leq k \leq \Lambda} \chi_k \mathcal{E}_k (0) \leq c_0 \big( \| u^{in} \|_{2 \Lambda + 1}^2 + \| \varphi^{in} \|_{2 \Lambda + 2}^2 \big) : = c_0 \mathcal{E}_\Lambda^{in} \le c_0 \upsilon_0
	\end{split}
\end{equation*}
for small $\upsilon_0 \in (0,1)$ to be determined. Then there is a sufficiently small $\upsilon_0 \in (0,1)$ such that if ${\mathcal{E}}^{in}_\Lambda \leq \upsilon_0$, then
\begin{equation}\label{IC-small}
	\begin{aligned}
		C ( 1 + \mathfrak{E}_\Lambda^\frac{5}{2} (0) ) \mathfrak{E}_\Lambda^\frac{1}{2} (0) \leq \tfrac{1}{4} \,.
	\end{aligned}
\end{equation}
Now we define
\begin{equation}\label{Def-global-T}
	T_* = \sup \big\{ \tau \geq 0 ; \sup_{t\in[0,\tau]} C ( 1 + \mathfrak{E}_\Lambda^\frac{5}{2} (t) ) \mathfrak{E}_\Lambda^\frac{1}{2} (t) \leq \tfrac{1}{2} \big\} \geq 0 \,.
\end{equation}
By the continuity of $\mathfrak{E}_\Lambda (t)$ and \eqref{IC-small}, one has $T_* > 0$.

Further claim that $ T_* = + \infty $. Indeed, if $ T_* < + \infty$, then the energy inequality \eqref{Global-Energy} implies that for all $ t \in [0,T_*] $
\begin{equation}\label{Global-Energy-1}
	\tfrac{d}{d t} \mathfrak{E}_\Lambda (t) + \tfrac{1}{2} \mathfrak{D}_\Lambda (t) \leq 0 \,,
\end{equation}
which means
\begin{equation*}
	\sup_{ t \in [0,T_*] } \mathfrak{E}_\Lambda (t) + \int_{0}^{T_*} \mathfrak{D}_\Lambda (t) \d t \leq \mathfrak{E}_\Lambda (0) \leq c_0 \upsilon_0 \,.
\end{equation*}
It therefore follows that
\begin{equation*}
	\sup_{ t \in [0,T_*] } C ( 1 + \mathfrak{E}_\Lambda^\frac{5}{2} (t) ) \mathfrak{E}_\Lambda^\frac{1}{2} (t) \leq C ( 1 + \mathfrak{E}_\Lambda^\frac{5}{2} (0) ) \mathfrak{E}_\Lambda^\frac{1}{2} (0) \leq \tfrac{1}{4} \,.
\end{equation*}
By the continuity of $\mathfrak{E}_\Lambda (t)$, there is a $ t^* > 0$ such that for all $t\in[0,T_* + t^*]$
\begin{equation*}
	C ( 1 + \mathfrak{E}_\Lambda^\frac{5}{2} (t) ) \mathfrak{E}_\Lambda^\frac{1}{2} (t) \leq \tfrac{3}{8} < \tfrac{1}{2} \,,
\end{equation*}
which contradict to the definition of $T_*$ in \eqref{Def-global-T}. Thus $T_* = + \infty$. Consequently, we have
\begin{equation*}
	\begin{aligned}
		\sup_{ t \geq 0 } \mathfrak{E}_\Lambda (t) + \int_{0}^{\infty} \mathfrak{D}_\Lambda (t) \d t \leq \mathfrak{E}_\Lambda (0) \leq c_0 \mathcal{E}_\Lambda^{in} \,.
	\end{aligned}
\end{equation*}

Moreover, since $\partial_t^k u |_{\partial \Omega} = 0$ and $\varrho (\varphi) \thicksim 1$, the Poincar\'e inequality indicates that $\| \partial_t^k u \|^2_{L^2_{\varrho (\varphi)}} \leq C \| \nabla \partial_t^k u \|^2$ for all $k \geq 0$. It thereby follows that
\begin{equation*}
	\begin{aligned}
		\tfrac{1}{2} \mathfrak{D}_\Lambda (t) \geq c_\# \mathfrak{E}_\Lambda (t) \,.
	\end{aligned}
\end{equation*}
Together with \eqref{Global-Energy-1}, it infers that $\tfrac{d}{d t} \mathfrak{E}_\Lambda (t) + c_\# \mathfrak{E}_\Lambda (t) \leq 0$, which means that
\begin{equation}\label{Global-Decay}
	\mathfrak{E}_\Lambda (t) \leq \mathfrak{E}_\Lambda (0) e^{- c_\# t} \leq c_0 \mathcal{E}_\Lambda^{in} e^{- c_\# t} \ \ (\forall \, t \geq 0) \,.
\end{equation}

Furtheremore, by the same arguments in Lemma \ref{Lmm-HSDE}, one has
\begin{equation*}
	\sum_{\ell + s \leq \Lambda} \Big( \| \partial_t^\ell u \|^2_{s+1} + \| \partial_t^\ell p \|^2_s + \| \partial_t^\ell \varphi \|^2_{s+2} \Big) (t) \leq C \big( 1 + \mathfrak{E}_\Lambda^{\aleph_\Lambda} (t) \big) \mathfrak{E}_\Lambda (t) \leq c_1 \mathcal{E}_\Lambda^{in} e^{- c_\# t}
\end{equation*}
for all $t \geq 0$, where $\aleph_\Lambda = \tfrac{11}{3} \cdot 4^\Lambda - \tfrac{8}{3} > 0$. Note that
\begin{equation*}
	\begin{aligned}
		\mathfrak{E}_\Lambda (t) \thicksim \sum_{0 \leq k \leq \Lambda} \mathds{E}_k (t) \,, \quad \mathfrak{D}_\Lambda (t) \thicksim \sum_{0 \leq k \leq \Lambda} \mathds{D}_k (t) \,.
	\end{aligned}
\end{equation*}
Then the proof of Theorem \ref{th2} is finished.
	
\appendix

\section{Proof of Lemma \ref{Lmm-U-Phi}}\label{Sec:Appendix}

The goal of this section is to justify the computations of bounds on $\| U_\ell (u, \phi) \|^2_s$ and $\Phi_\ell (u, \phi) \|^2_s$, namely, to prove Lemma \ref{Lmm-U-Phi}. Later, we will frequently use the following calculus inequalities:
\begin{equation}\label{Calculus-Inq}
	\begin{aligned}
		\| f_1 f_2 \cdots f_n \|_s \lesssim \| f_1 \|_s \| f_2 \|_s \cdots \| f_n \|_s \  (\forall s \geq 2) \,, \ \| f \|_{L^\infty} \lesssim \| f \|_2 \,, \ \| f \|_{L^4} \lesssim \| f \|_1 \,.
	\end{aligned}
\end{equation}

We now start to prove the results in Lemma \ref{Lmm-U-Phi}.

\begin{proof}[Proof of Lemma \ref{Lmm-U-Phi}]
	We will divide the proof into three steps.
	
	{\bf Step 1. $s=0$: To control $\| U_\ell (u, \phi) \|^2$ and $\| \Phi_\ell (u, \phi) \|^2$ for $\ell \geq 0$.}
	
	We first estimate the quantity $\| U_\ell (u, \phi) \|^2$. By the definition of $U_\ell (u, \phi)$ in \eqref{U-Phi-l}, it suffices to estimate the norms $\| \partial_t^\ell [\rho (\phi) u_t] \|^2  $, $\| \partial_t^\ell [\rho (\phi) u \cdot \nabla u ] \|^2$ and $\| \nabla \cdot \partial_t^\ell ( \nabla \phi \otimes \nabla \phi ) \|^2$.
	
	By the second inequality in \eqref{Calculus-Inq} and the H\"older inequality, it follows that
	\begin{equation*}
		\begin{aligned}
		    \| \partial_t^\ell [\rho (\phi) u_t] \|^2 \lesssim & \sum_{a+b=\ell} \| \partial_t^a \rho (\phi) \|^2_{L^\infty} \| \partial_t^{b+1} u \|^2 \lesssim \sum_{a+b=\ell} \| \partial_t^a \rho (\phi) \|^2_2 \| \partial_t^{b+1} u \|^2 \\
		    \lesssim & \sum_{0 \leq j \leq \ell} ( 1 + \| \partial_t^j \phi \|^4_2 ) \| \partial_t^{j+1} u \|^2 \lesssim \sum_{0 \leq j \leq \ell} (1 + E_j^2 (t)) E_{j+1} (t) \\
		    \lesssim & ( 1 + \mathbf{E}^2_{\ell+1} (t) ) \mathbf{E}_{\ell+1} (t) \,,
		\end{aligned}
	\end{equation*}
	where $\mathbf{E}_{\ell+1} (t)$ is defined in \eqref{EUPhi-ls}.
	
	Next, by Lemma \ref{Lmm-Calculus} and the last two inequalities in \eqref{Calculus-Inq},
	\begin{equation}\label{Ul-bnd-1}
		\begin{aligned}
			\| \partial_t^\ell [\rho (\phi) u \cdot \nabla u ] \|^2 \lesssim & \sum_{a+b+c=\ell} \| \partial_t^a \rho (\phi) \|^2_{L^\infty} \| \partial_t^b u \|^2_{L^4} \| \nabla \partial_t^c u \|^2_{L^4} \\
			\lesssim & \sum_{a+b+c=\ell} \| \partial_t^a \rho (\phi) \|^2_2 \| \partial_t^b u \|^2_1 \| \nabla \partial_t^c u \|^\frac{1}{2} \| \Delta \partial_t^c u \|^\frac{3}{2} \\
			\lesssim & \sum_{0 \leq j \leq \ell} (1 + \| \partial_t^j \phi \|^4_2) \| \partial_t^j u \|^\frac{5}{2} \| \partial_t^j u \|^\frac{3}{2}_2 \\
			\lesssim & \sum_{0 \leq j \leq \ell} (1 + E_j^2 (t)) E_j^\frac{5}{4} (t) \| U_j (u, \phi) \|^\frac{3}{2} \\
			\lesssim & \eps_0 \mathbf{U}_{\ell,0} + (1 + \mathbf{E}_\ell^8 (t) ) \mathbf{E}_\ell^5 (t)
		\end{aligned}
	\end{equation}
	for small $\eps_0 > 0$ to be determined, where the last second inequality is derived from \eqref{u-U-bnd}. Here $\mathbf{U}_{\ell,0}$ is defined in \eqref{EUPhi-ls}.
	
	Moreover, from Lemma \ref{Lmm-Calculus}, \eqref{Calculus-Inq} and \eqref{phi-Phi-bnd}, it infers that
	\begin{equation*}
		\begin{aligned}
			\| \nabla \cdot \partial_t^\ell ( \nabla \phi \otimes \nabla \phi ) \|^2 \lesssim & \sum_{a+b=\ell} \| \nabla \partial_t^a \phi \|^2_{L^4} \| \nabla^2 \partial_t^b \phi \|^2_{L^4} \lesssim \sum_{a+b=\ell} \| \nabla \partial_t^a \phi \|^2_1 \| \Delta \partial_t^b \phi \|^\frac{1}{2} \| \nabla \Delta \partial_t^b \phi \|^\frac{3}{2} \\
			\lesssim & \sum_{0 \leq j \leq \ell} E_j^\frac{5}{4} (t) \big( \| \Phi_j (u, \phi) \|_1^\frac{3}{2} + E_j^\frac{3}{4} (t) \big) \lesssim \eps_0 \bm{\Phi}_{\ell, 1} + (1 + \mathbf{E}_\ell^3 (t)) \mathbf{E}_\ell^2 (t) \,.
		\end{aligned}
	\end{equation*}
	
	Collecting the above three bounds, we conclude the first inequality in \eqref{UPhi-l-0} about the quantity $\| U_\ell (u, \phi) \|^2$.
	
	We then estimate the quantity $\| \Phi_\ell (u, \phi) \|^2$. Note that, by the definition of $\Phi_\ell (u, \phi)$ in \eqref{U-Phi-l},
	\begin{equation*}
		\begin{aligned}
			\| \Phi_\ell (u, \phi) \|^2 \lesssim \| \partial_t^{\ell+1} \phi \|^2 + \| \partial_t^\ell (u \cdot \nabla \phi) \|^2 + \| \partial_t^\ell f' (\phi) \|^2 + \| \partial_t^\ell [\rho' (\phi) |u|^2] \|^2 \,.
		\end{aligned}
	\end{equation*}
	By the similar arguments in \eqref{Ul-bnd-1}, one can easily derive that
	\begin{equation*}
		\begin{aligned}
			\| \partial_t^\ell (u \cdot \nabla \phi) \|^2 + \| \partial_t^\ell [\rho' (\phi) |u|^2] \|^2 \lesssim (1 + \mathbf{E}_\ell (t)) \mathbf{E}_\ell^2 (t) \,.
		\end{aligned}
	\end{equation*}
	Recall that $f' (\phi) = \tfrac{1}{\eps^2} (\phi^2 - 1) \phi$. Then the inequalities in \eqref{Calculus-Inq} indicate that
	\begin{equation*}
		\begin{aligned}
			\| \partial_t^\ell f' (\phi) \|^2 \lesssim \| \partial_t^\ell \phi \|^2 + \sum_{0 \leq j \leq \ell} \| \partial_t^j \phi \|^6 \lesssim (1 + \mathbf{E}_\ell^2 (t)) \mathbf{E}_\ell (t) \,.
		\end{aligned}
	\end{equation*}
	Obviously, $\| \partial_t^{\ell+1} \phi \|^2 \lesssim \mathbf{E}_{\ell + 1} (t)$. We summarily obtain
	\begin{equation*}
		\begin{aligned}
			\| \Phi_\ell (u, \phi) \|^2 \lesssim (1 + \mathbf{E}_{\ell+1}^2 (t)) \mathbf{E}_{\ell+1} (t) \,,
		\end{aligned}
	\end{equation*}
	hence, the second inequality in \eqref{UPhi-l-0} holds.
	
	{\bf Step 2. $s=1$: To control $\| U_\ell (u, \phi) \|^2_1$ and $\| \Phi_\ell (u, \phi) \|^2_1$ for $\ell \geq 0$.}
	
	We first estimate the quantity $\| U_\ell (u, \phi) \|^2_1$. Note that, by \eqref{U-Phi-l},
	\begin{equation*}
		\begin{aligned}
			\| \nabla U_\ell (u, \phi) \|^2 \lesssim \| \nabla \partial_t^\ell [\rho (\phi) u_t] \|^2 + \| \nabla \partial_t^\ell [\rho (\phi) u \cdot \nabla u] \|^2 + \| \nabla \cdot \nabla \partial_t^\ell ( \nabla \phi \otimes \nabla \phi ) \|^2 \,.
		\end{aligned}
	\end{equation*}
	By the last two inequality in \eqref{Calculus-Inq},
	\begin{equation*}
		\begin{aligned}
			\| \nabla \partial_t^\ell [\rho (\phi) u_t] \|^2 \lesssim & \| \partial_t^\ell [ \nabla \rho (\phi) u_t ] \|^2 + \| \partial_t^\ell [\rho (\phi) \nabla u_t] \|^2 \\
			\lesssim & \sum_{a+b=\ell} \| \partial_t^a \nabla \rho (\phi) \|^2_{L^4} \| \partial_t^{b+1} u \|^2_{L^4} + \sum_{a+b=\ell} \| \partial_t^a \rho (\phi) \|^2_{L^\infty} \| \nabla \partial_t^{b+1} u \|^2 \\
			\lesssim & \sum_{0 \leq j \leq \ell} (1 + \| \partial_t^j \phi \|^4_2) \| \partial_t^{j+1} u \|^2_1 \lesssim (1 + \mathbf{E}_\ell^2 (t)) \mathbf{E}_{\ell+1} (t) \,.
		\end{aligned}
	\end{equation*}
	Moreover, by the last two inequality in \eqref{Calculus-Inq} and the estimate \eqref{u-U-bnd},
	\begin{equation*}
		\begin{aligned}
			\| \nabla \partial_t^\ell [\rho (\phi) u \cdot \nabla u] \|^2 \lesssim & \sum_{a+b+c=\ell} \Big( \| \partial_t^a \nabla \rho (\phi) \partial_t^b u \cdot \nabla \partial_t^c u \|^2 + \| \partial_t^a \rho (\phi) \nabla \partial_t^b u \cdot \nabla \partial_t^c u \|^2 \Big) \\
			& \qquad \qquad + \sum_{a+b+c=\ell} \| \partial_t^a \rho (\phi) \partial_t^b u \cdot \nabla^2 \partial_t^c u \|^2 \\
			\lesssim & \sum_{0 \leq j \leq \ell} ( 1 + \| \partial_t^j \phi \|^4_2 ) \| \partial_t^j u \|^4_2 \lesssim \sum_{0 \leq j \leq \ell} (1 + E_j^2 (t)) \| U_j (u, \phi) \|^4 \\
			\lesssim & (1 +\mathbf{E}_\ell^2 (t)) \mathbf{U}_{\ell, 0}^2 \,.
		\end{aligned}
	\end{equation*}
	Similarly, one has
	\begin{equation*}
		\begin{aligned}
			\| \nabla \cdot \nabla \partial_t^\ell ( \nabla \phi \otimes \nabla \phi ) \|^2 \lesssim f& \sum_{a+b=\ell} \| \nabla^3 \partial_t^a \phi \|^2 \| \nabla \partial_t^b \phi \|^2_{L^\infty} + \sum_{a+b=\ell} \| \nabla^2 \partial_t^a \phi \|^2_{L^4} \| \nabla^2 \partial_t^b \phi \|^2_{L^4} \\
			\lesssim & \sum_{0 \leq j \leq \ell} \| \partial_t^j \phi \|^4_3 \lesssim \sum_{0 \leq j \leq \ell} \Big( \| \Phi_j (u, \phi) \|^4_1 + \| \partial_t^j \phi \|^4_1 \Big) \\
			\lesssim & \bm{\Phi}^2_{\ell, 1} + \mathbf{E}_\ell^2 (t) \,,
		\end{aligned}
	\end{equation*}
	where the last second inequality is implies by the bound \eqref{phi-Phi-bnd}. Consequently, we obtain
	\begin{equation*}
		\begin{aligned}
			\| \nabla U_\ell (u, \phi) \|^2 \lesssim (1 +\mathbf{E}_\ell^2 (t)) \mathbf{U}_{\ell, 0}^2 + (1 + \mathbf{E}_\ell^2 (t)) \mathbf{E}_{\ell+1} (t) \,.
		\end{aligned}
	\end{equation*}
	Together with the first bound in \eqref{UPhi-l-0}, it follows the validity of the first inequality about $\| U_\ell (u, \phi) \|^2$ in \eqref{UPhi-l-1}.
	
	We then estimate the quantity $\| \Phi_\ell (u, \phi) \|^2_\ell$. By \eqref{H2tk-5} and \eqref{u-U-bnd}-\eqref{phi-Phi-bnd} we know that
	\begin{equation*}
		\begin{aligned}
			\| \nabla \Phi_k (u, \phi) \|^2 \lesssim & \| \nabla \partial_t^{\ell+1} \phi \|^2 + \sum_{0 \leq j \leq \ell} (1 + E_j^2 (t) ) E_j (t) \\
			& + \sum_{0 \leq j \leq \ell} (1 + E_j (t)) E_j^\frac{5}{4} (t) \big( \| \Delta \partial_t^j u \|^\frac{3}{2} + \| \nabla \Delta \partial_t^j \phi \|^\frac{3}{2} \big) \\
			\lesssim & \eps_0 ( \mathbf{U}_{\ell, 0} + \bm{\Phi}_{\ell, 1} ) + (\mathbf{E}_{\ell+1}^2 (t)) \mathbf{E}_{\ell+1} (t) \,.
		\end{aligned}
	\end{equation*}
	Together with the second inequality in \eqref{UPhi-l-0}, we conclude the bound of $\| \Phi_\ell (u, \phi) \|^2_1$ in \eqref{UPhi-l-1}.
	
	{\bf Step 3. $s \geq 2$: To control $\| U_\ell (u, \phi) \|^2_s$ and $\| \Phi_\ell (u, \phi) \|^2_s$ for $\ell \geq 0$.}
	
	We first dominate the quantity $\| U_\ell (u, \phi) \|^2_s$. By \eqref{U-Phi-l},
	\begin{equation*}
		\begin{aligned}
			\| U_\ell (u, \phi) \|^2_s \lesssim \| \partial_t^\ell [\rho (\phi) u_t] \|^2_s + \| \partial_t^\ell [\rho (\phi) u \cdot \nabla u] \|^2_s + \| \nabla \cdot \partial_t^\ell (\nabla \phi \otimes \nabla \phi ) \|^2_s \,.
		\end{aligned}
	\end{equation*}
	Note that, by \eqref{rho-def} and the first inequality in \eqref{Calculus-Inq},
	\begin{equation}\label{rho-s}
		\begin{aligned}
			\| \partial_t^a \rho (\phi) \|_s \lesssim 1 + \sum_{0 \leq j \leq a} \| \partial_t^j \phi \|^2_s \,.
		\end{aligned}
	\end{equation}
	Then, it follows from \eqref{Calculus-Inq} and \eqref{rho-s} that
	\begin{equation*}
		\begin{aligned}
			\| \partial_t^\ell [\rho (\phi) u_t] \|^2_s \lesssim \sum_{a+b=\ell} \| \partial_t^a \rho (\phi) \|^2_s \| \partial_t^{b+1} u \|^2_s \lesssim \sum_{0 \leq j \leq \ell} (1 + \| \partial_t^j \phi \|^4_s) \| \partial_t^{j+1} u \|^2_s \,.
		\end{aligned}
	\end{equation*}
	Similarly, one has
	\begin{equation*}
		\begin{aligned}
			\| \partial_t^\ell [\rho (\phi) u \cdot \nabla u] \|^2_s \lesssim \sum_{0 \leq j \leq \ell} ( 1 + \| \partial_t^j \phi \|^4_s ) \| \partial_t^j u \|^4_{s+1} \,,
		\end{aligned}
	\end{equation*}
	and
	\begin{equation*}
		\begin{aligned}
			\| \nabla \cdot \partial_t^\ell (\nabla \phi \otimes \nabla \phi ) \|^2_s \lesssim \sum_{0 \leq j \leq \ell} \| \nabla \partial_t^{j+1} \phi \|^4_{s+1} \,.
		\end{aligned}
	\end{equation*}
	As a result, there holds
	\begin{equation}\label{Uls-1}
		\begin{aligned}
			\| U_\ell (u, \phi) \|^2_s \lesssim \sum_{0 \leq j \leq \ell} (1 + \| \partial_t^j \phi \|^4_s) \big( \| \partial_t^{j+1} u \|^2_s + \| \partial_t^j u \|^4_{s+1} \big) + \sum_{0 \leq j \leq \ell} \| \nabla \partial_t^{j+1} \phi \|^4_{s+1} \,.
		\end{aligned}
	\end{equation}
	From the estimates \eqref{u-U-bnd} and \eqref{phi-Phi-bnd}, it is deduced that
	\begin{equation}\label{Uls-2}
		\begin{aligned}
			& \| \partial_t^{j+1} u \|^2_s \lesssim \| U_{j+1} (u, \phi) \|^2_{s-2} \,, \quad \quad \ \, \| \partial_t^j u \|^2_{s+1} \lesssim \|U_j (u, \phi) \|^2_{s-1} \,, \\
			& \| \partial_t^j \phi \|^2_s \lesssim \| \Phi_j (u, \phi) \|^2_{s-2} + E_j (t) \,, \quad \| \nabla \partial_t^j \phi \|^2_{s+1} \lesssim \| \Phi_j (u, \phi) \|^2_s + E_j (t) \,.
		\end{aligned}
	\end{equation}
	Then, by \eqref{Uls-1}-\eqref{Uls-2} and \eqref{EUPhi-ls}, we know that the bound of $\| U_\ell (u, \phi) \|^2_s$ in \eqref{UPhi-l-s} holds.
	
	To end the proof, we estimate the quantity $\| \Phi_\ell (u, \phi) \|^2_s$ for $s \geq 2$. Recalling the definition of $\Phi_\ell (u, \phi)$ in \eqref{U-Phi-l}, we know that
	\begin{equation*}
		\begin{aligned}
			\| \Phi_\ell (u, \phi) \|^2_s \lesssim \| \partial_t^{\ell+1} \phi \|^2_s + \| \partial_t^\ell (u \cdot \nabla \phi) \|^2_s + \| \partial_t^\ell f' (\phi) \|^2_s + \| \partial_t^\ell [ \rho' (\phi) |u|^2 ] \|^2_s \,.
		\end{aligned}
	\end{equation*}
	By \eqref{phi-Phi-bnd},
	\begin{equation*}
		\begin{aligned}
			\| \partial_t^{\ell+1} \phi \|^2_s \lesssim \| \Phi_{\ell+1} (u, \phi) \|^2_{s-2} + E_{\ell+1} (t) \,.
		\end{aligned}
	\end{equation*}
	Moreover, it follows from \eqref{Calculus-Inq} and \eqref{u-U-bnd}-\eqref{phi-Phi-bnd} that
	\begin{equation*}
		\begin{aligned}
			\| \partial_t^\ell (u \cdot \nabla \phi) \|^2_s \lesssim & \sum_{a+b=\ell} \| \partial_t^a u \|^2_s \| \nabla \partial_t^b \phi \|^2_s \lesssim \sum_{0 \leq j \leq \ell} \big( \| \partial_t^j u \|^4_s + \| \nabla \partial_t^j \phi \|^4_s \big) \\
			\lesssim & \sum_{0 \leq j \leq \ell} \big( \| U_j (u, \phi) \|^4_{s-2} + \| \Phi_j (u, \phi) \|^4_{s-2} + E_j^2 (t) \big) \,.
		\end{aligned}
	\end{equation*}
	Similarly, it infers that
	\begin{equation*}
		\begin{aligned}
			\| \partial_t^\ell [ \rho' (\phi) |u|^2 ] \|^2_s \lesssim & \sum_{0 \leq j \leq \ell} ( 1 + \| \partial_t^j \phi \|^2_s ) \| \partial_t^j u \|^4_s \\
			\lesssim & \sum_{0 \leq j \leq \ell} \big( 1 + \| \Phi_j (u, \phi) \|^2_{s-2} + E_j (t) \big) \| U_j (u, \phi) \|^4_{s-2} \,.
		\end{aligned}
	\end{equation*}
	Note that $f' (\phi) = \tfrac{1}{\eps^2} (\phi^2 - 1) \phi$. Then \eqref{Calculus-Inq} and \eqref{phi-Phi-bnd} imply that
		\begin{align*}
			\| \partial_t^\ell f' (\phi) \|^2_s \lesssim & \| \partial_t^\ell \phi \|^2_s + \sum_{0 \leq j \leq \ell} \| \partial_t^j \phi \|^6_s \\
			\lesssim & \| \Phi_\ell (u, \phi) \|^2_{s-2} + E_\ell (t) + \sum_{0 \leq j \leq \ell} \big( \| \Phi_j (u, \phi) \|^6_{s-2} + E_j^3 (t) \big) \,.
		\end{align*}
	Consequently, we have
	\begin{equation*}
		\begin{aligned}
			\| \Phi_\ell (u, \phi) \|^2_s \lesssim & \| \Phi_{\ell+1} (u, \phi) \|^2_{s-2} + \sum_{0 \leq j \leq \ell} \big( 1 + \| U_j (u, \phi) \|^4_{s-2} + \| \Phi_j (u, \phi) \|^4_{s-2} \big) \| U_j (u, \phi) \|^2_{s-2} \\
			\lesssim & \sum_{0 \leq j \leq \ell} ( 1 + E_j (t) ) \| U_j (u, \phi) \|^4_{s-2} + \sum_{0 \leq j \leq \ell+1} (1 + E_j^2 (t) ) E_j (t) \,,
		\end{aligned}
	\end{equation*}
	which, combining with \eqref{EUPhi-ls}, concludes the bound of $\| \Phi_\ell (u, \phi) \|^2_s$ in \eqref{UPhi-l-s}. Then the proof of Lemma \ref{Lmm-U-Phi} is finished.
\end{proof}

	
	\section*{Acknowledgments}
	
	The first author N. J. was supported by grants from the National Natural Science Foundation of China under contract No. 11471181 and No. 11731008. The second author Y.-L. L. was supported by grants from the National Natural Science Foundation of China under contract No. 12201220, the Guang Dong Basic and Applied Basic Research Foundation under contract No. 2021A1515110210, and the Science and Technology Program of Guangzhou, China under the contract No. 202201010497.
	
	\bigskip
	
	\bibliography{reference}

\end{document}